\newtheorem{thm}{Theorem}[section]
\newtheorem{lem}[thm]{Lemma}
\newtheorem{prop}[thm]{Proposition}
\newtheorem{cor}[thm]{Corollary}
\theoremstyle{definition}
\newtheorem{definition}[thm]{Definition}
\newtheorem{conj}[thm]{Conjecture}
\newcommand{\blackged}{\hfill$\blacksquare$}
\newcommand{\whiteged}{\hfill$\square$}
\newcommand{\Hom}{\mbox{\rm Hom}}
\newcounter{proofcount}
\renewenvironment{proof}[1][\proofname.]{\par
  \ifnum \theproofcount>0 \pushQED{\whiteged} \else \pushQED{\blackged} \fi%
  \refstepcounter{proofcount}
  \normalfont 
  \trivlist
  \item[\hskip\labelsep
        \itshape
    {\bf\em #1}]\ignorespaces
}{%
  \addtocounter{proofcount}{-1}
  \popQED\endtrivlist
}
\begin{document}

\begin{center}
\textbf{\large{Combinatorial Dimensions: Indecomposability on Certain Local Finite Dimensional Trivial Extension Algebras}}
\end{center}

\begin{center}
\large{Juan Orendain}
\end{center}

\noindent \textit{Abstract}: We study problems related to indecomposability of modules over certain local finite dimensional trivial extension algebras. We do this by purely combinatorial methods. We introduce the concepts of graph of cyclic modules, of combinatorial dimension, and of fundamental combinatorial dimension of a module. We use these concepts to establish, under favorable conditions, criteria for the indecomposability of a module. We present categorified versions of these constructions and we use this categorical framework to establish criteria for the indecomposability of modules of infinite rank.

\tableofcontents

\section{Introduction}

\noindent Techniques for establishing indecomposability of modules range from different areas of ring and module theory such as, the study of rings of endomorphisms [4], the study of divisibility conditions on base rings [13], [14], and more classically, the study of subsets of lattices of submodules. In this paper we present conditions for indecomposability established by purely combinatorial methods. More precisely, we reduce, in favorable cases, the condition of a module being indecomposable to the study of connectiviy properties of certain graphs. These techniques are especially suited for the study of indecomposability over certain finite dimensional trivial extension algebras. We thus apply our results to the problem of existence of indecomposable modules satisfying given conditions over finite dimensional trivial extension algebras. We now sketch the contents of this paper. 

In section 2 we define and study the concept of graph of cyclic modules of a module with respect to an ideal. We use this concept to define the concepts of combinatorial and fundamental combinatorial dimensions of a module. We prove that both, the combinatorial dimension and the fundamental combinatorial dimension of a module, when defined and finite, bound the Krull-Schmidt length of the module. We derive from this, two criteria for the indecomposability of a module. In section 3 we apply results obtained in section 2 to the problem of existence of indecomposable modules with given Goldie dimension over certain local finite dimensional trivial extension algebras. In section 4 we generalize the concepts and constructions introduced in section 3 and we propose two conjectures implying the existence of indecomposable modules of arbitrarily large Goldie dimension over trivial extension algebras studied in section 3. In section 5 we introduce a construction which we regard as a categorification of the concept of graph of cyclic modules. More precisely, we construct a family of concrete functors between given concrete categories, that specialize, under certain conditions, to the concept of graph of cyclic modules. We study functorial properties of these functors, together with categorical properties of their source and target categories. In section 6 we apply the results obtained in section 5 to the study of the problem of existence of modules of infinite Goldie dimension over trivial extension algebras studied in section 3. More generally, we introduce the concept of a rank function and we establish two criteria for the existence of indecomposable modules with infinite rank with respect to a given rank function. Finally, in section 7 we present approximation methods for the computation of graphs of cyclic modules related to methods for computation performed in previous sections.

We assume, unless otherwise stated, that an associative unital ring $R$ is chosen. Thus, unless otherwise stated, the word \textit{module} will always mean a unital left $R$-module, and the word \textit{morphism} will mean a morphism in the category of left $R$-modules, $R$-Mod. We use the usual conventions for concepts related to category theory and module theory. For related terminology in category theory see [1] or [18], and for related terminology in module theory see [20] or [10]. We use the usual conventions for concepts related to graph theory, except for the following exceptions: We will assume all graphs to have exactly one loop on each vertex, we will not assume graphs to have a finite number of vertices, and we will use, throughout, the symbol $\left|\Gamma\right|$, to denote the vertex set of a given graph $\Gamma$. We refer the reader to [5] for all concepts related to graph theory.

\section*{Acknowledgements}

\noindent  The author would like thank professors Jos\'e R\'\i{}os, Hugo Rinc\'on, and Sergio L\'opez-Permouth for their support and kind words, and the annonymous referee for very helpful sugestions. Finally, the author would like to dedicate this paper to the memory of the late professor Francisco Raggi, without whose guidance, encouragement, and friendship, this work would have not been possible.

\section{Combinatorial Dimensions}

\noindent Recall [20] that the Krull-Schmidt length, $KS\ell(M)$, of a module $M$, is defined as the supremum of cardinalities of direct sum decompositions of $M$. Thus, the Krull-Shmidt length, $KS\ell(M)$, of a module $M$, is equal to $1$ if and only if $M$ is indecomposable. Observe also that if the Krull-Schmidt length, $KS\ell(M)$, of a module $M$, is finite, then $M$ admits finite indecomposable direct sum decompositions. In this first section we introduce the concepts of combinatorial dimension and fundamental combinatorial dimension of a module. We prove that both the combinatorial dimension and the fundamental combinatorial dimension of a module $M$, when defined and finite, are upper bounds for the Krull-Schmidt length, $KS\ell(M)$, of $M$, thus providing criteria for the indecomposability of $M$. We begin with the following definition. 

\begin{definition}
Let $I$ be an ideal, $M$ a module, and $\Sigma$ a subset of $M\setminus\left\{0\right\}$. We define the $I$-graph of cyclic modules of $\Sigma$ in $M$, $\Gamma_I(M,\Sigma)$, as follows:
\begin{enumerate}
\item The set of vertices $\left|\Gamma_I(M,\Sigma)\right|$ of $\Gamma_I(M,\Sigma)$ will be the set of all cyclic submodules of $M$, generated by elements of $\Sigma$.
\item If $a,b\in \Sigma$, then $Ra$ and $Rb$ will be adjacent in $\Gamma_I(M,\Sigma)$ if 

\[Ia\cap Ib\neq\left\{0\right\}\]

\end{enumerate}
\noindent Moreover, if $\Sigma=M\setminus I M$, we will write $\Gamma_I(M)$ for $\Gamma_I(M,\Sigma)$, and in this case we will call $\Gamma_I(M)$ the $I$-graph of cyclic modules of $M$.
\end{definition}

\noindent Thus, in the case where $I=\left\{0\right\}$, the $I$-graph of cyclic modules of $\Sigma$ in $M$, $\Gamma_I(M,\Sigma)$, is equal to the discrete graph generated by the set of all cyclic submodules of $M$, generated by elements of $\Sigma$, that is, $\Gamma_I(M,\Sigma)$ has the set of all cyclic submodules of $M$, generated by elements of $\Sigma$, as set of vertices, $\left|\Gamma_I(M,\Sigma)\right|$, and its only edges are a loop on each of its vertices. If $R$ is a field, then $\Gamma_R(M,\Sigma)$ is again the discrete graph generated by the set of all one dimensional subspaces of $M$, generated by elements of $\Sigma$. If now $I=R$ and $M$ is uniform, then $\Gamma_I(M,\Sigma)$ is the complete graph generated by all cyclic submodules generated by elements of $\Sigma$. In particular, graphs

\[\Gamma_{\left\{0\right\}}(\mathbb{Z},\mathbb{Z}\setminus\left\{0\right\}), \ \Gamma_{\mathbb{F}_2}(\mathbb{F}_2^2,\mathbb{F}_2^2\setminus\left\{0\right\}), \ \Gamma_{\mathbb{Z}}(\mathbb{Z},\mathbb{Z}\setminus\left\{0\right\})\]

\noindent are: The discrete graph generated by a countable set of vertices, the discrete graph generated by three vertices, and the complete graph generated by a countable set of vertices respectively.

Given an ideal $I$ and a module $M$ we will denote by $ann_IM$ the subset of $M$ annihilated by $I$, that is, $ann_IM$ will denote the set of all $a\in M$ such that $Ia=\left\{0\right\}$. Further, we will denote by $ann_I^*M$ the subset of $M$ pointwise annihilated by $I$, that is $ann_I^*M$ will denote the set of all $a\in M$ such that there exists $r\in I\setminus\left\{0\right\}$ such that $ra=0$. It is easily seen that in general, $ann_IM\subseteq ann_I^*M$ whenever $I\neq\left\{0\right\}$, and that $I^2M=\left\{0\right\}$ if and only if $I M\subseteq ann_IM$. We will say that the ideal $I$ is a decomposition ideal for $M$ if the opposites of these contentions hold, that is, we will say that $I$ is a decomposition ideal of $M$ if the equalities 
\[I M=ann_IM=ann_I^*M\]
hold. We will denote by $\mathfrak{D}(I)$ the domain of decomposition of the ideal $I$, that is, $\mathfrak{D}(I)$ will denote the class of all modules $M$ such that $I$ is a decomposition ideal of $M$. Further, we will denote by $\mathfrak{D}^{-1}(M)$ the codomain of decomposition of $M$, that is, $\mathfrak{D}^{-1}(M)$ will denote the set of all ideals $I$ such that $M\in\mathfrak{D}(I)$. Given an ideal $I\in\mathfrak{D}^{-1}(M)$, we define the $I$-combinatorial dimension of $M$, ($cdim_IM$), as the number of connected components of the $I$-graph of cyclic modules, $\Gamma_I(M)$, of $M$. If $\mathfrak{D}^{-1}(M)\neq\emptyset$, we define then, the combinatorial dimension ($cdimM$) of $M$ as the minimum of $I$-combinatorial dimensions of $M$, with $I\in\mathfrak{D}^{-1}(M)$. The following theorem is the main result of this section.

\begin{thm}
Let $M$ be a module. If $cdimM$ exists and is finite, then 
\[KS\ell(M)\leq cdimM\]
\end{thm}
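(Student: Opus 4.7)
The plan is to show that for every finite direct sum decomposition $M = M_1 \oplus \cdots \oplus M_n$ of $M$ into nonzero summands and every $I \in \mathfrak{D}^{-1}(M)$, one has $n \leq cdim_IM$; taking the infimum over such $I$ then gives $n \leq cdimM$, and taking the supremum over $n$ gives $KS\ell(M) \leq cdimM$. The finiteness hypothesis on $cdimM$ rules out infinite direct sum decompositions as well, since any such decomposition yields finite ones of arbitrary length by grouping.

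First I would verify that the decomposition ideal property descends to direct summands: if $I \in \mathfrak{D}^{-1}(M)$ and $M = \bigoplus_k M_k$, then $I \in \mathfrak{D}^{-1}(M_k)$ for every $k$. The identities $IM = \bigoplus_k IM_k$ and $ann_IM = \bigoplus_k ann_IM_k$ imply $IM_k = ann_IM_k$ by uniqueness of components, while $ann_I^*M_k \subseteq ann_I^*M = IM = \bigoplus_k IM_k$ intersected with $M_k$ yields $ann_I^*M_k \subseteq IM_k$. As a consequence, whenever $M_k \neq 0$ one has $M_k \setminus IM_k \neq \emptyset$: otherwise $M_k = IM_k = ann_IM_k$ would force $M_k = IM_k = 0$. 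So I may pick $a_i \in M_i \setminus IM_i$, producing vertices $Ra_1,\ldots,Ra_n$ of $\Gamma_I(M)$.

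The heart of the argument is to show that $Ra_1,\ldots,Ra_n$ lie in pairwise distinct connected components of $\Gamma_I(M)$. Fix $i \neq j$ and suppose for contradiction that a path $Ra_i = Rc^{(0)}, Rc^{(1)}, \ldots, Rc^{(m)} = Ra_j$ exists, with $c^{(0)} = a_i$, $c^{(m)} = a_j$, and each $c^{(k)} = \sum_\ell c^{(k)}_\ell$ decomposed according to $M = \bigoplus_\ell M_\ell$. I would show by induction on $k$ that $c^{(k)}_j \in IM_j$. The base case $(a_i)_j = 0 \in IM_j$ is immediate. For the step, adjacency $Rc^{(k)} \sim Rc^{(k+1)}$ yields $r,s \in I$ with $rc^{(k)} = sc^{(k+1)} \neq 0$; projecting onto $M_j$ and using $c^{(k)}_j \in IM_j = ann_IM_j$ gives $sc^{(k+1)}_j = rc^{(k)}_j = 0$, while $s \neq 0$ since otherwise $rc^{(k)} = 0$. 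Hence $c^{(k+1)}_j \in ann_I^*M_j = IM_j$, closing the induction. Setting $k = m$ yields $a_j \in IM_j$, contradicting the choice of $a_j$.

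The principal obstacle is precisely this inductive step, which is exactly where both halves of the decomposition ideal hypothesis enter: the identity $IM_j = ann_IM_j$ is what allows $rc^{(k)}_j$ to vanish, and the identity $ann_I^*M_j = IM_j$ is what allows the conclusion $c^{(k+1)}_j \in IM_j$ from annihilation by a single nonzero element of $I$. Without either equality the propagation along the path fails.
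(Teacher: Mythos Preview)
Your proof is correct, and its technical core---propagating information along a path in $\Gamma_I(M)$ using both equalities $IM_j = ann_IM_j$ and $ann_I^*M_j = IM_j$---is exactly the mechanism the paper uses in its Lemmas~2.5--2.6. The difference is organizational. The paper first proves that a path starting in a summand $A$ can be lifted to a path inside $\Gamma_I(A)$ (Lemma~2.6), deduces that connected components of $\Gamma_I(A)$ do not merge in $\Gamma_I(M)$ (Corollary~2.8), uses this to show that connectedness of $\Gamma_I(M)$ forces indecomposability (Lemma~2.9), and then runs an induction on $cdim_IM$ to finish. You bypass all of that scaffolding: you pick one vertex in each summand and show directly, by induction along the path, that no two of them can lie in the same component. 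This is cleaner and shorter; it also makes the role of the two halves of the decomposition-ideal hypothesis more transparent, exactly as you note in your final paragraph. The paper's route, on the other hand, yields the intermediate results (2.6)--(2.9) as standalone statements, and the paper reuses~(2.8) later in the proof of Theorem~2.11 on fundamental combinatorial dimension.
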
 

\noindent As a direct consequence of (2.2) we have the following corollary.

\begin{cor}
Let $M$ be a module
\begin{enumerate}
\item If $cdimM$ exists and is finite, then $M$ admits finite indecomposable direct sum decompositions.
\item If $cdimM=1$ then $M$ is indecomposable.
\end{enumerate}
\end{cor}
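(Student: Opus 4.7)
The plan is to derive both parts of the corollary directly from Theorem 2.2 together with the two elementary observations made at the start of Section 2, namely that $KS\ell(M)=1$ exactly when $M$ is indecomposable, and that finiteness of $KS\ell(M)$ forces $M$ to admit finite indecomposable direct sum decompositions. No new machinery should be needed; the only care required is in the boundary cases, where one must verify that the corresponding graph $\Gamma_I(M)$ really does encode nontriviality of $M$.

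For part $(1)$, I would simply invoke Theorem 2.2 to conclude that $KS\ell(M)\leq cdim\,M<\infty$, and then apply the observation that a module with finite Krull--Schmidt length admits finite indecomposable direct sum decompositions. This step is purely a restatement and requires no extra argument.

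For part $(2)$, the plan is again to feed $cdim\,M=1$ into Theorem 2.2, obtaining $KS\ell(M)\leq 1$, and then to appeal to the equivalence $KS\ell(M)=1\Longleftrightarrow M$ indecomposable. The only issue to address is the possibility that $KS\ell(M)=0$, which would correspond to $M=0$. To rule this out, I would use the definition of $\Gamma_I(M)$: if $cdim\,M=1$ then for some decomposition ideal $I\in\mathfrak{D}^{-1}(M)$ the graph $\Gamma_I(M)$ has exactly one connected component, hence at least one vertex, so $M\setminus IM\neq\emptyset$; in particular $M\neq 0$, and therefore $KS\ell(M)\geq 1$.

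The main obstacle here is essentially bookkeeping rather than mathematics: one must be careful that $cdim\,M$ is counting connected components of a graph that can in principle be empty, so the step $cdim\,M=1\Rightarrow M\neq 0$ should be pointed out explicitly. Beyond this, both parts are formal consequences of Theorem 2.2 and the elementary facts recalled at the opening of Section 2, which is presumably why the author labels the corollary as a direct consequence.
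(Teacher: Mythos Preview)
Your proposal is correct and matches the paper's intended approach: the paper gives no proof at all, simply stating that the corollary is ``a direct consequence of (2.2)'', and your argument fills in exactly those details. Your explicit check that $cdim\,M=1$ forces $M\neq 0$ (via $\Gamma_I(M)$ having a vertex) is a careful point the paper leaves implicit.
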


\noindent We now reduce the proof of (2.2) to the proof of a series of lemmas.

\noindent Given an ideal $I$, a module $M$ and a submodule $N$ of $M$, we will say that $N$ is $I$-pure in $M$ ($N\leq_IM$) if the equality
\[IN=IM\cap N\]
holds. If further, $N$ is $I$-pure in $M$ for every ideal $I$, we will say that $N$ is pure in $M$. Direct summands are examples of pure submodules.

\begin{lem}
Let $I$ be an ideal. Let $M$ and $N$ be modules. If $N\leq_IM$, then
\begin{enumerate}
\item If $M\in\mathfrak{D}(I)$, then $N\in\mathfrak{D}(I)$.
\item $\Gamma_I(N)$ is a complete subgraph of $\Gamma_I(M)$.
\end{enumerate}
\end{lem}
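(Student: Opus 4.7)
The plan is to verify each statement directly from the definitions, with the hypothesis $N \leq_I M$ (i.e.\ $IN = IM \cap N$) used at exactly one step in each part.

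For part (1), the starting observation is that both $ann_I$ and $ann_I^*$ are defined by a pointwise condition on module elements, so for any submodule $N \leq M$ the identities
\[
ann_I N = ann_I M \cap N, \qquad ann_I^* N = ann_I^* M \cap N
\]
hold automatically. Intersecting the hypothesis $IM = ann_I M = ann_I^* M$ with $N$ and then applying $N \leq_I M$, I would chain
\[
IN \;=\; IM \cap N \;=\; ann_I M \cap N \;=\; ann_I N \;=\; ann_I^* M \cap N \;=\; ann_I^* N,
\]
which is exactly the statement $N \in \mathfrak{D}(I)$.

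For part (2), I would first verify the inclusion $|\Gamma_I(N)| \subseteq |\Gamma_I(M)|$. If $a \in N \setminus IN$, the purity inclusion $IM \cap N \subseteq IN$ forces $a \notin IM$, so $Ra$ is a vertex of $\Gamma_I(M)$. Distinct vertices of $\Gamma_I(N)$ remain distinct in $\Gamma_I(M)$ because the cyclic submodule generated by an element of $N$ is the same set whether computed in $N$ or in $M$. The adjacency condition $Ia \cap Ib \neq \{0\}$ of Definition 2.1 is intrinsic to $R$ and to the generators $a,b$, not to the ambient module, so two given vertices are adjacent in $\Gamma_I(N)$ if and only if they are adjacent in $\Gamma_I(M)$. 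Together with the standing convention that every vertex carries a unique loop, this shows that $\Gamma_I(N)$ is precisely the full (induced) subgraph of $\Gamma_I(M)$ on the vertex set $|\Gamma_I(N)|$, which is the reading of ``complete subgraph'' required here.

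I do not anticipate a genuine obstacle: the proof is essentially bookkeeping. The only care required is to locate cleanly the two uses of $I$-purity, namely in the identification $IM \cap N = IN$ in part (1) and in the implication $a \in N \setminus IN \Rightarrow a \notin IM$ in part (2); everything else follows from pointwise definitions and the fact that the adjacency condition does not see the ambient module.
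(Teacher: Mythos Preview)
Your proposal is correct and follows essentially the same route as the paper. In part (1) the paper argues the two containments $IN\subseteq ann_IN$ and $ann_I^*N=IM\cap N=IN$ separately while you run a single chain of equalities, and in part (2) both you and the paper observe that $N\setminus IN\subseteq M\setminus IM$ via $I$-purity and that the adjacency relation is intrinsic; these are the same arguments with only cosmetic differences in presentation.
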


\begin{proof}
Suppose that $M\in\mathfrak{D}(I)$. From the fact that $IM\subseteq ann_IM$ it follows that $IN\subseteq ann_IN$. Now, $ann^*_IN=ann^*_IM\cap N$, and since $M\in\mathfrak{D}(I)$, this is equal to $IM\cap N$. This in turn is equal to $IN$ since $N\leq_IM$. It follows that $IN=ann_IN=ann^*_IN$. We conclude that $N\in\mathfrak{D}(I)$. Now, from the fact that $N\leq_IM$ it easily follows that $N\setminus IN\subseteq M\setminus IM$. It follows that $\left|\Gamma_I(N)\right|\subseteq \left|\Gamma_I(M)\right|$. It is immediate that the relation of adjacency in $\Gamma_I(N)$ is the restriction of the relation of adjacency in $\Gamma_I(M)$. We conclude that $\Gamma_I(N)$ is a complete subgraph of $\Gamma_I(M)$.
\end{proof}

\noindent It follows in particular, from (2.4) that the domain of decomposition, $\mathfrak{D}(I)$, of any ideal $I$, is closed under the operation of taking direct summands, and that for any ideal $I$ and module $M$, the $I$-graph of cyclic modules, $\Gamma_I(N)$, of any direct summand $N$ of $M$, is a complete subgraph of the $I$-graph of cyclic modules, $\Gamma_I(M)$, of $M$.

\begin{lem}
Let $M$ be a module. Let $M=A\oplus B$ be a direct sum decomposition of $M$. Let $I\in\mathfrak{D}^{-1}(M)$ and $Rx$ be a vertex of $\Gamma_I(M)$. If there exists a vertex, $Ra$, of $\Gamma_I(A)$ such that $Rx$ and $Ra$ are adjacent in $\Gamma_I(M)$, then $x\in A\oplus IB$. 
\end{lem}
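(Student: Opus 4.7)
The plan is to extract information from the adjacency condition by decomposing $x$ along $A \oplus B$, and then use the fact that $I$ is a decomposition ideal for $M$ to promote a pointwise annihilation condition into membership in $IM$.

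First I would write $x = x_A + x_B$ uniquely with $x_A \in A$ and $x_B \in B$. The hypothesis that $Rx$ and $Ra$ are adjacent in $\Gamma_I(M)$ gives, by definition, elements $r, s \in I$ with $rx = sa \neq 0$. Since $a \in A$, also $sa \in A$. Expanding $rx = rx_A + rx_B$ and comparing $A$- and $B$-components using the direct sum decomposition, one obtains $rx_A = sa$ and $rx_B = 0$. The inequality $sa \neq 0$ forces $r \neq 0$, so $r \in I \setminus \{0\}$ witnesses that $x_B \in ann_I^*M$.

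Now I would invoke the assumption $I \in \mathfrak{D}^{-1}(M)$, which yields the chain of equalities $IM = ann_I M = ann_I^* M$. In particular $x_B \in ann_I^* M = IM$. Since $B$ is a direct summand of $M$, it is pure in $M$, and in particular $I$-pure, so $IM \cap B = IB$. Combining, $x_B \in IM \cap B = IB$, and therefore $x = x_A + x_B \in A \oplus IB$, as desired.

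The argument is essentially routine once one correctly identifies that the key use of the decomposition ideal hypothesis is to translate the local annihilation fact $rx_B = 0$ (with $r \in I\setminus\{0\}$) into the global statement $x_B \in IM$. The only step that requires a little care is the bookkeeping involved in showing $IM \cap B = IB$, and this is handled by the remark after Lemma 2.4 that direct summands are pure.
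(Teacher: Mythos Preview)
Your proof is correct and follows essentially the same approach as the paper's: decompose $x$ along $A\oplus B$, use the adjacency to obtain $r\in I\setminus\{0\}$ with $r x_B=0$, and then invoke the decomposition ideal hypothesis to conclude $x_B\in IB$. The only minor difference is that you make the last step explicit via $I$-purity of the direct summand $B$ (giving $IM\cap B=IB$), whereas the paper simply asserts $\beta\in IB$ without spelling out the justification; your version is if anything clearer. (One small quibble: the remark that direct summands are pure appears just \emph{before} Lemma~2.4, not after it.)
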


\begin{proof}
Suppose $Rx$ and $Ra$ are adjacent in $\Gamma_I(M)$. Then there exist $r,r'\in I\setminus\left\{0\right\}$ such that $rx=r'a$. If we let $x=\alpha+\beta$ with $\alpha\in A$ and $\beta\in B$, then $rx=r\alpha +r\beta=r'a$. It follows that $r\beta=r'a-r\alpha$, from which it follows that $r\beta\in A\cap B=\left\{0\right\}$. Thus $\beta\in ann^*_IM$. It follows that $\beta\in IB$. This concludes the proof.
\end{proof}

\begin{lem}
Let $M$ be a module. Let $M=A\oplus B$ be a direct sum decomposition of $M$. Let $I\in\mathfrak{D}^{-1}(M)$. If $Rx_1,...,Rx_n$ is a path in $\Gamma_I(M)$ such that $x_1\in A$, then there exists a path $Ra_1,...,Ra_n$ in $\Gamma_I(A)$ such that $x_1=a_1$ and such that $Ix_i=Ia_i$ for all $1\leq i\leq n$.
\end{lem}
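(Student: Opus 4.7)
The plan is to prove the statement by induction on $n$. For the base case $n=1$, I would set $a_1 := x_1$. Since direct summands are $I$-pure, $IA = IM \cap A$; and since $Rx_1$ is a vertex of $\Gamma_I(M)$ we have $x_1 \in A \setminus IM$, hence $x_1 \in A \setminus IA$, so $Ra_1$ is indeed a vertex of $\Gamma_I(A)$. The conditions $x_1 = a_1$ and $Ix_1 = Ia_1$ then hold trivially.

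For the inductive step, suppose I have produced a path $Ra_1, \ldots, Ra_k$ in $\Gamma_I(A)$ with the required properties. The inductive hypothesis $Ix_k = Ia_k$ combined with the adjacency $Ix_k \cap Ix_{k+1} \neq \{0\}$ in $\Gamma_I(M)$ gives $Ia_k \cap Ix_{k+1} \neq \{0\}$, so $Ra_k$ and $Rx_{k+1}$ are adjacent in $\Gamma_I(M)$. Lemma 2.5 then yields a decomposition $x_{k+1} = \alpha + \beta$ with $\alpha \in A$ and $\beta \in IB$, and I would set $a_{k+1} := \alpha$. Three verifications remain. First, since $B$ is a direct summand of $M \in \mathfrak{D}(I)$, Lemma 2.4 gives $B \in \mathfrak{D}(I)$, so $IB \subseteq ann_I B$ and hence $I\beta = \{0\}$, yielding $Ix_{k+1} = I\alpha = Ia_{k+1}$. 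Second, if we had $a_{k+1} \in IA$, then $x_{k+1} = \alpha + \beta \in IA + IB \subseteq IM$, contradicting that $Rx_{k+1}$ is a vertex of $\Gamma_I(M)$; thus $Ra_{k+1}$ is a vertex of $\Gamma_I(A)$. Third, $Ia_k \cap Ia_{k+1} = Ix_k \cap Ix_{k+1} \neq \{0\}$, so $Ra_k$ and $Ra_{k+1}$ are adjacent in $\Gamma_I(A)$, completing the induction.

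The main obstacle I anticipate is the verification that $a_{k+1} \notin IA$, which is what ensures the constructed path stays inside the vertex set of $\Gamma_I(A)$ at every step rather than collapsing to a zero cyclic submodule. This delicate point depends on the decomposition ideal hypothesis being inherited by the direct summand $B$, which is precisely the content of Lemma 2.4 together with the purity of direct summands. The bookkeeping invariant $Ix_i = Ia_i$ is essential: without it one cannot transport the adjacency relation from $\Gamma_I(M)$ to $\Gamma_I(A)$ when constructing the next vertex in the path.
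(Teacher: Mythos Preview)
Your proposal is correct and follows essentially the same inductive argument as the paper's proof: both set $a_{k+1}$ to be the $A$-component of $x_{k+1}$, invoke Lemma~2.5 to land in $A\oplus IB$, and use the decomposition-ideal condition on $B$ to get $I\beta=\{0\}$ and hence $Ix_{k+1}=Ia_{k+1}$. If anything, your version is slightly more careful than the paper's, which does not explicitly verify that $a_{k+1}\notin IA$ (your second verification) and simply asserts the adjacency at the end.
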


\begin{proof}
We do induction on $n$. The case in which $n=1$ is trivial. Suppose now that the result is true for $n$. Let $Rx_1,...,Rx_{n+1}$ be a path in $\Gamma_I(M)$ such that $x_1\in A$. By induction hypothesis there exists a path $Ra_1,...,Ra_n$ in $\Gamma_I(A)$ such that $a_1=x_1$ and such that $Ix_i=Ia_i$ for all $1\leq i\leq n$. From the fact that $Ia_n=Ix_n$ together with the fact that $Rx_n$ and $Rx_{n+1}$ are adjacent in $\Gamma_I(M)$ it follows that $Ra_n$ and $Rx_{n+1}$ are adjacent in $\Gamma_I(M)$. It follows, from (2.5), that $x_{n+1}\in A\oplus IB$. Thus, if $x_{n+1}=\alpha+\beta$ with $\alpha\in A$ and $\beta\in B$, then $\beta\in IB=ann_I^*B$, from which it follows that $Ix_{n+1}=I(\alpha+\beta)=I\alpha$. From this it follows that if we make $a_{n+1}=\alpha$, then $Ia_{n+1}=Ix_{n+1}$ and $Ra_n$ and $Ra_{n+1}$ are adjacent in $\Gamma_I(M)$. This concludes the proof.  
\end{proof}

\begin{cor}
Let $M$ be a module. Let $M=A\oplus B$ be a direct sum decomposition of $M$. Let $I\in\mathfrak{D}^{-1}(M)$ and $Rx$ be a vertex of $\Gamma_I(M)$. If there exists a vertex $Ra$ of $\Gamma_I(A)$ such that $Rx$ and $Ra$ are in the same connected component of $\Gamma_I(M)$, then $x\in A\oplus IB$.
\end{cor}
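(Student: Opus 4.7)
The plan is to reduce this corollary to Lemma 2.6 and Lemma 2.5 by exploiting the fact that ``same connected component'' means connected by a path.

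Since $Ra$ and $Rx$ lie in the same connected component of $\Gamma_I(M)$, I will fix a path $Rx_1,\ldots,Rx_n$ in $\Gamma_I(M)$ with $Rx_1=Ra$ and $Rx_n=Rx$. Without loss of generality I may assume $x_1=a$, so in particular $x_1\in A$. The case $n=1$ is immediate: then $Rx=Ra\subseteq A\subseteq A\oplus IB$.

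For the case $n\geq 2$ I peel off the last edge of the path and reduce to Lemma 2.5. First I apply Lemma 2.6 to the truncated path $Rx_1,\ldots,Rx_{n-1}$, which is a path in $\Gamma_I(M)$ whose initial generator lies in $A$. This produces a path $Ra_1,\ldots,Ra_{n-1}$ inside $\Gamma_I(A)$ with $a_1=x_1$ and $Ia_i=Ix_i$ for every $1\leq i\leq n-1$. In particular $Ra_{n-1}$ is a vertex of $\Gamma_I(A)$ and $Ia_{n-1}=Ix_{n-1}$.

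Now I use the last edge: since $Rx_{n-1}$ and $Rx_n=Rx$ are adjacent in $\Gamma_I(M)$, there exist $r,r'\in I\setminus\{0\}$ with $rx_{n-1}=r'x_n$; replacing $x_{n-1}$ by a suitable element witnessing $Ia_{n-1}=Ix_{n-1}$ (equivalently, observing that $Ia_{n-1}\cap Ix\supseteq Irx_{n-1}\cap Ir'x_n$ modulo the identification of principal $I$-submodules) shows that $Ia_{n-1}\cap Ix\neq\{0\}$, so $Ra_{n-1}$ is adjacent to $Rx$ in $\Gamma_I(M)$. An application of Lemma 2.5 with the vertex $Ra_{n-1}$ of $\Gamma_I(A)$ then yields $x\in A\oplus IB$.

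The main subtlety is not the induction (Lemma 2.6 already carried it) but the translation step at the final edge: namely verifying that replacing $x_{n-1}$ with $a_{n-1}$, while only preserving the $I$-submodule it generates rather than the element itself, is enough to preserve adjacency with $Rx$. This is the only place care is needed, and it is handled directly by the definition of adjacency in $\Gamma_I(M)$, which depends only on the products $Iz$ attached to the vertices $Rz$, not on the specific generators.
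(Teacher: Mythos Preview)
Your proof is correct and follows essentially the same route as the paper's, combining Lemma~2.6 with Lemma~2.5. The paper's version is marginally cleaner: it applies Lemma~2.6 to the \emph{full} path $Rx_1,\ldots,Rx_n$, obtaining $Ia_n=Ix_n=Ix$ directly, so that the vertex $Ra_n$ of $\Gamma_I(A)$ is adjacent to $Rx$ (since $Ia_n\cap Ix=Ix\neq\{0\}$) and Lemma~2.5 applies immediately, eliminating the edge-translation step you flag as the main subtlety.
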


\begin{proof}
Suppose $Rx$ and $Ra$ are in the same connected component of $\Gamma_I(M)$. Then there exists a path $Rx_1,...,Rx_n$ such that $x_1=a$ and such that $x_n=x$. From (2.6) it follows that there exists a path $Ra_1,...,Ra_n$ in $\Gamma_I(A)$ such that $a_1=a$ and such that $Ix_i=Ia_i$ for all $1\leq i\leq n$. In particular $Ix_n=Ia_n$. From this and from (2.5) it follows that $x\in A\oplus IB$. This concludes the proof. 
\end{proof}

\begin{cor}
Let $M$ be a module. Let $M=A\oplus B$ be a direct sum decomposition of $M$. Let $I\in\mathfrak{D}^{-1}(M)$. If $Ra,Rb$ are two vertices of $\Gamma_I(A)$ then $Ra$ and $Rb$ are in the same connected component of $\Gamma_I(M)$ if and only of $Ra$ and $Rb$ are in the same connected component of $\Gamma_I(A)$.
\end{cor}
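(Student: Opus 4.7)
The plan is to split the biconditional into its two implications. The forward direction is immediate: since $A$ is a direct summand of $M$ it is pure in $M$, and in particular $I$-pure, so Lemma 2.4 guarantees that $\Gamma_I(A)$ is a complete subgraph of $\Gamma_I(M)$. Any path in $\Gamma_I(A)$ joining $Ra$ with $Rb$ is therefore automatically a path in $\Gamma_I(M)$.

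For the reverse direction, suppose $Ra$ and $Rb$ lie in the same connected component of $\Gamma_I(M)$ and fix a path $Rx_1,\ldots,Rx_n$ in $\Gamma_I(M)$ with $x_1=a$ and $x_n=b$. Since $x_1=a\in A$, Lemma 2.6 produces a path $Ra_1,\ldots,Ra_n$ in $\Gamma_I(A)$ with $a_1=a$ and $Ix_i=Ia_i$ for every $1\leq i\leq n$; in particular $Ia_n=Ib$. It then remains only to connect $Ra_n$ to $Rb$ inside $\Gamma_I(A)$.

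To do this, observe that $Rb$ is a vertex of $\Gamma_I(A)$, so $b\in A\setminus IA$. By Lemma 2.4, $A\in\mathfrak{D}(I)$, whence $IA=ann_IA$, forcing $Ib\neq\{0\}$. Consequently $Ia_n\cap Ib=Ib\neq\{0\}$, which makes $Ra_n$ and $Rb$ adjacent in $\Gamma_I(M)$ and, by the completeness of the subgraph $\Gamma_I(A)$, adjacent in $\Gamma_I(A)$ as well. Appending this final edge to $Ra_1,\ldots,Ra_n$ yields a walk in $\Gamma_I(A)$ from $Ra$ to $Rb$, completing the argument.

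I do not foresee any substantial obstacle; the only delicate point is the verification that $Ib\neq\{0\}$, for which the hypothesis $I\in\mathfrak{D}^{-1}(M)$, transported to $A$ via Lemma 2.4, is indispensable. Without it one could have $Ib=\{0\}$ and the endpoint $Ra_n$ of the lifted path would not be visibly linkable to $Rb$ inside $\Gamma_I(A)$.
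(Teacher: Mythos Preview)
Your argument is correct and follows the paper's approach almost verbatim: both proofs invoke Lemma~2.6 to lift the path in $\Gamma_I(M)$ to a path $Ra_1,\ldots,Ra_n$ in $\Gamma_I(A)$ with $Ia_n=Ib$, and then use this equality to link up with $Rb$. The only cosmetic difference is that the paper replaces the final vertex (writing $Ra_1,\ldots,Ra_{n-1},Rb$, using that $Ia_{n-1}\cap Ia_n=Ia_{n-1}\cap Ib\neq\{0\}$), whereas you append one extra edge from $Ra_n$ to $Rb$; your explicit check that $Ib\neq\{0\}$ via $A\in\mathfrak{D}(I)$ is a nice touch that the paper leaves implicit.
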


\begin{proof}
Suppose $Ra$ and $Rb$ are in the same connected component of $\Gamma_I(M)$. Then there exists a path $Rx_1,...,Rx_n$ in $\Gamma_I(M)$ such that $x_1=a$ and such that $x_n=b$. From (2.6) it follows that there exists a path $Ra_1,...,Ra_n$ in $\Gamma_I(A)$ such that $a_1=x_1=a$ and such that $Ia_i=Ix_i$ for every $1\leq i\leq n$. In particular $Ia_n=Ib$. Thus the path $Ra_1,...,Ra_{n-1},Rb$ is a path in $\Gamma_I(A)$ between $Ra$ and $Rb$. We conclude that $Ra$ and $Rb$ are in the same connected component of $\Gamma_I(A)$. This concludes the proof. 
\end{proof}

\begin{lem}
Let $M$ be a module. Let $I\in\mathfrak{D}^{-1}(M)$. If $\Gamma_I(M)$ is connected, then $M$ is indecomposable. 
\end{lem}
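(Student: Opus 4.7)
The plan is to argue by contradiction, assuming $M=A\oplus B$ is a nontrivial direct sum decomposition with $A,B\neq 0$, and then producing two vertices of $\Gamma_I(M)$ that cannot lie in the same connected component.

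First I would observe that direct summands are $I$-pure: if $M=A\oplus B$, then $IM=IA\oplus IB$, so $IM\cap A=IA$, and hence $A\leq_I M$; similarly $B\leq_I M$. By Lemma 2.4 this gives $A,B\in\mathfrak{D}(I)$ and realises $\Gamma_I(A)$ and $\Gamma_I(B)$ as complete subgraphs of $\Gamma_I(M)$.

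Next I would check that both subgraphs are nonempty. Since $A\in\mathfrak{D}(I)$, one has $IA=ann_IA$, so if $IA=A$ then $A=ann_IA$, i.e.\ $IA=0$, forcing $A=0$; as $A\neq 0$ we obtain $A\setminus IA\neq\emptyset$, and symmetrically $B\setminus IB\neq\emptyset$. Pick $a\in A\setminus IA$ and $b\in B\setminus IB$, giving vertices $Ra$ of $\Gamma_I(A)$ and $Rb$ of $\Gamma_I(B)$, both sitting in $\Gamma_I(M)$.

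Now I invoke the connectedness hypothesis: $Ra$ and $Rb$ lie in the same connected component of $\Gamma_I(M)$, so Corollary 2.7 (applied with $x=b$) yields $b\in A\oplus IB$. Writing $b=\alpha+\beta'$ with $\alpha\in A$ and $\beta'\in IB\subseteq B$, uniqueness of the decomposition $M=A\oplus B$ forces $\alpha=0$, hence $b=\beta'\in IB$, contradicting the choice of $b$. Therefore no such decomposition exists and $M$ is indecomposable.

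The only step that requires any care is the argument that $\Gamma_I(A)$ and $\Gamma_I(B)$ are nonempty, since without that one cannot even produce the candidate vertices to which Corollary 2.7 will be applied; this is where the decomposition property of $I$ is essential. Once those vertices are in hand, the rest is a direct appeal to Corollary 2.7 and to the uniqueness of the direct sum decomposition.
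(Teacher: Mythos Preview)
Your proof is correct and follows essentially the same route as the paper: both arguments fix a nontrivial decomposition $M=A\oplus B$, use $I$-purity of direct summands together with Lemma~2.4 to see $\Gamma_I(A)$ sits inside $\Gamma_I(M)$ with nonempty vertex set, and then apply Corollary~2.7 to force elements into $A\oplus IB$. The only cosmetic difference is that the paper applies Corollary~2.7 to \emph{every} $x\in M\setminus IM$, deduces $B=IB$, and then kills $B$ via $I^2B=\{0\}$, whereas you pick a single $b\in B\setminus IB$ and obtain the contradiction $b\in IB$ directly; these are equivalent endings of the same argument.
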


\begin{proof}
Suppose that $I\in\mathfrak{D}^{-1}(M)$ and that $\Gamma_I(M)$ is connected. Let $M=A\oplus B$ be a direct sum decomposition of $M$. Suppose $A\neq \left\{0\right\}$. Then, since $IA$ is superfluous in $A$ and thus it is superfluous in $M$, it follows that $\left|\Gamma_I(A)\right|\neq\emptyset$. Let $x\in M\setminus IM$. From the fact that $\Gamma_I(M)$ is connected and from the fact that $\left|\Gamma_I(A)\right|\neq\emptyset$ it follows  , from (2.7), that $x\in A\oplus IB$. It follows that $B=IB$. From this and from the fact that $I^2B=\left\{0\right\}$ we conclude that $B=\left\{0\right\}$. This concludes the proof. 
\end{proof}

\noindent\textbf{\textit{Proof of 2.2}} Suppose $cdimM$ exists and is finite. Let $I\in \mathfrak{D}^{-1}(M)$ such that $cdim_IM$ is finite. We do induction on $cdim_IM$. The base of the indction is (2.9). Suppose the result is true for $n$. Suppose now that $cdim_IM=n+1$. Let $M=\bigoplus_{\alpha\in A}M_\alpha$ be a direct sum decomposition of $M$ such that $M_\alpha\neq\left\{0\right\}$ for every $\alpha\in A$. Let $\alpha_0\in A$. By (2.8) every connected component of both $\Gamma_I(A)$ and of $\Gamma_I(\bigoplus_{\alpha\in A\setminus\left\{\alpha_0\right\}}M_\alpha)$ is contained in a single connected component of $\Gamma_I(M)$. Since $\Gamma_I(M_{\alpha_0})$ has at least one component it follows that 
\[cdim_I\bigoplus_{\alpha\in A\setminus\left\{\alpha_0\right\}}M_\alpha\leq n\]
From the induction hypothesis it follows that $\left|A\setminus\left\{\alpha_0\right\}\right|\leq n$, that is $\left|A\right|\leq n+1$. This concludes the proof. $\blacksquare$

\

\noindent Thus the $I$-combinatorial dimension, $cdim_IM$, of a module $M$ with respect to an ideal $I\in\mathfrak{D}^{-1}(M)$, when finite, is an upper bound for the Krull-Schmidt length, $KS\ell(M)$, of $M$. We now improve this bound in the case where the $I$-graph of cyclic modules, $\Gamma_I(M)$, of $M$ satisfies certain conditions. Given an ideal $I$, a module $M$, and $x\in M\setminus IM$, we will denote the connected component of $Rx$ in $\Gamma_I(M)$ by $C_x^I$. We will say that a subset $\Sigma$ of $M\setminus IM$ is fundamental with respect to $I$ if $\Sigma$ generates $M$ and if the inequality

\[\sum_{\left|\Gamma_I(M)\right|\setminus C_x^I} Ry\neq M\]
  
\noindent holds for every $x\in \Sigma$. We will denote by $\mathfrak{F}(I)$ the fundamental domain of decomposition of $I$, that is, $\mathfrak{F}(I)$ will denote the class of all modules $M\in\mathfrak{D}(I)$ such that $M$ has fundamental subsets with respect to $I$. Further, we will denote by $\mathfrak{F}^{-1}(M)$ the fundamental codomain of decomposition of $M$, that is, $\mathfrak{F}^{-1}(M)$ will denote the set of all ideals $I$ such that $M\in\mathfrak{F}(I)$. The following lemma says that the set of connected components generated by cyclic modules generated by elements of a fundamental subset $\Sigma$ of a module $M$ wih respect to an ideal $I$, does not depend on $\Sigma$.

\begin{lem}
Let $I$ be an ideal. Let $M$ be a module. If $\Sigma,\Sigma'$ are fundamental subsets of $M$, with respect to $I$, then
\[\left\{C_x^I:x\in \Sigma\right\}=\left\{C_y^I:y\in \Sigma'\right\}\]
\end{lem}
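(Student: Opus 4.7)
The plan is to prove one inclusion and then invoke symmetry. Fix $x\in\Sigma$ and aim to show that $C_x^I$ belongs to $\{C_y^I:y\in\Sigma'\}$. I will argue by contradiction: assume that no $y\in\Sigma'$ lies in $C_x^I$, and then derive a violation of the fundamental condition for $\Sigma$ at the element $x$.

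Assuming $C_y^I\neq C_x^I$ for every $y\in\Sigma'$, each such $y$ is (by the definition of $\Sigma'\subseteq M\setminus IM$) a generator of a vertex $Ry$ of $\Gamma_I(M)$ lying outside $C_x^I$. Hence
\[Ry\subseteq \sum_{Rz\in\left|\Gamma_I(M)\right|\setminus C_x^I}Rz\]
for every $y\in\Sigma'$. Summing over $\Sigma'$ and using that $\Sigma'$ generates $M$, I obtain
\[M=\sum_{y\in\Sigma'}Ry\subseteq \sum_{Rz\in\left|\Gamma_I(M)\right|\setminus C_x^I}Rz.\]
This directly contradicts the defining inequality of the fundamental subset $\Sigma$ at $x$, namely
\[\sum_{\left|\Gamma_I(M)\right|\setminus C_x^I}Rz\neq M.\]
Therefore there exists $y\in\Sigma'$ with $C_x^I=C_y^I$, which proves $\{C_x^I:x\in\Sigma\}\subseteq\{C_y^I:y\in\Sigma'\}$.

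Swapping the roles of $\Sigma$ and $\Sigma'$ in the argument above yields the reverse inclusion, and the equality of the two collections of connected components follows. The only subtle step is the first one, namely recognizing that the fundamental inequality is exactly the obstruction that prevents a generating set from missing a component; once this observation is made, the proof is essentially a one-line contradiction, so I do not anticipate any serious technical obstacle.
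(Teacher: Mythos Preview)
Your proof is correct and follows essentially the same argument as the paper: assume some $C_x^I$ with $x\in\Sigma$ is not of the form $C_y^I$ for any $y\in\Sigma'$, observe that then $\sum_{y\in\Sigma'}Ry\leq\sum_{\left|\Gamma_I(M)\right|\setminus C_x^I}Rz$, and derive a contradiction from $\Sigma'$ generating $M$ and the fundamental inequality for $\Sigma$ at $x$. The paper states only one direction and leaves symmetry implicit, just as you do.
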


\begin{proof}
Suppose $\Sigma,\Sigma'$ are fundamental subsets of $M$, with respect to $I$. Suppose there exists $x\in \Sigma$ such that there does not exist $y\in \Sigma'$ such that $C_x^I=C_y^I$. The inequality 
\[\sum_{y\in \Sigma'}Ry\leq \sum_{\Gamma_I(M)\setminus C_x^I}Ry\]
follows. The left hand side of this inequality is equal to $M$ while the right hand side is a proper submodule of $M$, a contradiction. This completes the proof.
\end{proof}

\noindent Thus, given an ideal $I$ and a module $M$, we define the $I$-fundamental combinatorial dimension of $M$ ($fcdim_IM$) as the cardinality of the set of connected components of $\Gamma_I(M)$ generated by cyclic modules generated by any fundamental subset of $M$ with respect to $I$. If $\mathfrak{F}^{-1}(M)\neq \emptyset$, we define the fundamental combinatorial dimension of $M$ ($fcdimM$) as the minimum of $I$-fundamental combinatorial dimensions of $M$, with $I\in\mathfrak{F}^{-1}(M)$.

\begin{thm}
Let $I$ be an ideal. Let $M$ be a module. If $fcdim_IM$ exists and is finite then
\[KS\ell(M)\leq fcdim_IM\leq cdim_IM\]
\end{thm}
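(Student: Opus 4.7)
The plan is to establish the two inequalities separately. The right inequality $fcdim_IM\leq cdim_IM$ is tautological, since by definition the set $\{C_x^I:x\in\Sigma\}$ counted by $fcdim_IM$ is a subset of the set of all connected components of $\Gamma_I(M)$ counted by $cdim_IM$.

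For the main bound $KS\ell(M)\leq fcdim_IM$, I will fix a fundamental subset $\Sigma$ of $M$ with respect to $I$ together with an arbitrary direct sum decomposition $M=\bigoplus_{\alpha\in A}M_\alpha$ in which every $M_\alpha\neq\{0\}$, and produce an injection $\alpha\mapsto C_\alpha$ from $A$ into $\{C_s^I:s\in\Sigma\}$. Each $M_\alpha$ is pure in $M$, so Lemma 2.4 gives $M_\alpha\in\mathfrak{D}(I)$ and realizes $\Gamma_I(M_\alpha)$ as a complete subgraph of $\Gamma_I(M)$. The component $C_\alpha$ will be chosen as any element of $\{C_s^I:s\in\Sigma\}$ containing a vertex of $\Gamma_I(M_\alpha)$. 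Injectivity of this assignment follows from Corollary 2.7: if one component contained vertices $Rv_\alpha\in\Gamma_I(M_\alpha)$ and $Rv_\beta\in\Gamma_I(M_\beta)$ with $\alpha\neq\beta$, applying Corollary 2.7 to $M=M_\alpha\oplus\bigoplus_{\gamma\neq\alpha}M_\gamma$ would force $v_\beta\in M_\alpha\oplus I\bigoplus_{\gamma\neq\alpha}M_\gamma$, and projecting to $M_\beta$ would give $v_\beta\in IM_\beta$, contradicting $v_\beta\in M_\beta\setminus IM_\beta$.

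The main obstacle is the existence step: for each $\alpha_0\in A$, I need to produce an $s\in\Sigma$ with $C_s^I$ containing a vertex of $\Gamma_I(M_{\alpha_0})$. Setting $B=\bigoplus_{\gamma\neq\alpha_0}M_\gamma$ and using that $\pi_{\alpha_0}(\Sigma)$ generates $M_{\alpha_0}\neq\{0\}$ while $I^2M_{\alpha_0}=\{0\}$ forces $IM_{\alpha_0}\neq M_{\alpha_0}$, one obtains $s\in\Sigma$ with $a:=\pi_{\alpha_0}(s)\notin IM_{\alpha_0}$, and writes $s=a+b$ with $b\in B$. If $b\in IB$, then $IB\subseteq IM=ann_IM$ gives $Ib=\{0\}$ and hence $Is=Ia$, so $Is\cap Ia=Ia\neq\{0\}$, making $Rs$ adjacent to $Ra$ in $\Gamma_I(M)$; then $C_s^I$ contains the vertex $Ra\in\Gamma_I(M_{\alpha_0})$, as required. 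If instead $b\notin IB$, then $Rb$ is a vertex of $\Gamma_I(B)$; applying Corollary 2.7 twice (to $M=M_{\alpha_0}\oplus B$ and to $M=B\oplus M_{\alpha_0}$) forces $C_s^I$ to contain no vertex of $\Gamma_I(M_{\alpha_0})$ and no vertex of $\Gamma_I(B)$, yielding
\[\sum_{y\in\left|\Gamma_I(M)\right|\setminus C_s^I}Ry\supseteq\sum_{y\in\left|\Gamma_I(M_{\alpha_0})\right|}Ry+\sum_{y\in\left|\Gamma_I(B)\right|}Ry=M_{\alpha_0}+B=M,\]
contradicting fundamentality of $\Sigma$. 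The second case is therefore impossible, so the existence step succeeds and the proof is complete.
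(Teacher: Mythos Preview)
Your proof is correct. Both inequalities are established soundly: the existence step works because in Case~2 Corollary~2.7 applied in each direction shows $C_s^I$ misses both $|\Gamma_I(M_{\alpha_0})|$ and $|\Gamma_I(B)|$, which together generate $M$, contradicting fundamentality; and the injectivity step is a clean application of Corollary~2.7.

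Your route differs from the paper's, however. You build an injection $A\to\{C_s^I:s\in\Sigma\}$ by assigning to each summand $M_\alpha$ a fundamental component meeting $|\Gamma_I(M_\alpha)|$, and then verify injectivity. The paper instead goes in the opposite direction: for each fundamental component $C_i$ it uses fundamentality directly (if $C_i$ met no $|\Gamma_I(M_\alpha)|$, then $\sum_{\alpha}\sum_{y\in|\Gamma_I(M_\alpha)|}Ry=M$ would already lie in $\sum_{|\Gamma_I(M)|\setminus C_i}Ry$) to obtain some $\alpha_i$ with $C_i\cap|\Gamma_I(M_{\alpha_i})|\neq\emptyset$, and then argues via (2.7)/(2.8) that $\Sigma$, hence $M$, is contained in $\bigoplus_i M_{\alpha_i}$, forcing $A=\{\alpha_1,\dots,\alpha_n\}$. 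The paper's argument is shorter because it avoids your case analysis in the existence step: fundamentality is invoked once per component rather than once per summand, and the generation property of $\Sigma$ does the rest. Your approach, on the other hand, makes the counting completely explicit via an injection and isolates exactly where Corollary~2.7 enters.
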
 

\begin{proof}
Let $\Sigma$ be a fundamental subset of $M$ with respect to $I$. Let $C_1,...,C_n$ be the connected components of $\Gamma_I(M)$ defined by elements of $\Sigma$. Let $M=\bigoplus_{\alpha\in A}M_\alpha$ be a direct sum decomposition of $M$ such that $M_\alpha\neq\left\{0\right\}$ for every $\alpha\in A$. It is easily seen that the sum of all vertices of all graphs $\Gamma_I(M_\alpha)$, $\alpha\in A$, is equal to $M$. From this it follows that there exist $\alpha_1,...,\alpha_n\in A$, not necessarely different, such that 
\[C_i\cap \left|\Gamma_I(M_{\alpha_i})\right|\neq\emptyset\]
for every $1\leq i\leq n$. From this and from (2.8) it follows that $C_i\subseteq \left|\Gamma_I(M_{\alpha_i})\right|$ for every $1\leq i\leq n$. Thus 

\[\sum_{x\in \Sigma}Rx\leq \bigoplus_{i=1}^nM_{\alpha_i}\]

\noindent From the fact that the left hand side of this identity is equal to $M$ it follows that $\left|A\right|\leq n$. We conclude that $KS\ell(M)\leq fcdim_IM$. The right hand side of this last inequality is clearly less than or equal to $cdim_IM$. This concludes the proof. 
\end{proof}

\begin{cor}
Let $M$ be a module. If $fcdimM$ exists and is finite then
\[KS\ell(M)\leq fcdimM\]
\end{cor}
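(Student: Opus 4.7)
The plan is to derive the corollary directly from Theorem 2.11 by unwinding the definition of the fundamental combinatorial dimension. Recall that $fcdimM$ was defined as the infimum of $fcdim_I M$ taken over all ideals $I \in \mathfrak{F}^{-1}(M)$, and the hypothesis ``$fcdimM$ exists and is finite'' presupposes that $\mathfrak{F}^{-1}(M) \neq \emptyset$.

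First, I would observe that since $fcdimM$ is a minimum (not merely an infimum) of cardinalities of sets of connected components, and it is assumed finite, the minimum is attained. Thus there exists an ideal $I \in \mathfrak{F}^{-1}(M)$ with $fcdim_I M = fcdimM < \infty$. In particular $M \in \mathfrak{F}(I) \subseteq \mathfrak{D}(I)$, so this $I$ is admissible for applying the previous result.

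Next, I would invoke Theorem 2.11 with this specific $I$: since $fcdim_I M$ exists and is finite, the theorem yields
\[
KS\ell(M) \leq fcdim_I M = fcdimM,
\]
which is exactly the statement of the corollary. I do not expect any real obstacle here: the proof is a one-line deduction once the definition of $fcdimM$ as a minimum is correctly interpreted, and the only subtle point to note is that the finiteness assumption guarantees the minimum is realized, so that Theorem 2.11 can be applied to a concrete witnessing ideal.
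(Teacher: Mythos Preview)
Your proposal is correct and matches the paper's intended approach: Corollary 2.12 is stated without proof immediately after Theorem 2.11, as a direct consequence obtained by picking an ideal $I\in\mathfrak{F}^{-1}(M)$ realizing the minimum $fcdim_IM=fcdimM$ and applying Theorem 2.11 to that $I$.
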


\begin{cor}
Let $M$ be a module
\begin{enumerate}
\item If $fcdimM$ exists and is finite, then $M$ admits finite indecomposable direct sum decompositions.
\item If $fcdimM=1$, then $M$ is indecomposable.
\end{enumerate}
\end{cor}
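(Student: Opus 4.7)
The plan is to obtain both parts directly from Corollary 2.12, which gives the bound $KS\ell(M)\leq fcdimM$ whenever $fcdimM$ exists and is finite. Since the corollary under consideration is stated exactly as the analogue of Corollary 2.3 (which plays the same role for $cdimM$ using Theorem 2.2), the argument should mirror that derivation word-for-word, only invoking 2.12 in place of 2.2.

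For part (1), I would assume $fcdimM$ exists and is finite. By Corollary 2.12, $KS\ell(M)\leq fcdimM<\infty$, so the Krull--Schmidt length is finite. As recalled at the beginning of Section 2, a module with finite Krull--Schmidt length admits finite indecomposable direct sum decompositions; applying this observation concludes part (1).

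For part (2), I would assume $fcdimM=1$. Again by Corollary 2.12, $KS\ell(M)\leq 1$, so $KS\ell(M)=1$. Using the characterization from the opening of Section 2 that $KS\ell(M)=1$ if and only if $M$ is indecomposable, we conclude that $M$ is indecomposable.

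No step presents a real obstacle: the whole content has already been absorbed into Theorem 2.11 and Corollary 2.12, and only the standard reformulation of a finite Krull--Schmidt length in terms of indecomposable direct sum decompositions is used. The argument is therefore a one-line invocation of Corollary 2.12 for each clause.
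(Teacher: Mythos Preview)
Your proposal is correct and matches the paper's approach exactly: the paper states this corollary without proof, leaving it as an immediate consequence of Corollary~2.12 (just as Corollary~2.3 is an immediate consequence of Theorem~2.2), and you have spelled out precisely that derivation.
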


\section{Indecomposability on trivial extensions}

\noindent Given a field $k$ and a $k$-vector space $V$, the trivial extension of $k$ by $V$, $k\ltimes V$, is defined as follows: As a $k$-vector space, $k\ltimes V$ will be equal to $k\oplus V$. Given $(a,v),(b,w)\in k\ltimes V$, we define $(a,v)(b,w)$ as $(ab,bv+aw)$. It is easily seen that with this structure $k\ltimes V$ is a unital commutative $k$-algebra with $(1_k,0)$ as $1$. This is a special case of a much more general construction (see [12], [16]). In this section we apply results obtained in section 2 to the study of the problem of existence of indecomposable modules of given Goldie dimensions on algebras of the form $k\ltimes V$, where $V$ is a finite dimensional $k$-vector space.

\noindent Observe that given a field $k$ and a $k$-vector space $V$, the dimension of $k\ltimes V$ as a $k$-vector space is $dim_kV+1$, and that the group of units, $U(k\ltimes V)$, of $k\ltimes V$, is $k^\times\times V$. It follows that $k\ltimes V$ is a local ring, with Jacobson radical $J(k\ltimes V)$ equal to $\left\{0\right\}\times V$. Thus, up to isomorphisms, the module

\[S=k\ltimes V/J(k\ltimes V)\] 
 
\noindent is the only simple $k\ltimes V$-module and a $k\ltimes V$-module $M$ is simple if and only if $dim_kM=1$. Moreover, since $J(k\ltimes V)$ is clearly semisimple, the following equality
\[J(k\ltimes V)=Soc(k\ltimes V)\]
holds. It follows that $k\ltimes V$ is semiartinian with Goldie dimension $dim_kV$. It also follows that if $E_0$ denotes the minimal injective cogenerator in $k\ltimes V$, $E(S)$, then 
\[E(M)=E_0^{(dim_kSoc(M))}=E_0^{(GdimM)}\]
for every module $M$ with finite Goldie dimension. The following will be the main result of this section.

\begin{thm}
Let $k$ be a field. Let $V$ be a finite dimensional $k$-vector space.
\begin{enumerate}
\item If $k$ has characteristic $\neq 2$, then there exist indecomposable $k\ltimes V$-modules with Goldie dimension $m$ for every $m$ such that $1\leq m\leq 3dim_kV-2$.
\item If $k$ has characteristic $2$, then there exist indecomposable $k\ltimes V$-modules with Goldie dimension $m$ for every $m$ such that $1\leq m\leq dim_kV$, $dim_kV+\left\lfloor dim_kV/2\right\rfloor<m\leq 2dim_kV-1$, or $2dim_kV+\left\lfloor dim_kV/2\right\rfloor-1< m\leq 3dim_kV-2$.
\end{enumerate}
\end{thm}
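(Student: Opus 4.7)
My plan is to establish this theorem by an explicit construction for each value of $m$ in the claimed range, deducing indecomposability by applying Corollary 2.3 (and occasionally Corollary 2.14) to the graph $\Gamma_J(M)$, where $J = J(R) = \{0\}\times V$. Set $n = \dim_k V$ and fix a basis $v_1,\ldots,v_n$ of $V$. Since $J^2 = 0$, the inclusion $JM\subseteq \mathrm{ann}_J M$ is automatic, and the Goldie dimension of $M$ coincides with $\dim_k \mathrm{ann}_J M$. For each candidate $M$ one therefore needs to carry out three tasks: compute the socle to pin down the Goldie dimension, verify $\mathrm{ann}_J^* M = JM$ so that $J\in \mathfrak{D}^{-1}(M)$, and check connectivity of $\Gamma_J(M)$.

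I would split the range of $m$ into three families. For $1\leq m\leq n$ take the cyclic modules $M = R/W$ with $W\subseteq V$ of codimension $m$ in $V$; these are indecomposable because cyclic modules over a local ring have local endomorphism rings, and their socle $V/W$ has dimension exactly $m$. For $n < m \leq 2n-1$ take two-generator modules $M = (Ra\oplus Rb)/N$, where $N$ is generated by $s := 2n-m$ link relations. In characteristic $\neq 2$ the symmetric relations $v_i a - v_i b$ for $i = 1,\ldots,s$ work, giving $\dim_k JM = 2n - s = m$, $\mathrm{Soc}\, M = JM$, $J$ a decomposition ideal, and a graph $\Gamma_J(M)$ in which $Ra$ and $Rb$ are adjacent because $v_1 a = v_1 b \in Ja\cap Jb$, while every other vertex is adjacent to $Ra$ or $Rb$ by an analogous intersection calculation. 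For $2n\leq m\leq 3n-2$ take three-generator modules $(Ra\oplus Rb\oplus Rc)/N'$ with $s' := 3n-m \geq 2$ link relations arranged so that all three generators lie in a single graph component.

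The main obstacle is the characteristic-$2$ case. With the symmetric link relations $v_i a = v_i b$ the element $a+b$ satisfies $v_i(a+b) = 0$ for all $i\leq s$, placing $a+b$ in $\mathrm{ann}_J^* M\setminus JM$ and destroying the decomposition-ideal condition. To restore it, I would use \emph{shifted} link relations $v_{2j-1}a - v_{2j}b$ built from disjoint index pairs on the two sides; this requires $2s\leq n$ and hence $m = 2n-s \geq n+\lceil n/2\rceil$, which accounts for the first exclusion interval $(n, n+\lfloor n/2\rfloor]$. The analogous shifted construction for three generators yields the second exclusion interval $(2n-1, 2n+\lfloor n/2\rfloor - 1]$. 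Apart from characteristic $2$, the remaining difficulty is the bookkeeping needed to verify connectivity of $\Gamma_J(M)$ in the three-generator case, where several overlapping link relations must be tracked simultaneously; but each such verification reduces to a short dimension count of $Ja\cap Jb$, $Jb\cap Jc$, $Ja\cap Jc$ and their consequences for generic vertices $R(\alpha a + \beta b + \gamma c + u)$.
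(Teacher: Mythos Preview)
Your symmetric-relation construction in characteristic $\neq 2$ has exactly the flaw you diagnosed in characteristic $2$ --- you just looked at the wrong element. With relations $v_i a = v_i b$ for $i\le s$, the element $a-b$ satisfies $v_i(a-b)=0$ for all $i\le s$, so $a-b\in\mathrm{ann}_J^*M$; but $a-b$ has nonzero scalar part in $R^2$, so $a-b\notin JM$. Hence $J\notin\mathfrak{D}^{-1}(M)$ and Corollary~2.3 is unavailable. Worse, this $M$ is genuinely decomposable: one checks that $R(a+b)\cap R(a-b)=\{0\}$ and $R(a+b)+R(a-b)=M$, so $M\cong R(a+b)\oplus R(a-b)$ with summands of Goldie dimension $n$ and $n-s$. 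The whole middle range $n<m\le 2n-1$ therefore collapses, and the same defect propagates to your three-generator family.

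The paper avoids this by using \emph{shifted} relations in every characteristic: the kernel is spanned by vectors $(0_i,v,-v,0_i)$, i.e.\ relations $v_{i+j}a=v_jb$ with distinct indices on the two sides. Then for any $(\lambda,\mu)\neq(0,0)$ and any $v\in V\setminus\{0\}$ one has $(\lambda v,\mu v)\notin N$ (the equations $\lambda d_{j+i}=-\mu d_j$ together with $d_l=0$ for $l\le i$ force $v=0$), so $(J\ast R^2)\cap N=\{0\}$ and Lemma~3.2 applies. Your disjoint-pair idea $v_{2j-1}a=v_{2j}b$ is a special case of the same principle and would also work; the point is that you need an index shift in characteristic $\neq 2$ just as much as in characteristic $2$, because the obstruction is $a-b$ rather than $a+b$.
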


\noindent We now reduce the proof of (3.1) to the proof of a series of lemmas. Given an ideal $I$ and a module $M$ we will write, in the next lemma and in the rest of this section, $I\ast M$ for the set $\bigcup_{a\in M}Ia$.

\begin{lem}
Let $I$ be an ideal
\begin{enumerate}
\item Let $\left\{M_\alpha :\alpha\in A\right\}$ be a collection of modules. If $M_\alpha\in\mathfrak{D}(I)$ for all $\alpha\in A$, then $\bigoplus_{\alpha\in A}M_\alpha\in\mathfrak{D}(I)$.
\item Let $M$ be a module. Let $N$ be a submodule of $M$. Suppose \[(I\ast M)\cap N=\left\{0\right\}\] If $M\in\mathfrak{D}(I)$, then $M/N\in\mathfrak{D}(I)$.
\end{enumerate}
\end{lem}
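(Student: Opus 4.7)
The plan is to verify in each case the three defining equalities $IL=ann_IL=ann_I^*L$ of a decomposition ideal, with $L=\bigoplus_{\alpha\in A}M_\alpha$ for part (1) and $L=M/N$ for part (2).

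For part (1), I would first note that $M_\alpha\in\mathfrak{D}(I)$ forces $I^2M_\alpha=\{0\}$ for each $\alpha$, hence $I^2L=\{0\}$, which already yields $IL\subseteq ann_IL$. The inclusion $ann_IL\subseteq ann_I^*L$ is the general fact recorded just before Definition 2.1. To close the loop, I would show that $ann_I^*L\subseteq\bigoplus_\alpha ann_I^*M_\alpha$: if a single $r\in I\setminus\{0\}$ annihilates a finite sum $\sum_\alpha x_\alpha$ representing an element of $L$, then by directness $rx_\alpha=0$ for every $\alpha$, so each $x_\alpha\in ann_I^*M_\alpha$; by hypothesis this equals $IM_\alpha$, whence $ann_I^*L\subseteq\bigoplus_\alpha IM_\alpha=IL$. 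The reverse inclusion $\bigoplus_\alpha ann_I^*M_\alpha\subseteq ann_I^*L$ would require a common annihilator and is not automatic, but it is not needed to close the cycle of inclusions.

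For part (2), let $\pi\colon M\to M/N$ be the quotient projection. The hypothesis $(I\ast M)\cap N=\{0\}$ translates to the statement that for every $r\in I$ and every $x\in M$, $rx\in N$ forces $rx=0$. Using this, I would check the three equalities $\pi(IM)=I(M/N)$ (immediate from $r\pi(x)=\pi(rx)$), $\pi(ann_IM)=ann_I(M/N)$ (if $\pi(x)\in ann_I(M/N)$ then $Ix\subseteq N$, which the hypothesis upgrades to $Ix=\{0\}$), and $\pi(ann_I^*M)=ann_I^*(M/N)$ (if $r\pi(x)=0$ for some $r\in I\setminus\{0\}$ then $rx\in N$ forces $rx=0$, so $x\in ann_I^*M$); the reverse inclusions in all three identities are routine. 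Since $M\in\mathfrak{D}(I)$ supplies $IM=ann_IM=ann_I^*M$, applying $\pi$ then yields $I(M/N)=ann_I(M/N)=ann_I^*(M/N)$, as required.

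The main obstacle in both parts is the existential axiom defining $ann_I^*$: it does not automatically commute with direct sums (which would require a common annihilator across coordinates) nor with quotients (since $rx$ could a priori lie in $N$ without vanishing). In part (1) only one direction of commutation is actually required, so the easy inclusion suffices; in part (2) the hypothesis $(I\ast M)\cap N=\{0\}$ is tailored precisely to forbid $rx$ from landing in $N$ without being zero, so the existential quantifier can be lifted across $\pi$ and the decomposition property transports to $M/N$.
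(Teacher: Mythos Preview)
Your proposal is correct and follows essentially the same approach as the paper. In both parts the paper argues exactly as you do: for the direct sum it uses $I^2M_\alpha=\{0\}$ to get $I^2L=\{0\}$ and then projects a single annihilator $r$ onto each coordinate to push $ann_I^*L$ into $\bigoplus_\alpha IM_\alpha=IL$; for the quotient it uses $(I\ast M)\cap N=\{0\}$ to upgrade $rx\in N$ to $rx=0$ and thereby show $ann_I^*(M/N)\subseteq I(M/N)$. Your Part~2 is organized slightly differently---you verify three separate $\pi$-identities rather than arguing the single chain $I(M/N)\subseteq ann_I(M/N)\subseteq ann_I^*(M/N)\subseteq I(M/N)$ directly---but the substantive step is identical.
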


\begin{proof}
Let $\left\{M_\alpha:\alpha\in A\right\}$ be a collection of modules such that $M_\alpha\in\mathfrak{D}(I)$ for every $\alpha\in A$. Since $I^2M_\alpha=\left\{0\right\}$ for every $\alpha\in A$, we have that $I^2\bigoplus_{\alpha\in A}M_\alpha=\left\{0\right\}$. Now, let $a\in ann^*_I\bigoplus_{\alpha\in A}M_\alpha$. Let $r\in I\setminus\left\{0\right\}$ be such that $ra=0$. For each $\alpha\in A$, let $\pi_\alpha$ be the cannonical projection of $\bigoplus_{\alpha\in A}M_\alpha$ onto $M_\alpha$. Then $r\pi_\alpha(a)=0$ for all $\alpha\in A$, that is $\pi_\alpha(a)\in ann_I^*M_\alpha$ for all $\alpha\in A$. It follows that $\pi_\alpha(a)\in IM_\alpha$ for all $\alpha\in A$, that is $a\in I\bigoplus_{\alpha\in A}M_\alpha$. This concludes the proof of \textit{1}.
Now, let $M\in\mathfrak{D}(I)$. Let $N$ be a submodule of $M$ such that $(I\ast M)\cap N=\left\{0\right\}$. Since $I^2M=\left\{0\right\}$ it follows that $I^2M+N/N=N$. Let $a+N\in ann^*_IM/N$. Let $r\in I\setminus\left\{0\right\}$ be such that $ra\in N$. It follows, since $ra\in (I\ast M)\cap N$, that $ra=0$, that is, $a\in ann^*_IM$. It follows that $a\in IM$, that is $a+N\in IM+N/N$. This concludes the proof of \textit{2}. 
\end{proof}

\noindent Thus, given an ideal $I$, the domain of decomposition, $\mathfrak{D}(I)$, of $I$, is closed under direct sums, under quotients satisfying condition \textit{2} of (3.2), and by (2.4), under $I$-pure submodules. It follows that if $R$ is in the domain of decomposition of $I$, then every module that can be expressed as a quotient satisfying condition \textit{2} of (3.2) of some $I$-pure submodule of a free module, is also in the domain of decomposition of $I$. In particular, in this case, $I$ is a decomposition ideal of $P$ for every projective module $P$. The following lemma establishes, in the case where the base ring $R$ is local, a criterion under which $J(R)$ is a decomposition ideal of $R$.

\begin{lem}
Suppose $R$ is local. Then $J(R)\in\mathfrak{D}^{-1}(R)$ if and only if 

\[J(R)^2=\left\{0\right\}\]
\end{lem}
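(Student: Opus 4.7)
The plan is to read off the forward direction directly from the observation recorded just before Definition~2.1, that $I^{2}M=\{0\}$ is equivalent to $IM\subseteq ann_{I}M$, and to prove the reverse direction by combining the hypothesis $J(R)^{2}=\{0\}$ with the locality of $R$, which forces every element outside $J(R)$ to be a unit.

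For the forward direction, assume $J(R)\in\mathfrak{D}^{-1}(R)$. By definition of decomposition ideal, $J(R)\cdot R=ann_{J(R)}R$, and since $R$ is unital we have $J(R)\cdot R=J(R)$; in particular $J(R)\subseteq ann_{J(R)}R$. Applying the cited equivalence with $M=R$ yields $J(R)^{2}=\{0\}$.

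For the reverse direction, assume $J(R)^{2}=\{0\}$ and verify the chain of equalities
\[J(R)\;=\;J(R)\cdot R\;=\;ann_{J(R)}R\;=\;ann_{J(R)}^{\,*}R.\]
The first equality is again immediate from unitality. The inclusion $J(R)\subseteq ann_{J(R)}R$ is precisely $J(R)^{2}=\{0\}$, and the inclusion $ann_{J(R)}R\subseteq ann_{J(R)}^{\,*}R$ is the general inclusion noted in the text. The remaining inclusion $ann_{J(R)}^{\,*}R\subseteq J(R)$ is where the locality of $R$ enters: if $a\in R\setminus J(R)$, then $a$ is a unit, so for any $r\in J(R)\setminus\{0\}$ the equation $ra=0$ would force $r=ra\cdot a^{-1}=0$, a contradiction. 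Hence every element of $R\setminus J(R)$ lies outside $ann_{J(R)}^{\,*}R$, and the three sets coincide with $J(R)$.

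Essentially everything in the argument is a direct unwinding of the definitions; the one substantive step is the final inclusion $ann_{J(R)}^{\,*}R\subseteq J(R)$, where the local hypothesis is converted into the statement that a unit cannot be annihilated by any nonzero element of $J(R)$. I do not anticipate any genuine obstacle beyond this single observation.
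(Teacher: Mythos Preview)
Your proof is correct and follows essentially the same approach as the paper's. Both arguments dispatch the forward implication immediately from the definition, and for the converse both isolate the single substantive step $ann^{*}_{J(R)}R\subseteq J(R)$ via the observation that in a local ring every $a\notin J(R)$ is a unit and hence cannot be killed by a nonzero element; your write-up simply makes the chain of inclusions $J(R)\subseteq ann_{J(R)}R\subseteq ann^{*}_{J(R)}R\subseteq J(R)$ more explicit than the paper does.
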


\begin{proof}
It is clear that if $J(R)\in\mathfrak{D}^{-1}(R)$, then $J(R)^2=\left\{0\right\}$. Now, if $a\in R\setminus J(R)$, then $a\in U(R)$. From this it easily follows that $ann^*_{J(R)}R\subseteq J(R)$. Thus, if we suppose that $J(R)^2=\left\{0\right\}$, then clearly $J(R)\in \mathfrak{D}^{-1}(R)$. This concludes the proof.
\end{proof}

\noindent Given a $k$-vector space $V$, it is immediate that

\[J(k\ltimes V)^2=Soc(k\ltimes V)^2=\left\{0\right\}\]

\noindent That is, by (3.3), $J(k\ltimes V)=Soc(k\ltimes V)$ is a decomposition ideal of $k\ltimes V$. It follows that $Soc(k\ltimes V)$ is a decomposition ideal of $L$ for every free $k\ltimes V$-module $L$. In particular, in the case in which $k=\mathbb{F}_2$ and $V=\mathbb{F}_2^2$, $Soc(\mathbb{F}_2\ltimes \mathbb{F}_2^2)$ is a decomposition ideal of $\mathbb{F}_2\ltimes\mathbb{F}_2^2$ and of $(\mathbb{F}_2\ltimes\mathbb{F}_2^2)^2$. Now, it is easily seen that while

\[\Gamma_{Soc(\mathbb{F}_2\ltimes\mathbb{F}_2^2)}(\mathbb{F}_2\ltimes\mathbb{F}_2^2)\]

\noindent has only one vertex, the graph

\[\Gamma_{Soc(\mathbb{F}_2\ltimes\mathbb{F}_2^2)}((\mathbb{F}_2\ltimes\mathbb{F}_2^2)^2)\]

\noindent has three connected components. Thus, while $cdim \mathbb{F}_2\ltimes\mathbb{F}_2^2=1$, we have that $cdim (\mathbb{F}_2\ltimes\mathbb{F}_2^2)^2=3$. We conclude that the function $cdim$ that associates to each module $M$ such that $\mathfrak{D}^{-1}(M)\neq\emptyset$, its combinatorial dimension $cdimM$, need not, in general, be additive. 

\

\noindent \textit{\textbf{Proof of 3.1}} Throughout the proof we will write $n$ instead of $dim_kV$ and we will identify each finite dimensional $k$-vector space $W$ with $k^{(dim_kW)}$. Let $m\leq n$. Let $W$ be a subspace of $V$ such that $dim_kW=n-m$. From the the correspondence theorem it follows that $k\ltimes V/\left\{0\right\}\oplus W$ is a local module with Goldie dimension $m$. Thus there exist indecomposable $k\ltimes V$-modules with Goldie dimension $m$ for every $m\leq n$.

Now, suppose $W$ is a $k$-vector space such that $n\leq dim_kW$. Let $i$ be such that $1\leq i\leq n-1$. We will denote by $W(i)$ the subspace of $W\oplus V$ generated by all vectors of the form:

\[(0_{dim_kW-n+i},v,-v,0_i)\]

\noindent with $v\in k^{n-i}$, and where $0_m$ denotes the $0$ vector in $k^m$ for any $m$. 

\noindent For each $i$ such that $1\leq i\leq n-1$ we will denote by $M_i$ the module

\[(k\ltimes V)^2/Soc(k\ltimes V)(i)\]

\noindent It is easily seen that $dim_kSoc(M_i)=n+i$, that is, $GdimM_i=n+i$. Now, $Soc(k\ltimes V)\ast (k\ltimes V)^2$ is equal to the set of all vectors of the form $(v,0)$, $(0,v)$, or $(v,v)$, with $v\in V$, while $Soc(k\ltimes V)(i)$ is equal to the set of all vectors of the form $(0_i,v,-v,0_i)$, with $v\in k^{n-i}$. From this it follows that if $k$ has characteristic $\neq 2$, or if $k$ has characteristic $2$ and $i> \left\lfloor n/2\right\rfloor$, the following equality

\[(Soc(k\ltimes V)\ast (k\ltimes V)^2)\cap Soc(k\ltimes V)(i)=\left\{0\right\}\]

\noindent holds. From this, from (3.2) and (3.3) and from the identities

\[Soc(k\ltimes V)^2=\left\{0\right\} \ \mbox{and} \ Soc(k\ltimes V)=J(k\ltimes V)\]

\noindent it follows that, in the cases mentioned, $M_i\in\mathfrak{D}(Soc(k\ltimes V))$.

Now, let $\alpha_j^i\in E_0^{n+i}$, $j\in\left\{0,1\right\}$, be such that $ann_{k\ltimes V}\alpha_j^i=\left\{0\right\}$ and such that for every $v\in V$

\[v\alpha_j^i=\left\{\begin{array}{ll}
(v,0_i) & \mbox{if $j=0$} \\
        &                  \\
(0_i,v) & \mbox{if $j=1$}\end{array} \right.\]
 
\noindent Then $M_i$ and the submodule

\[(k\ltimes V)\alpha_0^i+(k\ltimes V)\alpha_1^i\]

\noindent of $E_0^{n+i}$ have identical presentations and are thus isomorphic. We identify $M_i$ with this module. Now, if we identify cyclic submodules of $M_i$ having the same socle, then the set of cyclic submodules of $M_i$ generated by elements of $M_i\setminus (k\ltimes V)M_i$ is precisely the set formed by 

\[Soc(k\ltimes V)\alpha_0^i,\ Soc(k\ltimes V)\alpha_1^i,\ \mbox{and}\ Soc(k\ltimes V)\alpha_0^i+\alpha_1^i \]

\noindent with socles $V\oplus\left\{0_i\right\}$, $\left\{0_i\right\}\oplus V$, and the set $\left\{(v,0_i)+(0_i,v_i):v\in V\right\}$ respectively. Now, the existence of non-zero elements in 

\[Soc(k\ltimes V)\alpha_0^i\cap Soc(k\ltimes V)\alpha_1^i, \ \mbox{in} \ Soc(k\ltimes V)\alpha_0^i\cap Soc(k\ltimes V)\alpha_0^i+\alpha_1^i\]

\noindent and in

\[Soc(k\ltimes V)\alpha_1^i\cap Soc(k\ltimes V)\alpha_0^i+\alpha_1^i\]

\noindent is equivalent to the existence of non-trivial solutions in $k$ to the systems of homogenious linear equations

\[\begin{array}{lcl}x_1&=&0\\ \vdots&\vdots &\vdots \\ x_i&=&0\\x_{i+1}-y_1&=&0\\ \vdots&\vdots&\vdots \\x_n-y_{n-i}&=&0\\ -y_{n-i+1}&=&0\\ \vdots&\vdots &\vdots \\-y_n&=&0\end{array}, \begin{array}{lcl}x_1-y_1&=&0\\ \vdots&\vdots &\vdots \\x_i-y_i&=&0\\ x_{i+1}-y_{i+1}-y_1&=&0\\ \vdots&\vdots &\vdots \\ x_n-y_n-y_{n-i}&=&0\\ -y_{n-i+1}&=&0\\ \vdots &\vdots &\vdots \\ -y_n&=&0\end{array} \mbox{and} \begin{array}{lcl} -y_1&=&0\\ \vdots &\vdots &\vdots \\ -y_i&=&0\\x_1-y_1-y_{i+1}&=&0\\ \vdots&\vdots &\vdots \\ x_{n-i}-y_{n-i}-y_n&=&0\\x_{n_i+1}-y_{n-i+1}&=&0\\ \vdots&\vdots &\vdots \\ x_n-y_n&=&0\end{array}\]

\noindent respectively. All three linear systems are easily seen to have non-trivial solutions over $k$. It follows that $\Gamma_{Soc(k\ltimes V)}(M_i)$ is a complete graph for all $i$ considered. We conclude that there exist indecomposable $k\ltimes V$-modules of Goldie dimension $m$ for all $m$ such that $n\leq m\leq 2n-1$ in the case in which $k$ has characteristic $\neq 2$, and for all $m$ such that $n+\left\lfloor n/2\right\rfloor<m\leq 2n-1$ in the case in which $k$ has characteristic equal to $2$.  
 
Now, for each $i$ such that $1\leq i\leq n-1$, let $M_{n-1,i}$ denote the module

\[(M_{n-1}\oplus k\ltimes V)/Soc(k\ltimes V)(i)\]

\noindent Again, it is easily seen that $dim_kSoc(M_{n-1,i})=2n+i-1$, or equivalently, that $GdimM_{n-1,i}=2n+i-1$. Again, in this case, if $k$ has characteristic $\neq 2$, or $k$ has characteristic $2$ and $\left\lfloor n/2\right\rfloor<i$, we have

\[(Soc(k\ltimes V)\ast (M_{n-1}\oplus k\ltimes V))\cap Soc(M_{n-1})=\left\{0\right\}\]

\noindent from which it follows again that $M_{n-1,i}\in\mathfrak{D}(Soc(k\ltimes V))$ in the cases considered. It follows also that the cannonical projection of $M_{n-1}\oplus k\ltimes V$ onto $M_{n-1,i}$, restricted to $M_{n-1}$ is an isomorphism. We identify $M_{n-1}$ with the image of $M_{n-1}$ under this projection. Thus $M_{n-1}$ can be considered as a $k\ltimes V$-pure submodule of $M_{n-1,i}$, and by (2.4), under this identification, $\Gamma_{Soc(k\ltimes V)}(M_{n-1})$ is a complete subgraph of $\Gamma_{Soc(k\ltimes V)}(M_{n-1,i})$. 

Now, let $\alpha_{n-1,i}\in E_0^{2n+i-1}$ be such that $ann_{k\ltimes V}\alpha_{n-1,i}=\left\{0\right\}$ and such that 

\[v\alpha_{n-1,i}=(0_{n+i-i},v)\]

\noindent for all $v\in V$. If we identify $\alpha_j^{n-1}$ defined above, with $\alpha_j^{n-1}\times\left\{0_i\right\}$ in $E_0^{2n+i-1}$ for all $j=0,1$, then $M_{n-1,i}$ and the submodule

\[(k\ltimes V)\alpha_0^{n-1}+(k\ltimes V)\alpha_1^{n-1}+(k\ltimes V)\alpha_{n-1,i}\]

\noindent of $E_0^{2n+i-1}$ have identical presentations, and thus are isomorphic. We identify $M_{n-1,i}$ with this module. Now, again, identifying cyclic submodules of $M_{n-1,i}$ with the same socle, cyclic modules generated by elements of $M_{n-1,i}\setminus Soc(k\ltimes V)M_{n-1,i}$ are precisely cyclic modules generated by sums of $\alpha_0^{n-1}$, $\alpha_1^{n-1}$, and $\alpha_0^{n-1}+\alpha_1^{n-1}$ with $\alpha_{n-1,i}$ taken two at the time, that is, after identifying vertices with the same socle, the vertex set of $\Gamma_{Soc(k\ltimes V)}(M_{n-1,i})$ is the vertex set of $\Gamma_{Soc(k\ltimes V)}(M_{n-1})$, together with $(k\ltimes V)\alpha_{n-1,i}$ and all cyclic modules generated by sums of $\alpha_{n-1,i}$ and generators of vertices of $\Gamma_{Soc(k\ltimes V)}(M_{n-1})$. 

Now, the existence of non-zero elements in 

\[Soc(k\ltimes V)\alpha_1^{n-1}\cap Soc(k\ltimes V)\alpha_{n-1,i}\]

\noindent and in 

\[Soc(k\ltimes V)\alpha_1^{n-1}\cap Soc(k\ltimes V)\alpha_1^{n-1}+\alpha_{n-1,i}\]

\noindent is equivalent to the existence, in $k$, of non-trivial solutions to the systems of homogenious linear equations

\[\begin{array}{lcl}x_1&=&0\\ \vdots&\vdots&\vdots\\ x_i&=&0\\x_{i+1}-y_1&=&0\\ \vdots&\vdots&\vdots\\ x_n-y_{n-i}&=&0\\ -y_{n-i+1}&=&0\\ \vdots&\vdots&\vdots\\-y_n&=&0\end{array} \mbox{and} \begin{array}{lcl}x_1-y_1&=&0\\ \vdots&\vdots&\vdots\\ x_i-y_i&=&0\\x_{i+1}-y_{i+1}-y_1&=&0\\ \vdots&\vdots&\vdots\\ x_n-y_n-y_{n-i}&=&0\\ -y_{n-i+1}&=&0\\ \vdots&\vdots&\vdots\\-y_n&=&0\end{array}\]

\noindent respectively. It is easily seen that these two systems admit non-trivial solutions. It follows that $(k\ltimes V)\alpha_1^{n-1}$ is adjacent in $\Gamma_{Soc(k\ltimes V)}(M_{n-1,i})$ to vertices $(k\ltimes V)\alpha_{n-1,i}$ and $(k\ltimes V)\alpha_1^{n-1}+\alpha_{n-1,i}$. From this and from the fact that the graph $\Gamma_{Soc(k\ltimes V)}(M_{n-1})$ is connected it follows that $\Gamma_{Soc(k\ltimes V)}(M_{n-1,i})$ has at most three connected components, namely, the connected component generated by $(k\ltimes V)\alpha_0^{n-1}$, and possibly the connected component generated by $(k\ltimes V)\alpha_0^{n-1}+\alpha_{n-1,i}$ and the connected component generated by $(k\ltimes V)\alpha_0^{n-1}+\alpha_1^{n-1}+\alpha_{n-1,i}$. 

\noindent Thus, since all vertices in the connected component generated by $(k\ltimes V)\alpha_0^{n-1}+\alpha_{n-1,i}$ have the same socle, and all vertices in the connected component generated by $(k\ltimes V)\alpha_0^{n-1}+\alpha_1^{n-1}+\alpha_{n-1,i}$ have the same socle, the submodule of $M_{n-1,i}$ generated by these two components has Goldie dimension at most $2n$. It follows that if $i>1$, that is, if $GdimM_{n-1,i}>2n$, then the set $\left\{\alpha_0^{n-1},\alpha_{n-1,i}\right\}$ is fundamental in $M_{n-1,i}$ with respect to $Soc(k\ltimes V)$. It follows that in this case $fcdimM_{n-1,i}=1$. From this and from (2.13) we conclude that there exist indecomposable $k\ltimes V$-modules of Goldie dimension $m$ for all $m$ such that $2n<m\leq 3n-2$ when $k$ has characteristic $\neq 2$, and for all $m$ such that $2n+\left\lfloor n/2\right\rfloor -1< m\leq 3n-2$ when $k$ has characteristic $2$ and $n>2$. Now, the existence of non-zero elements in  $(k\ltimes V)\alpha_0^{n-1}+\alpha_{n-1,i}\cap(k\ltimes V)\alpha_1^{n-1}+\alpha_{n-1,i}$ is equivalent to the existence of non-trivial solutions, in $k$, to the system of homogenious linear equations

\[\begin{array}{lcl}x_1&=&0\\ \vdots&\vdots&\vdots \\x_{n-1}&=&0\\ x_n-y_1&=&0\\-y_2&=&0\\ \vdots&\vdots&\vdots \\-y_i&=&0\\ x_1-y_{i+1}-y_1&=&0\\ \vdots&\vdots&\vdots\\x_{n-i}-y_n-y_{n-i}&=&0\\x_{n-i+1}-y_{n-i+1}&=&0\\ \vdots&\vdots&\vdots\\x_n-y_n&=&0\end{array}\]

\noindent which in the case where $i=1$, has non-trivial solutions. It follows that, in this case, the graph $\Gamma_{Soc(k\ltimes V)}(M_{n-1,1})$ has at most two connected components, namely, the connected component generated by $(k\ltimes V)\alpha_0^{n-1}$, which in this case contains the vertex $(k\ltimes V)\alpha_0^{n-1}+\alpha_{n-1,1}$ and possibly the component generated by $(k\ltimes V)\alpha_0^{n-1}+\alpha_1^{n-1}+\alpha_{n-1,1}$. Again, since all vertices in the latter component have the same socle, the submodule of $M_{n-1,1}$ generated by this component has Goldie dimension $n$. It follows again that in this case the set $\left\{\alpha_0^{n-1},\alpha_{n-1,1}\right\}$ is fundamental in $M_{n-1,1}$, with respect to $Soc(k\ltimes V)$, from which we conclude that again $fcdimM_{n-1,1}=1$. From this and from (2.13) we conclude that in the case where $k$ has characteristic $\neq 2$ there exist indecomposable $k\ltimes V$-modules of Goldie dimension $2n$. This conlcudes the proof $\blacksquare$

\

\noindent The author does not know if there exist indecomposable $k\ltimes V$-modules with Goldie dimension in the intervals $dim_kV\leq m\leq dim_kV+\left\lfloor dim_kV/2\right\rfloor$ and $2dim_kV-1\leq m\leq 2dim_kV\left\lfloor dim_kV 2\right\rfloor-1$ when the field $k$ has characteristic $2$. There are cases in which there exist indecomposable $k\ltimes V$-modules with Goldie dimesion $\geq 3dim_kV-2$, nevertheless, the author does not know exactly when this happens. In the next section we formulate a few conjectures in this direction.

\section{Indecomposability in larger dimensions}

\noindent In this section we generalize the constructions performed in the course of the proof of (3.1). We present two related conjectures and we prove that these conjectures imply a generalized version of (3.1). We generalize the construction performed in the course of the proof of (3.1) as follows:

Let $k$ be a field and $V$ be a finite dimensional $k$-vector space. We will say that a, possibly empty, finite sequence of non-negative integers, $s=\left\{i_1,...,i_m\right\}$, is admissible, if $1\leq i_j\leq dim_kV-1$ and if $i_j\leq i_{j-1}$ for all $1\leq j\leq m$ in the case where the field $k$ has characteristic $\neq 2$. If $k$ has characteristic $2$, we further assume that all terms of an admissible sequence $s=\left\{i_1,...,i_m\right\}$ satisfy the inequality $\left\lfloor dim_kV/2\right\rfloor<i_j$.

\noindent For each admissible sequence $s$, we define the $k\ltimes V$-module $M_s$, recursively, as follows: 

\begin{enumerate}
\item We set $M_\emptyset=k\ltimes V$.
\item Suppose the module $M_s$ has been defined. Let $i_{m+1}$ be such that the sequence $s\cup\left\{i_{m+1}\right\}$ is admissible. We set

\[M_{s\cup\left\{i_{m+1}\right\}}=(M_s\oplus k\ltimes V)/Soc(M_s)(i_{m+1})\]

\end{enumerate}

\noindent Observe that for every $1\leq i \leq dim_kV-1$, $M_{\left\{i\right\}}$ is equal to $M_i$ and $M_{\left\{dim_kV-1,i\right\}}$ is equal to $M_{n-1,i}$ as they were defined in the course of the proof of (3.1). Doing induction on the length $m$ of the admissible sequence $s=\left\{i_1,...,i_m\right\}$ it is easily seen that

\begin{enumerate}
\item $dim_kSoc(M_s)=dim_kV+\sum_{j=1}^mi_j$ 
\item $(Soc(k\ltimes V)\ast M_{s\cup\left\{i_{m+1}\right\}})\cap Soc(M_s)(i_{m+1})=\left\{0\right\}$, for all $ i_{m+1}\leq i_m$.
\end{enumerate}

\noindent Observe that from 1 the identity
\[GdimM_s=dim_kV+\sum_{j=1}^mi_j\]

\noindent follows. Observe also that from 2, together with (3.2) and (3.3) it follows that

\[M_s\in\mathfrak{D}(Soc(k\ltimes V))\]

\noindent for every admissible sequence $s$. 

\noindent Given a, not necessarely finite, sequence $s$, with at least $m$ terms, and $n\leq m$, we denote by $s_n$ the $n$-th truncation of $s$. It follows, again, from 2 above, that for every admissible sequence $s$ of length $m$, and for every $n\leq m$, the module $M_{s_n}$ can be identified, up to isomorphisms, with a $Soc(k\ltimes V)$-pure submodule of $M_s$. We assume always that this identification has been performed.

\noindent Now, for every sequence $s=\left\{i_1,...,i_m\right\}$, of length $m$, let $\alpha_s\in E_0^{dim_kV+\sum_{j=1}^mi_j}$ be such that 

\[v\alpha_s=(0_{\sum_{j=1}^mi_j},v)\]

\noindent for every $v\in V$. We now define, recursively, for each admissible sequence $s=\left\{i_1,...,i_m\right\}$, the subset $\Sigma_s$ of $E_0^{dim_kV+\sum_{j=1}^mi_j}$ as follows:

\begin{enumerate}
\item We make $\Sigma_\emptyset$ to be $\left\{\alpha_\emptyset\right\}$.
\item Suppose the set $\Sigma_s$ has been defined. Let $i_{m+1}$ be such that the sequence $s\cup\left\{i_{m+1}\right\}$ is admissible. We make $\Sigma_{s\cup\left\{i_{m+1}\right\}}$ to be the set 

\[\Sigma_s\times \left\{0_{i_{m+1}}\right\}\cup\left\{\alpha_{s\cup\left\{i_{m+1}\right\}}\right\}\]

\end{enumerate}

\noindent It is easily seen that both the module $M_s$ defined above and the submodule of $E_0^{dim_kV+\sum_{j=1}^mi_j}$, generated by $\Sigma_s$ have identical presentations, and thus are isomorphic. Observe that the set $\Sigma_s$ associated to the sequence $s=\left\{i\right\}$ is equal to the set of generators of $M_i$ defined in the course of the proof of (3.1), and that the set $\Sigma_s$ associated to the sequence $s=\left\{n-1,i\right\}$ is equal to the set of generators of $M_{n-1,i}$ defined in the course of the proof of (3.1).

\noindent The vertex set $\left|\Gamma_{Soc(k\ltimes V)}(M_s)\right|$ of the $Soc(k\ltimes V)$-graph of cyclic modules, $\Gamma_{Soc(k\ltimes V)}(M_s)$, of $M_s$, is equal to the set, $\tilde{\Sigma_s}$, of sums of different elements of $\Sigma_s$. This observation, together with the recursive construction of the sets $\Sigma_s$ yields, after the computation of all systems of linear equations of the form 

\[va=vb, \ v\in V\setminus\left\{0\right\}\]

\noindent with $a,b\in \tilde{\Sigma_s}$, a recursive description of the $Soc(k\ltimes V)$-graph of cyclic modules of $M_s$.

\noindent Given an infinite sequence $s$, we say that $s$ is admissible if all its truncations are admissible. We conjecture the following.

\begin{conj}
Let $k$ be a field. Let $V$ be a finite dimensional $k$-vector space. If $dim_kV$ is sufficiently large, then there exist, an integer $\kappa_V$, an infinite admissible sequence $s$, and an infinite subsequence $t$ of $s$ such that $cdim M_{t_m}\leq \kappa_V$ for every $m$. 
\end{conj}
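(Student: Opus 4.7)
The plan is to combine the recursive description of $\Gamma_{Soc(k\ltimes V)}(M_s)$ developed in this section with a compactness/pigeonhole argument on the finitely branching tree of admissible extensions. Concretely, write $n = dim_kV$, fix a constant $\kappa_V$ to be determined, and aim to construct $s$ together with the subsequence $t$ by building an infinite branch in the tree whose nodes are admissible sequences and whose edges $s \to s \cup \{i_{m+1}\}$ are labeled by the admissible choices $i_{m+1} \in \{1,\ldots,n-1\}$ (subject to the characteristic-$2$ restriction $\lfloor n/2\rfloor < i_{m+1}$ and the characteristic-$\neq 2$ monotonicity $i_{m+1}\leq i_m$). At each node $s$ we retain the value $cdim_{Soc(k\ltimes V)}M_s$, which is well-defined by the observation following the definition of $\Sigma_s$, and we discard any extension whose cdim exceeds $\kappa_V$.

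The first technical step is a local growth analysis. Using the recursion $\Sigma_{s\cup\{i_{m+1}\}} = \Sigma_s\times\{0_{i_{m+1}}\}\cup\{\alpha_{s\cup\{i_{m+1}\}}\}$ and the description of the vertex set as $\tilde{\Sigma}_s$, the graph $\Gamma_{Soc(k\ltimes V)}(M_{s\cup\{i_{m+1}\}})$ is obtained from (a complete subgraph copy of) $\Gamma_{Soc(k\ltimes V)}(M_s)$ by adjoining, for each $\beta\in\tilde{\Sigma}_s\cup\{0\}$, a new vertex $R(\beta+\alpha_{s\cup\{i_{m+1}\}})$; adjacencies among the new vertices and with the old ones are governed by the solvability of systems of the form $va=vb$, $v\in V\setminus\{0\}$, with $a,b\in\tilde{\Sigma}_{s\cup\{i_{m+1}\}}$. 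The analogues of the three explicit linear systems exhibited in the proof of (3.1) should provide, for each admissible $i_{m+1}$, an explicit set of edges from new vertices into existing components. I would prove a growth lemma stating that the number of new connected components created at each step is bounded by a constant $C(n)$ independent of $m$ and of $s$.

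With such a growth lemma the proof would finish as follows: since only finitely many values of $i_{m+1}$ are admissible at each step, the subtree of sequences satisfying $cdim\, M_{s_m}\leq \kappa_V$ is finitely branching; if it is also infinite, König's lemma produces an infinite branch, which yields the desired sequence $s$ (with $t$ equal to $s$ itself, or to any subsequence along which cdim is constant by a further pigeonhole on values in $\{1,\ldots,\kappa_V\}$). Infinity of the subtree is forced by iterating the growth lemma: if $\kappa_V$ is chosen larger than $cdim\, M_\emptyset + C(n)\cdot m_0$ for some threshold $m_0$, the tree has depth at least $m_0$ and one can argue by a diagonal extraction across all $m_0$ to obtain an infinite branch.

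The main obstacle is the growth lemma itself. The adjacency systems $va=vb$ depend on the entire history $(i_1,\ldots,i_m)$ in a nontrivial blockwise way, and as soon as $m\geq 3$ their solvability conditions become substantially more intricate than the one-row perturbations handled for $M_i$ and $M_{n-1,i}$ in the proof of (3.1). In characteristic $2$ the constraint $i_j>\lfloor n/2\rfloor$ further narrows the pool of choices that can be used to force the new vertex $R\alpha_{s\cup\{i_{m+1}\}}$ to attach to many existing components. I expect that for $dim_kV$ sufficiently large one can choose $i_{m+1}$ close to $n-1$ to make the relevant linear systems uniformly solvable, yielding a constant $C(n)$ and hence a finite $\kappa_V$; but the explicit verification of this uniform solvability — precisely the content of the conjecture — is what I expect to be the most delicate part of the argument.
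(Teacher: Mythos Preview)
This statement is Conjecture~4.1 in the paper, and the paper does \emph{not} prove it: it is explicitly presented as an open conjecture. There is therefore no proof in the paper to compare your proposal against. The paper only uses the conjecture hypothetically, showing in Proposition~4.3 that a positive answer would yield indecomposable $k\ltimes V$-modules of arbitrarily large Goldie dimension, and in Section~6 that it would further yield such modules of infinite Goldie dimension.

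As for your proposal on its own terms, there is a genuine gap in the K\"onig step even if one grants your growth lemma. A bound of the form ``the number of new connected components created at each step is at most $C(n)$'' gives only $cdim\,M_{s_m}\leq 1 + C(n)\cdot m$, which is unbounded in $m$. Consequently, for any fixed $\kappa_V$ the subtree $\{s : cdim\,M_{s_j}\leq\kappa_V \text{ for all } j\}$ need not be infinite; your argument only shows it has depth $m_0$ when $\kappa_V\geq 1+C(n)\cdot m_0$. The diagonal extraction you invoke would then produce an infinite admissible sequence $s$, but along it one still has only $cdim\,M_{s_m}\leq 1+C(n)\cdot m$, and nothing forces the existence of a subsequence $t$ with $cdim\,M_{t_m}$ uniformly bounded. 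What the conjecture actually demands is much stronger than bounded per-step growth: one needs that along some branch the combinatorial dimension fails to increase infinitely often (equivalently, that at infinitely many stages some admissible choice of $i_{m+1}$ merges at least as many old components as it creates new ones). Your final paragraph correctly locates the hard linear-algebra content in the adjacency systems, but note that resolving it in the form you state would still not close the argument; you would need the sharper non-increase statement, not merely a $C(n)$ bound.
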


\begin{conj}
Let $k$ be a field. Let $V$ be a finite dimensional $k$-vector space. If $dim_kV$ is sufficently large, then there exist, an integer $\phi_V$, an infinite admissible sequence $s$, and an infinite subsequence $t$ of $s$ such that $M_{t_m}\in\mathfrak{F}(Soc(k\ltimes V))$ and such that $fcdim M_{t_m}\leq \phi_V$ for every $m$.
\end{conj}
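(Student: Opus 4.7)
The plan is to work with a specific infinite admissible sequence—the simplest candidate being the constant sequence $s=(n-1,n-1,n-1,\ldots)$, where $n=\dim_kV$—and to exhibit, along a suitable subsequence, a bounded fundamental subset in each $M_{s_m}$. This choice is motivated by the proof of Theorem 3.1: the case $M_{\{n-1,i\}}=M_{s_2}$ already satisfies $fcdim M_{s_2}=1$, as witnessed by the two-element set $\{\alpha_0^{n-1},\alpha_{n-1,i}\}$, so iterating this construction is a natural candidate for a sequence along which the fundamental combinatorial dimension stays bounded. In characteristic $2$ the constant value would instead be chosen greater than $\lfloor n/2\rfloor$ to respect admissibility.

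The first step is to derive a recursive description of $\Gamma_{Soc(k\ltimes V)}(M_{s_m})$ from the graph of $M_{s_{m-1}}$ via the recursion $\Sigma_{s_m}=\Sigma_{s_{m-1}}\times\{0_{i_m}\}\cup\{\alpha_{s_m}\}$. Each new vertex of $\tilde{\Sigma_{s_m}}$ is either the image of an old vertex or a sum involving $\alpha_{s_m}$, and adjacency amounts to the non-triviality of a linear system over $k$ of the same flavor as those displayed in the proof of Theorem 3.1. I would record, for each block of sums, whether the system admits nontrivial solutions, obtaining an inductive presentation of the graph. The result I would aim for is a structural lemma stating that adding $\alpha_{s_m}$ identifies (up to socle equality) every cyclic module in $M_{s_{m-1}}$ with at least one vertex in the connected component of $R\alpha_{s_m}$, modulo controlled exceptions.

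The second step is to identify a candidate fundamental subset consisting of $\alpha_\emptyset\in M_\emptyset\hookrightarrow M_{s_m}$, $\alpha_{s_m}$, and possibly finitely many intermediate generators $\alpha_{s_j}$ for a bounded set of indices $j$ independent of $m$. One must verify (i) that these elements generate $M_{s_m}$—which follows from the inclusion $\Sigma_{s_m}\supseteq$ iterated images of $\alpha_\emptyset$ together with the new generators—and (ii) that for each chosen element $x$, the submodule $\sum_{C\neq C_x^I}Ry$ is proper. The bound $\phi_V$ would then be the cardinality of this fixed set of components, depending only on $n=\dim_kV$ and characteristic.

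The main obstacle is that $\tilde{\Sigma_{s_m}}$ grows exponentially in $m$, so verifying that the complement of a chosen component is a proper submodule is delicate: one must rule out that a large sum of cyclic modules outside $C_x^I$ inadvertently reconstitutes $M_{s_m}$. I expect this to fail for infinitely many $m$ along the constant sequence itself, forcing the passage to a subsequence $t$; the role of $t$ is precisely to retain only those $m$ for which the governing linear systems behave regularly enough to keep the required connected components separated. Establishing this regularity—i.e., producing infinitely many $m$ along which the socle-intersection systems between the candidate generators' components and the remaining span exhibit a uniform pattern—is the heart of the argument, and will require careful case analysis by $\dim_kV$ and characteristic, analogous to, but substantially more intricate than, the $M_{n-1,i}$ computation in Theorem 3.1.
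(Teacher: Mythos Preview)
The statement you are attempting to prove is labeled a \emph{Conjecture} in the paper, and the paper contains no proof of it. The author explicitly presents it as open: after stating Conjectures 4.1 and 4.2, the paper only proves Proposition 4.3, which shows that either conjecture \emph{implies} the existence of indecomposable $k\ltimes V$-modules of arbitrarily large Goldie dimension. There is therefore no proof in the paper against which your proposal can be compared.

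As a research plan your outline is reasonable and well-motivated by the $M_{n-1,i}$ computation in the proof of Theorem 3.1, but it is not a proof. You correctly identify the genuine obstacle yourself: controlling the exponentially growing vertex set $\tilde{\Sigma}_{s_m}$ so as to verify that the complement of a chosen component generates a proper submodule, and you state that resolving this ``will require careful case analysis \ldots\ substantially more intricate than'' the existing computation. That is precisely the content of the conjecture, and your proposal does not supply it. In particular, no argument is given for the existence of the subsequence $t$ along which the socle-intersection systems behave uniformly; the expectation that such $m$ exist is asserted but not established. Until that step is carried out, the proposal remains a strategy rather than a proof.
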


\noindent Finally, the following proposition says that conjectures 4.1 and 4.2, imply the existence of indecomposable $k\ltimes V$-modules of arbitrarily large finite Goldie dimension.

\begin{prop}
Let $k$ be a field. Let $V$ be a finite dimensional $k$-vector space. Both conjectures 4.1 and 4.2 above imply that if $dim_kV$ is sufficently large, then, for each $n\geq 1$ there exists an indecomposable $k\ltimes V$-module $M$ such that $GdimM\geq n$. 
\end{prop}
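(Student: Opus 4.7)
The plan is to combine the Krull--Schmidt bounds of Section~2 with the additivity of Goldie dimension under finite direct sums. Assume first that Conjecture 4.1 holds, and fix $\kappa_V$, an infinite admissible sequence $s$, and an infinite subsequence $t$ with $cdim M_{t_m}\leq \kappa_V$ for every $m$. By Theorem 2.2 we have $KS\ell(M_{t_m})\leq \kappa_V$, and by Corollary 2.3(1) each $M_{t_m}$ admits a finite indecomposable direct sum decomposition
\[M_{t_m}=\bigoplus_{\ell=1}^{k_m}N_{m,\ell},\qquad k_m\leq \kappa_V.\]

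Next I would invoke the Goldie dimension formula noted after the recursive construction: writing $t_m=\{i_{j_1},\ldots,i_{j_m}\}$, one has $GdimM_{t_m}=dim_kV+\sum_{\ell=1}^m i_{j_\ell}$. Since every term of an admissible sequence satisfies $i_{j_\ell}\geq 1$ (and in fact $i_{j_\ell}>\left\lfloor dim_kV/2\right\rfloor$ in characteristic~$2$), we obtain $GdimM_{t_m}\geq dim_kV+m$, which tends to infinity with $m$. Goldie dimension is additive on finite direct sums, so
\[\sum_{\ell=1}^{k_m}GdimN_{m,\ell}=GdimM_{t_m}\geq dim_kV+m,\]
and the pigeonhole principle produces, for each $m$, an index $\ell_m$ with
\[GdimN_{m,\ell_m}\geq \frac{dim_kV+m}{\kappa_V}.\]
Given any $n\geq 1$, choose $m$ large enough so that the right-hand side exceeds $n$; then $N_{m,\ell_m}$ is an indecomposable $k\ltimes V$-module with $GdimN_{m,\ell_m}\geq n$, proving the proposition under Conjecture~4.1.

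For the version under Conjecture~4.2, the argument is literally the same after substituting Corollary~2.13 for Theorem~2.2: from $M_{t_m}\in\mathfrak{F}(Soc(k\ltimes V))$ and $fcdim M_{t_m}\leq \phi_V$ we get $KS\ell(M_{t_m})\leq \phi_V$ via Corollary 2.13, hence a decomposition into at most $\phi_V$ indecomposable summands, and the same growth-plus-pigeonhole step yields indecomposable summands of arbitrarily large Goldie dimension.

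I do not foresee any genuine obstacle: the two nontrivial ingredients, namely the Krull--Schmidt inequalities $KS\ell(M)\leq cdim M$ and $KS\ell(M)\leq fcdim M$, are already in hand from Section~2, and the additivity of Goldie dimension on finite direct sums is standard. The only point requiring a brief verification is that $GdimM_{t_m}\to\infty$ along the subsequence $t$, which follows immediately from the admissibility condition together with the explicit formula $GdimM_s=dim_kV+\sum_{j=1}^m i_j$.
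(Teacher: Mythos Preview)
Your argument is correct and matches the paper's proof essentially step for step: bound $KS\ell(M_{t_m})$ by $\kappa_V$ (resp.\ $\phi_V$) via Theorem~2.2 (resp.\ Theorem~2.11), use the formula $GdimM_{t_m}=\dim_kV+\sum i_{j_\ell}\to\infty$, and apply pigeonhole to a finite indecomposable decomposition. The only cosmetic slip is that the $KS\ell$ bound under Conjecture~4.2 comes from Theorem~2.11/Corollary~2.12 rather than Corollary~2.13.
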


\begin{proof}
Suppose conjecture 4.1 is true for $V$ with constant $\kappa_V$, infinite admissible sequence $s$, and infinite subsequence $t=\left\{i_1,i_2,...\right\}$. Let $n\geq 1$. Let $m$ be such that

\[n\kappa_V\leq dim_kV+\sum_{j=1}^mi_j\]

\noindent and such that $cdimM_{t_m}\leq \kappa_V$. Then, by the remarks made above $n\kappa_V\leq GdimM_{t_m}$. It follows, by (2.2) that $KS\ell(M_{t_m})\leq \kappa_V$. Let $M_{t_m}=\bigoplus_{i=1}^rM_i$ be an indecomposable direct sum decomposition of $M_{t_m}$. Then $r\leq \kappa_V$ and since 

\[n\kappa_V\leq GdimM_{t_m}=\sum_{j=1}^rGdimM_i\]

\noindent there exists $1\leq j\leq r$ such that $Gdim M_j\geq n$. Thus $M_j$ is a $k\ltimes V$-indecomposable module of Goldie dimension $\geq n$. The proof that conjecture 4.2 implies the claim is analogous. This concludes the proof.
\end{proof}

\section{Categorification}

\noindent In this section we present a functorial reformulation of the concept of graph of cyclic modules. We do this by defining families of concrete functors between suitable concrete categories in such a way that these constructions generalize the constructions made in section 2. We begin by setting our source and target categories. 

\noindent We define the category $\mathcal{C}$ as follows:

\begin{enumerate}
\item The class of objects $Ob_{\mathcal{C}}$ of $\mathcal{C}$ will be the class of all triples $(M,\Sigma,\Sigma')$ where $M$ is a module, $\Sigma$ is a subset of $M\setminus\left\{0\right\}$ and $\Sigma'$ is a subset of $\Sigma$.
\item Given two objects $(M,\Sigma,\Sigma')$ and $(N,\Lambda,\Lambda')$ in $\mathcal{C}$, the set of $\mathcal{C}$-morphisms $\Hom _{\mathcal{C}}((M,\Sigma,\Sigma'),(N,\Lambda,\Lambda'))$ will be the set of all $f\in \Hom_R(M,N)$ such that $f(\Sigma)\subseteq \Lambda$, and $f(\Sigma')\subseteq \Lambda'$.
\end{enumerate}

\noindent We make $\mathcal{C}$ into a concrete category over $R$-Mod with the obvious choice of forgetful functor. The category $\mathcal{C}$ will be the source category of our construction.

\noindent Now, denote by $G$ the category whose objects are graphs, with exactly one loop on each vertex, and their morphisms. We define the category $\mathcal{G}$ as follows:

\begin{enumerate}
\item The class of objects $Ob_{\mathcal{G}}$ of $\mathcal{G}$ will be the class of all pairs $(\Gamma,\Delta)$, where $\Gamma$ is an object in $G$, and $\Delta$ is a subset of the vertex set, $\left|\Gamma\right|$ of $\Gamma$.
\item Given two objects $(\Gamma,\Delta)$ and $(\Gamma',\Delta')$ of $\mathcal{G}$, the set $\Hom_{\mathcal{G}}((\Gamma,\Delta),(\Gamma',\Delta'))$ will be the set of all $f\in \Hom_G(\Gamma,\Gamma')$ such that $f(\Delta)\subseteq \Delta'$.
\end{enumerate}

\noindent We make $\mathcal{G}$ into a concrete category over $G$ with the obvious choice of forgetful functor. The category $\mathcal{G}$ will be the target of our construction.

\noindent Now, given an ideal $I$, we define the functor $\Gamma_I:\mathcal{C}\rightarrow \mathcal{G}$ as follows:
\begin{enumerate}
\item Let $(M,\Sigma,\Sigma')$ be an object in $\mathcal{C}$. We make $\Gamma_I(M,\Sigma,\Sigma')$ to be equal to $(\Gamma_I(M,\Sigma),\bigcup_{a\in \Sigma'}C_a^I)$.
\item Given two objects $(M,\Sigma,\Sigma')$ and $(N,\Lambda,\Lambda')$ in $\mathcal{C}$ and a $\mathcal{C}$-morphism $f\in \Hom_{\mathcal{C}}((M,\Sigma,\Sigma'),(N,\Lambda,\Lambda'))$, we make

\[\Gamma_I(f):\left|\Gamma_I(M,\Sigma,\Sigma')\right|\rightarrow\left|\Gamma_I(N,\Lambda,\Lambda')\right|\]

to be such that $\Gamma_I(f)(Ra)=Rf(a)$ for all $a\in \Sigma$, and we extend $\Gamma_I(f)$ to a morphism in $\mathcal{G}$
\end{enumerate}

\noindent Observe that $\Gamma_I$ thus defined is a well defined concrete functor between concrete categories $\mathcal{C}$ and $\mathcal{G}$. Observe also that the image of a triple of the form $(M,\Sigma,\emptyset)$ under $\Gamma_I$ can be naturally identified with the $I$-graph of cyclic modules, $\Gamma_I(M,\Sigma)$, of $\Sigma$, in $M$, studied in sections 2 and 3. We thus regard the collection of all functors $\Gamma_I$ as a categorification of the concept of graph of cyclic modules. 

\noindent We now proceed to the study of categories $\mathcal{C}$ and $\mathcal{G}$. The following proposition says that $C$ is a concretely complete category admitting concrete coproducts and concrete direct limits.

\begin{lem}
$\mathcal{C}$ is a concretely complete category. Further, $\mathcal{C}$ admits concrete coproducts and concrete direct limits.
\end{lem}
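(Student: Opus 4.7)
The plan is to construct each required limit, coproduct, and filtered colimit by first building the underlying module in $R$-Mod and then decorating the result with the natural choices of $\Sigma$ and $\Sigma'$, arranged so that the forgetful functor $U\colon\mathcal{C}\to R$-Mod preserves the construction. Since every small limit can be assembled from products and equalizers, I would treat these two first, and then turn to coproducts and direct limits separately.

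For a family $\{(M_\alpha,\Sigma_\alpha,\Sigma'_\alpha)\}_{\alpha\in A}$ I would take as product the triple $(\prod_\alpha M_\alpha,\,\prod_\alpha\Sigma_\alpha,\,\prod_\alpha\Sigma'_\alpha)$, viewing the Cartesian products of subsets as sitting inside $\prod_\alpha M_\alpha$; because each $\Sigma_\alpha$ avoids $0$, the product also avoids $0$, so the data lies in $\mathcal{C}$. For the equalizer of $f,g\colon(M,\Sigma,\Sigma')\rightrightarrows(N,\Lambda,\Lambda')$ I take $K=\{a\in M:f(a)=g(a)\}$ with distinguished subsets $K\cap\Sigma$ and $K\cap\Sigma'$. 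In each case the universal property reduces to the corresponding one in $R$-Mod together with a one-line check that a module map into the candidate respects the distinguished subsets if and only if its components do; this yields all small limits and shows $U$ preserves them, so $\mathcal{C}$ is concretely complete.

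For coproducts, given $\{(M_\alpha,\Sigma_\alpha,\Sigma'_\alpha)\}_{\alpha\in A}$ with coproduct inclusions $\iota_\alpha\colon M_\alpha\to\bigoplus_\alpha M_\alpha$, I propose the triple $(\bigoplus_\alpha M_\alpha,\,\bigcup_\alpha\iota_\alpha(\Sigma_\alpha),\,\bigcup_\alpha\iota_\alpha(\Sigma'_\alpha))$; injectivity of the $\iota_\alpha$ keeps the distinguished sets inside $\bigoplus_\alpha M_\alpha\setminus\{0\}$, and the universal property is immediate from the module case. For a directed system $\{(M_i,\Sigma_i,\Sigma'_i)\}_i$ with direct limit $M=\varinjlim M_i$ and canonical maps $\phi_i\colon M_i\to M$, I take
\[
\Bigl(M,\,\bigl(\bigcup_i\phi_i(\Sigma_i)\bigr)\setminus\{0\},\,\bigl(\bigcup_i\phi_i(\Sigma'_i)\bigr)\setminus\{0\}\Bigr).
\]

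The step I expect to be the main obstacle is verifying the universal property of the direct limit: since the canonical maps $\phi_i$ need not be injective, elements of $\Sigma_i$ can collapse to $0$ in $M$, and excising $0$ by hand is required to stay inside $\mathcal{C}$. I would check that this excision is harmless as follows: for any cocone $g_i\colon(M_i,\Sigma_i,\Sigma'_i)\to(N,\Lambda,\Lambda')$, the induced module map $g\colon M\to N$ with $g\circ\phi_i=g_i$ sends each nonzero $\phi_i(a)$ with $a\in\Sigma_i$ into $\Lambda$ because $g_i(a)\in\Lambda$, and analogously for the primed subsets; uniqueness of $g$ is inherited from the $R$-Mod universal property. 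This completes the verification that the proposed construction is a concrete direct limit.
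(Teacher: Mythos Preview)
Your argument is correct and follows exactly the paper's construction: products, equalizers, coproducts, and direct limits are each built as the underlying $R$-Mod (co)limit decorated with the obvious distinguished subsets, and the universal property is inherited from $R$-Mod after the one-line subset check. One remark: the excision of $0$ you single out as ``the main obstacle'' in the direct-limit case is in fact unnecessary, since the transition maps are $\mathcal{C}$-morphisms and hence carry $\Sigma_i$ into $\Sigma_j\subseteq M_j\setminus\{0\}$ for every $j\geq i$, so no element of any $\Sigma_i$ can die in the colimit; the paper accordingly takes the union without removing anything.
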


\begin{proof}
We prove first that $\mathcal{C}$ is concretely complete. We prove that $\mathcal{C}$ admits concrete products over $R$-Mod. Let $\left\{(M_\alpha,\Sigma_\alpha,\Sigma'_\alpha):\alpha\in A\right\}$ be a collection of objects in $\mathcal{C}$. Let $\pi_\alpha$ denote the cannonical projection of $\prod_{\alpha\in A} M_\alpha$ onto $M_\alpha$ for each $\alpha\in A$. We prove that the triple

\[(\prod_{\alpha\in A}M_\alpha,\prod_{\alpha\in A}\Sigma_\alpha,\prod_{\alpha\in A}\Sigma'_\alpha)\]

\noindent together with projections $\pi_\alpha$, $\alpha\in A$, is a product, in $\mathcal{C}$, for the collection $\left\{(M_\alpha,\Sigma_\alpha,\Sigma'_\alpha):\alpha\in A\right\}$. Observe first that for each $\alpha\in A$, projection $\pi_\alpha$ is a morphism in $\mathcal{C}$. Now, let $(N,\Lambda,\Lambda')$ be an object in $\mathcal{C}$ and let, for each $\alpha\in A$, $\mu_\alpha:(N,\Lambda,\Lambda')\rightarrow (M_\alpha,\Sigma_\alpha,\Sigma'_\alpha)$, be a morphism in $\mathcal{C}$. There exists a unique morphism $\mu:N\rightarrow\prod_{\alpha\in A}M_\alpha$, in $R$-Mod, such that $\mu_\alpha\mu=\pi_\alpha$ for all $\alpha\in A$. From this equations and from the fact that $\mu_\alpha(\Lambda)\subseteq \Sigma_\alpha$ and $\mu_\alpha(\Lambda')\subseteq \Sigma'_\alpha$ for all $\alpha\in A$ it follows that $\mu(\Lambda)\subseteq \prod_{\alpha\in A}\Sigma_\alpha$, and that $\mu(\Lambda')\subseteq \prod_{\alpha\in A}\Sigma'_\alpha$, that is, $\mu$ is a morphism in $\mathcal{C}$. We conclude that $\mathcal{C}$ admits concrete products.

We now prove that $\mathcal{C}$ admits concrete equalizers. Let $(M,\Sigma,\Sigma')$ and $(N,\Lambda,\Lambda')$ be two objects in $\mathcal{C}$, and let $f,g:(M.\Sigma,\Sigma')\rightarrow (N,\Lambda,\Lambda')$ be morphisms in $\mathcal{C}$. The module $K=ker(f-g)$, together with inclusion $\iota$ of $K$ in $M$, is a coequalizer for the pair $f,g$ in $R$-Mod. We prove that the triple

\[(K,K\cap\Sigma,K\cap\Sigma')\]

\noindent together with $\iota$, is a coequalizer for the pair $f,g$ in $\mathcal{C}$. Observe first that $\iota$ is a morphism in $\mathcal{C}$ such that $f\iota=g\iota$. Now, let $(L,\Delta,\Delta')$ be an object in $\mathcal{C}$ and $\mu:(L,\Delta,\Delta')\rightarrow (M,\Sigma,\Sigma')$ be a morphism in $\mathcal{C}$ such that $f\mu=g\mu$. Since $K$, together with $\iota$, is an equalizer, in $R$-Mod, of the pair $f,g$, there exists a unique morphism $\nu:L\rightarrow K$ such that $\iota\nu=\mu$. From this equation, together with the fact that $\mu(\Delta)\subseteq \Sigma$ and $\mu(\Delta')\subseteq \Sigma'$, it follows that $\nu(\Delta)\subseteq K\cap\Sigma$ and $\nu(\Delta')\subseteq K\cap \Sigma'$, that is, $\nu$ is a morphism in $\mathcal{C}$. We conclude that $\nu$ admits concrete equalizers. From this and from [1; 12.3] we conclude that $\mathcal{C}$ is concretely complete over $R$-Mod. 

We now prove that the concrete category $\mathcal{C}$ admits concrete coproducts. Let $\left\{(M_\alpha,\Sigma_\alpha,\Sigma'_\alpha):\alpha\in A\right\}$ be a collection of objects in $\mathcal{C}$. Let $\iota_\alpha$ denote the cannonical inclusion of $M_\alpha$ into $\bigoplus_{\alpha\in A}M_\alpha$ for each $\alpha\in A$. We prove that the triple

\[(\bigoplus_{\alpha\in A}M_\alpha,\bigcup_{\alpha\in A}\iota_\alpha(\Sigma_\alpha),\bigcup_{\alpha\in A}\iota_\alpha(\Sigma'_\alpha))\]

\noindent together with inclusions $\iota_\alpha$ is a coproduct, in $\mathcal{C}$, for the family of objects $\left\{(M_\alpha,\Sigma_\alpha,\Sigma'_\alpha):\alpha\in A\right\}$. Observe that $\iota_\alpha$ is a morphism in $\mathcal{C}$ for all $\alpha\in A$. Now, let $(N,\Lambda,\Lambda')$ be an object in $\mathcal{C}$ and $\mu_\alpha:(M_\alpha,\Sigma_\alpha,\Sigma'_\alpha)\rightarrow(N,\Lambda,\Lambda')$, $\alpha\in A$, be morphisms in $\mathcal{C}$. There exists a unique morphism, in $R$-Mod, $\iota:\bigoplus_{\alpha\in A}M_\alpha\rightarrow N$ such that $\iota\iota_\alpha=\mu_\alpha$ for all $\alpha\in A$. It follows, form these equations, together with the fact that $\mu_\alpha(\Sigma_\alpha)\subseteq \Lambda$ and $\mu_\alpha(\Sigma'_\alpha)\subseteq \Lambda'$ for all $\alpha\in A$, that $\iota(\bigcup_{\alpha\in A}\iota_\alpha(\Sigma_\alpha))\subseteq \Lambda$ and that $\iota(\bigcup_{\alpha\in A}\iota_\alpha(\Sigma'_\alpha))\subseteq \Lambda'$, that is, $\iota$ is a morphism in $\mathcal{C}$. We conclude that $\mathcal{C}$ admits concrete coproducts.

Finally, we prove that every direct system in $\mathcal{C}$ admits concrete direct limits in $\mathcal{C}$. Let $\left\{(M_\alpha,\Sigma_\alpha,\Sigma'_\alpha),\varphi_{\alpha,\beta}:\alpha,\beta\in A\right\}$ be a direct system in $\mathcal{C}$. We prove that the triple 

\[(\varinjlim M_\alpha,\bigcup_{\alpha\in A}\varphi_{\alpha,\infty}(\Sigma_\alpha),\bigcup_{\alpha\in A}\varphi_{\alpha,\infty}(\Sigma'_\alpha))\]

\noindent together with morphisms $\varphi_{\alpha,\infty}$ is a direct limit, in $\mathcal{C}$, for the direct system $\left\{(M_\alpha,\Sigma_\alpha,\Sigma'_\alpha),\varphi_{\alpha,\beta}:\alpha,\beta\in A\right\}$. Observe first that morphism $\varphi_{\alpha,\infty}$ is a morphism in $\mathcal{C}$ for every $\alpha\in A$. Now, let $(N,\Lambda,\Lambda')$ be an object in $\mathcal{C}$ and $\mu_\alpha:(M_\alpha,\Sigma_\alpha,\Sigma'_\alpha)\rightarrow(N,\Lambda,\Lambda')$, $\alpha\in A$, be morphisms in $\mathcal{C}$ such that $\mu_\alpha=\mu_\beta\varphi_{\alpha,\beta}$ for all $\alpha,\beta\in A$ such that $\alpha\leq\beta$. There exists a unique morphism $\mu:\varinjlim M_\alpha\rightarrow N$ in $R$-Mod such that $\mu\varphi_{\alpha,\infty}=\mu_\alpha$ for all $\alpha\in A$. From these equations and from the fact that $\mu_\alpha(\Sigma_\alpha)\subseteq \Lambda_\alpha$ and $\mu_\alpha(\Sigma'_\alpha)\subseteq \Lambda'$ for all $\alpha\in A$ it follows that $\mu(\bigcup_{\alpha\in A}\varphi_{\alpha,\infty}(\Sigma_\alpha))\subseteq \Lambda$ and that $\mu(\bigcup_{\alpha\in A}\varphi_{\alpha,\infty}(\Sigma'_\alpha))\subseteq \Lambda'$, that is, $\mu$ is a morphism in $\mathcal{C}$. We conclude that $\mathcal{C}$ admits concrete direct limits. This concludes the proof.
\end{proof}

\noindent Given a nonempty collection of objects $\left\{(M_\alpha,\Sigma_\alpha,\Sigma'_\alpha):\alpha\in A\right\}$, in $\mathcal{C}$, we will denote by

\[\prod_{\alpha\in A}(M_\alpha,\Sigma_\alpha,\Sigma'_\alpha) \ \mbox{and} \ \coprod_{\alpha\in A}(M_\alpha,\Sigma_\alpha,\Sigma'_\alpha)\]

\noindent the product and coproduct respectively, in $\mathcal{C}$, of $\left\{(M_\alpha,\Sigma_\alpha,\Sigma'_\alpha):\alpha\in A\right\}$. Given a direct system $\left\{(M_\alpha,\Sigma_\alpha,\Sigma'_\alpha),\varphi_{\alpha,\beta}:\alpha,\beta\in A\right\}$ in $\mathcal{C}$, we will denote by

\[\varinjlim (M_\alpha,\Sigma_\alpha,\Sigma'_\alpha)\]

\noindent the direct limit, in $\mathcal{C}$, of $\left\{(M_\alpha,\Sigma_\alpha,\Sigma'_\alpha),\varphi_{\alpha,\beta}:\alpha,\beta\in A\right\}$.
Observe that in general, not every pair of morphisms in $\mathcal{C}$ admits concrete coequalizer in $\mathcal{C}$. To see this observe that if $k$ is a field of characteristic $\neq 2$, then $\left\{0\right\}$ is coequalizer, in $k$-Mod, of the pair $id_k,-id_k$. Thus, in this case, the pair $id_k,-id_k$ considered as endomorphisms of $(k,k\setminus\left\{0\right\},\emptyset)$, in $\mathcal{C}$, does not admit concrete coequalizer in $\mathcal{C}$.

\begin{lem}
$\mathcal{G}$ is concretely complete and concretely cocomplete.
\end{lem}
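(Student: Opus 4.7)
The plan is to mirror the proof of Lemma 5.1, exploiting that the forgetful functor $\mathcal{G}\to G$ is defined in a formally identical way to $\mathcal{C}\to R$-Mod, and that the base category $G$ of graphs with a loop on each vertex is itself both complete and cocomplete. By the standard reduction [1; 12.3], concrete completeness of $\mathcal{G}$ over $G$ reduces to the existence of concrete products and concrete equalizers, and concrete cocompleteness reduces to concrete coproducts and concrete coequalizers. In each case, the construction on the underlying graph is the expected one, and the only additional task is to prescribe the distinguished vertex subset correctly and to verify that the mediating morphism of the universal property is a $\mathcal{G}$-morphism.

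First I would verify concrete products and equalizers. Given a family $\{(\Gamma_\alpha,\Delta_\alpha):\alpha\in A\}$ in $\mathcal{G}$, I would take the product $\prod_{\alpha\in A}\Gamma_\alpha$ in $G$ equipped with the distinguished subset $\prod_{\alpha\in A}\Delta_\alpha$; for a parallel pair $f,g:(\Gamma,\Delta)\to (\Gamma',\Delta')$ I would take the equalizer $K$ in $G$ equipped with $K\cap\Delta$. The fact that the unique mediating morphism supplied by $G$ preserves the distinguished subsets follows from exactly the same reasoning used for $\mathcal{C}$: the competing morphisms are assumed to send their distinguished subsets into $\Delta_\alpha$ (resp.\ $\Delta$), which forces the mediator into $\prod_{\alpha\in A}\Delta_\alpha$ (resp.\ $K\cap\Delta$). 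For coproducts and coequalizers I would take the disjoint union $\coprod_{\alpha\in A}\Gamma_\alpha$ equipped with $\bigcup_{\alpha\in A}\iota_\alpha(\Delta_\alpha)$, and, for $f,g:(\Gamma,\Delta)\to (\Gamma',\Delta')$, the coequalizer $q:\Gamma'\to Q$ in $G$ equipped with $q(\Delta')$; the verification of the $\mathcal{G}$-universal properties is again a direct translation of the corresponding arguments in Lemma 5.1.

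The main point of care is that coequalizers in $G$ are well-behaved with respect to the one-loop-per-vertex convention, i.e., the quotient graph obtained by collapsing vertices still carries a loop on each of its vertices; this is immediate because every object of $G$ is required to carry such loops, so they are added automatically if missing after passing to equivalence classes. Once this is noted, no new obstacle appears beyond what was handled in Lemma 5.1, and in fact the proof is shorter since objects of $\mathcal{G}$ carry only one distinguished subset rather than two.
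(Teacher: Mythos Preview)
Your proposal is correct and follows essentially the same route as the paper: reduce via [1; 12.3] to concrete products/equalizers and concrete coproducts/coequalizers, and in each case equip the corresponding (co)limit in $G$ with the distinguished subset $\prod_\alpha\Delta_\alpha$, $|K|\cap\Delta$, $\bigcup_\alpha\iota_\alpha(\Delta_\alpha)$, and $q(\Delta')$ respectively. The only cosmetic difference is that the paper spells out the explicit constructions of products, equalizers, coproducts and coequalizers in $G$, whereas you invoke completeness and cocompleteness of $G$ as known; your remark about loops surviving the coequalizer construction addresses the one point where this shortcut needs justification.
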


\begin{proof}
We prove first that $\mathcal{G}$ is concretely complete. We prove that $\mathcal{G}$ admits concrete products. Let $\left\{(\Gamma_\alpha,\Delta_\alpha):\alpha\in A\right\}$ be a collection of objects in $\mathcal{G}$. Denote by $\prod_{\alpha\in A}\Gamma_\alpha$, the graph defined as follows:

\begin{enumerate}
\item $\left|\prod_{\alpha\in A}\Gamma_\alpha\right|=\prod_{\alpha\in A}\left|\Gamma_\alpha\right|$.
\item Given $a,b\in\prod_{\alpha\in A}\left|\Gamma_\alpha\right|$, $a$ and $b$ are adjacent in $\prod_{\alpha\in A}\Gamma_\alpha$ if $\pi_\alpha(a)$ and $\pi_\alpha(b)$ are adjacent in $\Gamma_\alpha$ for all $\alpha\in A$, where $\pi_\alpha$ denotes the cannonical projection of $\prod_{\alpha\in A}\left|\Gamma_\alpha\right|$ onto $\left|\Gamma_\alpha\right|$ for all $\alpha\in A$
\end{enumerate}

\noindent It is easily seen that the graph $\prod_{\alpha\in A}\Gamma_\alpha$, together with projections $\pi_\alpha$, $\alpha\in A$, is a product, in $G$, for the family $\left\{\Gamma_\alpha:\alpha\in A\right\}$. We prove that the pair

\[(\prod_{\alpha\in A}\Gamma_\alpha,\prod_{\alpha\in A}\Delta_\alpha)\]

\noindent together with projections $\pi_\alpha$, $\alpha\in A$, is a product, in $\mathcal{G}$, for the family $\left\{(\Gamma_\alpha,\Delta_\alpha):\alpha\in A\right\}$. Observe first that $\pi_\alpha$ is a morphism in $\mathcal{G}$ for all $\alpha\in A$. Now, let $(\Gamma',\Delta')$ be an object in $\mathcal{G}$ and let $\mu_\alpha:(\Gamma',\Delta')\rightarrow (\Gamma_\alpha,\Delta_\alpha)$, $\alpha\in A$, be morphisms in $\mathcal{G}$. Since $\prod_{\alpha\in A}\Gamma_\alpha$ is a product, in $G$, for the family $\left\{\Gamma_\alpha:\alpha\in A\right\}$, there exists a unique morphism $\mu:\Gamma'\rightarrow \prod_{\alpha\in A}\Gamma_\alpha$ in $G$ such that $\mu_\alpha=\pi_\alpha\mu$ for all $\alpha\in A$. From these equations and from the fact that $\mu_\alpha(\Delta')\subseteq \Delta_\alpha$ it follows that $\mu(\Delta')\subseteq \prod_{\alpha\in A}\Delta_\alpha$, that is, $\mu$ is a morphism in $\mathcal{G}$. We conclude that $\mathcal{G}$ admits concrete products.

We prove now that $\mathcal{G}$ admits concrete equalizers. Let $(\Gamma,\Delta)$ and $(\Gamma',\Delta')$ be two objects in $\mathcal{G}$. Let $f,g:(\Gamma,\Delta)\rightarrow (\Gamma',\Delta')$ be two morphisms in $\mathcal{G}$. Denote by $\Gamma_{f,g}$ the graph defined as follows:

\begin{enumerate}
\item $\left|\Gamma_{f,g}\right|=\left\{a\in \left|\Gamma\right|:f(a)=g(a)\right\}$
\item Let $a,b\in \left|\Gamma_{f,g}\right|$. Then $a$ and $b$ will be adjacent in $\Gamma_{f,g}$ if $a$ and $b$ are adjacent in $\Gamma$
\end{enumerate}

\noindent That is, $\Gamma_{f,g}$ is the subgraph of $\Gamma$ generated by the equalizer, in the category of sets and functions, of the pair $f,g$. It is easily seen that $\Gamma_{f,g}$, together with the cannonical inclusion, $\iota$, of $\Gamma_{f,g}$ into $\Gamma$, is an equalizer, in $G$, of the pair $f,g$. We prove that the pair

\[(\Gamma_{f,g},\Delta\cap \left|\Gamma_{f,g}\right|)\]
 
\noindent together with the inclusion $\iota$ is an equalizer, in $\mathcal{G}$, for the pair $f,g$. Observe first that $\iota$ is a morphism in $\mathcal{G}$. Let $(\Gamma'',\Delta'')$ be an object in $\mathcal{G}$. Let $\mu:(\Gamma'',\Delta'')\rightarrow (\Gamma,\Delta)$ be a morphism in $\mathcal{G}$ such that $f\mu=g\mu$. Since $\Gamma_{f,g}$, together with $\iota$, is a coequalizer, in $G$, for the pair $f,g$, then there exists a unique morphism, $\nu:\Gamma''\rightarrow \Gamma_{f,g}$ such that $\iota\nu=\mu$. Clearly $\nu(\Delta'')\subseteq \Delta\cap \left|\Gamma_{f,g}\right|$, that is, $\nu$ is a morphism in $\mathcal{G}$. We conclude that $\mathcal{G}$ admits concrete equalizers. By this and by [] we conclude that the category $\mathcal{G}$ is concretely complete.

We prove now that $\mathcal{G}$ is concretely cocomplete. We first prove that $\mathcal{G}$ admits concrete coproducts. Let $\left\{(\Gamma_\alpha,\Delta_\alpha):\alpha\in A\right\}$ be a collection of objects in $\mathcal{G}$. Denote by $\coprod_{\alpha\in A}\Gamma_\alpha$ the graph defined as follows:

\begin{enumerate}
\item $\left|\coprod_{\alpha\in A}\Gamma_\alpha\right|=\coprod_{\alpha\in A}\left|\Gamma_\alpha\right|$.
\item Let $a,b\in \coprod_{\alpha\in A}\left|\Gamma_\alpha\right|$. Then $a$ and $b$ will be adjacent in $\coprod_{\alpha\in A}\Gamma_\alpha$ if there exists $\alpha\in A$ such that $a$ and $b$ are adjacent in $\Gamma_\alpha$.
\end{enumerate}

\noindent where the coproduct symbol on the right hand side of 1 is taken in the category of sets and functions. Thus, $\coprod_{\alpha\in A}\Gamma_\alpha$ is defined by the disjoint union of vertex sets and adjacency relations of graphs in the collection $\left\{\Gamma_\alpha:\alpha\in A\right\}$. Denote, for each $\alpha\in A$, by $\iota_\alpha$, the cannonical inclusion of $\Gamma_\alpha$ into $\coprod_{\alpha\in A}\Gamma_\alpha$. It is easily seen that $\coprod_{\alpha\in A}\Gamma_\alpha$, togehter with cannonical inclusions $\iota_\alpha$, $\alpha\in A$, is a coproduct, in $\mathcal{G}$, of the collection $\left\{\Gamma_\alpha:\alpha\in A\right\}$. We prove that 

\[(\coprod_{\alpha\in A}\Gamma_\alpha,\bigcup_{\alpha\in A}\iota_\alpha(\Delta_\alpha))\]

\noindent together with morphisms $\iota_\alpha$, $\alpha\in A$, is a coproduct, in $\mathcal{G}$, of the collection $\left\{(\Gamma_\alpha,\Delta_\alpha):\alpha\in A\right\}$. Observe first that $\iota_\alpha$ is a morphism in $\mathcal{G}$ for all $\alpha\in A$. Now, let $(\Gamma',\Delta')$ be an object in $\mathcal{G}$. Let $\mu_\alpha:(\Gamma_\alpha,\Delta_\alpha)\rightarrow (\Gamma',\Delta')$, $\alpha\in A$, be morphisms in $\mathcal{G}$. Since $\coprod_{\alpha\in A}\Gamma_\alpha$, together with inclusions $\iota_\alpha$, $\alpha\in A$, is a coproduct, in $G$, for the family $\left\{(\Gamma_\alpha,\Delta_\alpha):\alpha\in A\right\}$, then there exists a unique morphism $\iota:\coprod_{\alpha\in A}\Gamma_\alpha\rightarrow \Gamma'$ in $G$ such that $\iota\iota_\alpha=\mu_\alpha$ for all $\alpha\in A$. It is easily seen, from these equations, that $\iota(\bigcup_{\alpha\in A}\iota_\alpha(\Delta_\alpha))\subseteq \Delta'$, that is, $\iota$ is a morphism in $\mathcal{G}$. We conclude that $\mathcal{G}$ admits concrete coproducts.

Finally, we prove that $\mathcal{G}$ admits concrete coequalizers. Again, let $(\Gamma,\Delta)$ and $(\Gamma',\Delta')$ be two objects in $\mathcal{G}$ and $f,g:(\Gamma,\Delta)\rightarrow(\Gamma',\Delta')$ be two morphisms in $\mathcal{G}$. Denote by $\Gamma^{f,g}$ the graph defined as follows:

\begin{enumerate}
\item $\left|\Gamma^{f,g}\right|$ will be the set of equivalence classes of $\left|\Gamma'\right|$ under the minimal equivalence relation identifying $f(a)$ with $g(a)$ for every $a\in \left|\Gamma\right|$.
\item Let $[a],[b]\in \left|\Gamma^{f,g}\right|$. Then $[a]$ and $[b]$ will be adjacent in $\Gamma^{f,g}$ is the exist $a'\in [a]$ and $b'\in [b]$ such that $a'$ and $b'$ are adjacent in $\Gamma'$.
\end{enumerate}

\noindent That is, $\Gamma^{f,g}$ is defined as the graph cogenerated, in $\Gamma'$, by the equivalence relation defining the set $\left|\Gamma^{f,g}\right|$. We will denote by $\pi$ the cannonical projection of $\Gamma'$ onto $\Gamma^{f,g}$. $\pi$ thus defined extends to a morphism in $G$ from $\Gamma'$ to $\Gamma^{f,g}$ such that $\pi f=\pi g$. It is easily seen the $\Gamma^{f,g}$, together with projection $\pi$, is a coequalizer for the pair $f,g$ in $G$. We will prove that the pair

\[(\Gamma^{f,g},\pi(\Delta'))\]

\noindent together with projection $\pi$, is a coequalizer, in $\mathcal{G}$, for the pair $f,g$. Observe first that clearly $\pi$ is a morphism in $\mathcal{G}$. Now, let $(\Gamma'',\Delta'')$ be an object in $\mathcal{G}$ and $\mu:(\Gamma',\Delta')\rightarrow (\Gamma'',\Delta'')$ be morphism in $\mathcal{G}$ such that $\mu f=\mu g$. Since $\Gamma^{f,g}$, together with projection $\pi$, is a coequalizer, in $G$, for the pair $f,g$, there exists a unique morphism $\nu:\Gamma^{f,g}\rightarrow \Gamma''$ such that $\nu\pi=\mu$. It follows, from this equation, that $\nu(\pi(\Delta'))\subseteq \mu(\Delta'')$, that is, $\nu$ is a morphism in $\mathcal{G}$. We conclude that $\mathcal{G}$ admits concrete coequalizers. Again by [1; 12.3] the category $\mathcal{G}$ is concretely cocomplete. This concludes the proof.

\end{proof}

\noindent Given a nonempty collection of objects $\left\{(\Gamma_\alpha,\Delta_\alpha):\alpha\in A\right\}$, in $\mathcal{G}$, we will denote by

\[\prod_{\alpha\in A}(\Gamma_\alpha,\Delta_\alpha) \ \mbox{and} \ \coprod_{\alpha\in A}(\Gamma_\alpha,\Delta_\alpha)\]

\noindent the product and the coproduct respectively, in $\mathcal{G}$, of $\left\{(\Gamma_\alpha,\Delta_\alpha):\alpha\in A\right\}$. Observe that it follows from (5.2) that in category $\mathcal{G}$ every direct system admits a direct limit. Given a direct system $\left\{(\Gamma_\alpha,\Delta_\alpha),\varphi_{\alpha,\beta}:\alpha,\beta\in A\right\}$ in $\mathcal{G}$, we will denote by

\[\varinjlim (\Gamma_\alpha,\Delta_\alpha)\]

\noindent the direct limit, in $\mathcal{G}$, of $\left\{(\Gamma_\alpha,\Delta_\alpha),\varphi_{\alpha,\beta}:\alpha,\beta\in A\right\}$. 

We now proceed to the study of functors of the form $\Gamma_I$. The next proposition says that functors of this type preserve direct limits of direct systems in $\mathcal{C}$. We first prove the following lemma. Recall first that the union, $\bigcup_{\alpha\in A}\Gamma_\alpha$, of a collection of graphs $\left\{\Gamma_\alpha:\alpha\in A\right\}$ is defined as the followin graph:
\begin{enumerate}
\item $\left|\bigcup_{\alpha\in A}\Gamma_\alpha\right|=\bigcup_{\alpha\in A}\left|\Gamma_\alpha\right|$.
\item Given $a,b\in \bigcup_{\alpha\in A}\left|\Gamma_\alpha\right|$, $a,b$ will be adjacent in $\bigcup_{\alpha\in A}\Gamma_\alpha$ if there exists $\alpha\in A$ such that $a,b\in \left|\Gamma_\alpha\right|$ and $a,b$ are adjacent in $\Gamma_\alpha$.
\end{enumerate}

\noindent Observe that for each $\alpha\in A$, the graph $\Gamma_\alpha$ is contained, as a subgraph, in $\bigcup_{\alpha\in A}\Gamma_\alpha$, and that $\bigcup_{\alpha\in A}\Gamma_\alpha$ is minimal with respect to this property.  

\begin{lem}
Let $I$ be an ideal. Let $M$ be a module. Let $\left\{M_\alpha:\alpha\in A\right\}$ be a collection of submodules of $M$. Let $\left\{\Sigma_\alpha:\alpha\in A\right\}$ be a collection of sets such that $\Sigma_\alpha\subseteq M_\alpha\setminus\left\{0\right\}$ for all $\alpha\in A$. If both sets $\left\{M_\alpha:\alpha\in A\right\}$ and $\left\{\Sigma_\alpha:\alpha\in A\right\}$ are directed with respect to the order given by inclusion in $M$, then 

\[\Gamma_I(\bigcup_{\alpha\in A}M_\alpha,\bigcup_{\alpha\in A}\Sigma_\alpha)=\bigcup_{\alpha\in A}\Gamma_I(M_\alpha,\Sigma_\alpha)\]
\end{lem}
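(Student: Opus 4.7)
The plan is to verify the claimed equality of graphs by checking the two defining data, vertex sets and adjacency, separately, and then appealing to directedness for the only nontrivial containment.

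First I would unpack the vertex sets. By definition $|\Gamma_I(\bigcup_{\alpha} M_\alpha, \bigcup_\alpha \Sigma_\alpha)|$ is the collection of cyclic submodules $Ra$ with $a \in \bigcup_\alpha \Sigma_\alpha$, and $|\bigcup_\alpha \Gamma_I(M_\alpha,\Sigma_\alpha)| = \bigcup_\alpha |\Gamma_I(M_\alpha,\Sigma_\alpha)|$ is the collection of cyclic submodules $Ra$ with $a \in \Sigma_\alpha$ for some $\alpha$. Since $\Sigma_\alpha \subseteq M_\alpha \subseteq \bigcup_\beta M_\beta$ for each $\alpha$, the two descriptions agree set-theoretically, so the vertex sets coincide.

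Next I would check adjacency. If $Ra$ and $Rb$ are adjacent in $\Gamma_I(M_\alpha,\Sigma_\alpha)$ for some $\alpha$, then $Ia \cap Ib \neq \{0\}$, so they are adjacent in $\Gamma_I(\bigcup_\alpha M_\alpha,\bigcup_\alpha \Sigma_\alpha)$; this handles the containment $\bigcup_\alpha \Gamma_I(M_\alpha,\Sigma_\alpha) \subseteq \Gamma_I(\bigcup_\alpha M_\alpha,\bigcup_\alpha \Sigma_\alpha)$. For the reverse, suppose $Ra$ and $Rb$ are adjacent in $\Gamma_I(\bigcup_\alpha M_\alpha, \bigcup_\alpha \Sigma_\alpha)$, with $a \in \Sigma_\alpha$ and $b \in \Sigma_\beta$, and $Ia \cap Ib \neq \{0\}$. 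Using directedness of $\{\Sigma_\gamma:\gamma\in A\}$, pick $\gamma \in A$ with $\Sigma_\alpha \cup \Sigma_\beta \subseteq \Sigma_\gamma$; then $a,b \in \Sigma_\gamma \subseteq M_\gamma\setminus\{0\}$, so $Ra$ and $Rb$ are vertices of $\Gamma_I(M_\gamma,\Sigma_\gamma)$, and since $Ia\cap Ib \neq \{0\}$ holds already, they are adjacent there. Hence the edge is produced in $\bigcup_\alpha \Gamma_I(M_\alpha,\Sigma_\alpha)$.

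The two sides therefore have the same vertex sets and the same adjacency relation, which gives the claimed equality of graphs. The only real obstacle is the reverse direction of the adjacency check, and this is precisely what the directedness hypothesis on $\{\Sigma_\alpha\}$ (used to put both generators $a$ and $b$ into a common $\Sigma_\gamma$) and on $\{M_\alpha\}$ (ensuring they live in a common ambient $M_\gamma$) is there to supply; without such a hypothesis one could have $a \in \Sigma_\alpha$, $b \in \Sigma_\beta$ with $Ia \cap Ib \neq\{0\}$ yet no single index witnessing the edge, and the equality would fail.
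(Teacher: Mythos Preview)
Your proposal is correct and follows essentially the same argument as the paper: verify that the vertex sets coincide by unwinding definitions, observe that each $\Gamma_I(M_\alpha,\Sigma_\alpha)$ sits as a subgraph of the left-hand side to get one containment of adjacency relations, and use directedness of $\{\Sigma_\alpha\}$ to place any adjacent pair $a,b$ into a common $\Sigma_\gamma$ for the reverse containment. The paper's proof is slightly terser (it simply asserts the existence of $\gamma$ with $a,b\in\Sigma_\gamma$ without naming the indices $\alpha,\beta$ first), but the logic is identical.
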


\begin{proof}
Let $\left\{M_\alpha:\alpha\in A\right\}$ be a collection of submodules of $M$ and let $\left\{\Sigma_\alpha:\alpha\in A\right\}$ be a collection of sets such that $\Sigma_\alpha\subseteq M_\alpha\setminus\left\{0\right\}$ for all $\alpha\in A$. Suppose both sets $\left\{M_\alpha:\alpha\in A\right\}$ and $\left\{\Sigma_\alpha:\alpha\in A\right\}$ are directed. Clearly 

\[\left|\Gamma_I(\bigcup_{\alpha\in A}M_\alpha,\bigcup_{\alpha\in A}\Sigma_\alpha)\right|=\left|\bigcup_{\alpha\in A}\Gamma_I(M_\alpha,\Sigma_\alpha)\right|\]

\noindent Thus we only need to prove that the corresponding adjacency relations are equal. Now, since $\Gamma_I(M_\alpha,\Sigma_\alpha)$ is a subgraph of $\Gamma_I(\bigcup_{\alpha\in A}M_\alpha,\bigcup_{\alpha\in A}\Sigma_\alpha)$ for every $\alpha\in A$, the adjacency relation in $\bigcup_{\alpha\in A}\Gamma_I(M_\alpha,\Sigma_\alpha)$ is contained in the adjacency relation of $\Gamma_I(\bigcup_{\alpha\in A}M_\alpha,\bigcup_{\alpha\in A}\Sigma_\alpha)$. Now, let $a,b\in\bigcup_{\alpha\in A}\Sigma_\alpha$. Suppose $Ra$ and $Rb$ are adjacent in $\Gamma_I(\bigcup_{\alpha\in A}M_\alpha,\bigcup_{\alpha\in A}\Sigma_\alpha)$. Let $\gamma\in A$ be such that $a,b\in\Sigma_\gamma$. Then $Ra$ and $Rb$ are adjacent vertices in $\Gamma_I(M_\gamma,\Sigma_\gamma)$. It follows that $Ra$ and $Rb$ are adjacent vertices of $\bigcup_{\alpha\in A}\Gamma_I(M_\alpha,\Sigma_\alpha)$. We conclude that both graphs under consideration are equal. This concludes the proof.
\end{proof}

\begin{prop}
Let $I$ be an ideal. Let $\left\{(M_\alpha,\Sigma_\alpha,\Sigma'_\alpha),\varphi_{\alpha,\beta}:\alpha,\beta\in A\right\}$ be a direct system in $\mathcal{C}$. Then 

\[\varinjlim \Gamma_I(M_\alpha,\Sigma_\alpha,\Sigma'_\alpha)=\Gamma_I(\varinjlim(M_\alpha,\Sigma_\alpha,\Sigma'_\alpha))\]
\end{prop}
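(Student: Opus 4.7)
The plan is to unpack both sides of the claimed equality using the explicit descriptions of direct limits provided in Lemmas 5.1 and 5.2, and then reduce the remaining identification to Lemma 5.3 together with a directedness argument for paths.

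First I would use Lemma 5.1 to replace $\varinjlim (M_\alpha, \Sigma_\alpha, \Sigma'_\alpha)$ with the explicit triple $(\varinjlim M_\alpha, \bigcup_{\alpha \in A} \varphi_{\alpha,\infty}(\Sigma_\alpha), \bigcup_{\alpha \in A} \varphi_{\alpha,\infty}(\Sigma'_\alpha))$. Applying the definition of $\Gamma_I$ on objects, the right-hand side of the claimed equality becomes
$$\Bigl(\Gamma_I\bigl(\varinjlim M_\alpha, \textstyle\bigcup_{\alpha} \varphi_{\alpha,\infty}(\Sigma_\alpha)\bigr), \bigcup_{b \in \bigcup_\alpha \varphi_{\alpha,\infty}(\Sigma'_\alpha)} C_b^I\Bigr).$$
Since $\{M_\alpha\}$ and $\{\Sigma_\alpha\}$ are directed under the $\varphi_{\alpha,\beta}$, the families $\{\varphi_{\alpha,\infty}(M_\alpha)\}$ and $\{\varphi_{\alpha,\infty}(\Sigma_\alpha)\}$ are directed by inclusion in $\varinjlim M_\alpha$, with unions equal to $\varinjlim M_\alpha$ and $\bigcup_\alpha \varphi_{\alpha,\infty}(\Sigma_\alpha)$ respectively. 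Lemma 5.3 then yields
$$\Gamma_I\bigl(\varinjlim M_\alpha, \textstyle\bigcup_\alpha \varphi_{\alpha,\infty}(\Sigma_\alpha)\bigr) = \bigcup_{\alpha \in A} \Gamma_I(\varphi_{\alpha,\infty}(M_\alpha), \varphi_{\alpha,\infty}(\Sigma_\alpha)).$$

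Next I would verify that this union of subgraphs, equipped with the distinguished vertex set above and with the cocone formed by the morphisms $\Gamma_I(\varphi_{\alpha,\infty})$, satisfies the universal property in $\mathcal{G}$ for the direct system $\{\Gamma_I(M_\alpha,\Sigma_\alpha,\Sigma'_\alpha),\Gamma_I(\varphi_{\alpha,\beta})\}$. Compatibility of the cocone is immediate from functoriality. Given any competing cocone $\nu_\alpha \colon \Gamma_I(M_\alpha,\Sigma_\alpha,\Sigma'_\alpha) \rightarrow (\Gamma',\Delta')$, the assignment $R\varphi_{\alpha,\infty}(a) \mapsto \nu_\alpha(Ra)$ is well-defined because any two witnesses $(\alpha,a)$ and $(\beta,b)$ of the same vertex can, by directedness, be routed through a common $\gamma \geq \alpha,\beta$ at which $\varphi_{\alpha,\gamma}(a)$ and $\varphi_{\beta,\gamma}(b)$ generate the same cyclic submodule, so the cocone relations $\nu_\gamma \Gamma_I(\varphi_{\alpha,\gamma}) = \nu_\alpha$ force equality. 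Preservation of adjacency follows because every edge of the union lies in some single $\Gamma_I(\varphi_{\alpha,\infty}(M_\alpha),\varphi_{\alpha,\infty}(\Sigma_\alpha))$.

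The main obstacle is the matching of distinguished vertex sets: the right-hand side carries $\bigcup_b C_b^I$ with components computed in $\Gamma_I(\varinjlim M_\alpha, \bigcup_\alpha \varphi_{\alpha,\infty}(\Sigma_\alpha))$, while the direct limit computation naturally produces the union over $\alpha$ of images of the components $C_a^{I,\alpha}$ taken in each $\Gamma_I(M_\alpha, \Sigma_\alpha)$. The inclusion of the latter in the former is immediate. For the converse, one uses that any path $Rx_1,\ldots,Rx_n$ in the limit graph involves only finitely many vertices; by directedness they all lie in some $\Gamma_I(\varphi_{\gamma,\infty}(M_\gamma),\varphi_{\gamma,\infty}(\Sigma_\gamma))$, and the same indices witness the adjacencies because adjacency is detected by the existence of $r,r' \in I\setminus\{0\}$ with $rx_i = r'x_{i+1}$, a relation that lifts to the stage $\gamma$. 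Hence every connected component in the limit is a union of images of components at finite stages, which finishes the identification of both distinguished subsets and therefore of both objects in $\mathcal{G}$.
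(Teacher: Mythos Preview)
Your proposal is correct and follows essentially the same route as the paper: both arguments use Lemma~5.1 to make the direct limit in $\mathcal{C}$ explicit, invoke Lemma~5.3 to identify the underlying graph with the directed union $\bigcup_\alpha \Gamma_I(\varphi_{\alpha,\infty}(M_\alpha),\varphi_{\alpha,\infty}(\Sigma_\alpha))$, and then verify the universal property in $\mathcal{G}$ by defining the comparison map $R\varphi_{\alpha,\infty}(a)\mapsto \nu_\alpha(Ra)$. The one place where you go further than the paper is the matching of distinguished vertex sets: the paper simply asserts that the union of components in the limit graph generated by $\bigcup_\alpha \varphi_{\alpha,\infty}(\Sigma'_\alpha)$ agrees with the images of the stagewise components, whereas you supply the finite-path lifting argument that actually justifies this; that extra care is welcome but does not change the strategy.
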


\begin{proof}
Let $\left\{(M_\alpha,\Sigma_\alpha,\Sigma'_\alpha),\varphi_{\alpha,\beta}:\alpha,\beta\in A\right\}$ be a direct system in $\mathcal{C}$. We prove that 

\[\Gamma_I(\varinjlim (M_\alpha,\Sigma_\alpha,\Sigma'_\alpha))\]

\noindent together with morphisms $\Gamma_I(\varphi_{\alpha,\infty})$, $\alpha\in A$, is a direct limit, in $\mathcal{C}$, for the system $\left\{\Gamma_I(M_\alpha,\Sigma_\alpha,\Sigma'_\alpha),\Gamma_I(\varphi_{\alpha,\beta}):\alpha,\beta\in A\right\}$. 

From the fact that 

\[\varphi_{\beta,\infty}\varphi_{\alpha,\beta}=\varphi_{\alpha,\infty}\]

\noindent for every $\alpha,\beta\in A$ such that $\alpha\leq\beta$ it follows that 

\[\Gamma_I(\varphi_{\beta,\infty})\Gamma_I(\varphi_{\alpha,\beta})=\Gamma_I(\varphi_{\alpha,\infty})\]

\noindent for all $\alpha,\beta\in A$ such that $\alpha\leq\beta$. Now, let $(\Gamma,\Delta)$ be an object of $\mathcal{G}$ and $\mu_\alpha:\Gamma_I(M_\alpha,\Sigma_\alpha,\Sigma'_\alpha)\rightarrow (\Gamma,\Delta)$ be morphisms in $\mathcal{G}$. From the fact that the set of connected components of $\Gamma_I(\bigcup_{\alpha\in A}M_\alpha,\bigcup_{\alpha\in A}\Sigma_\alpha)$, generated by $\bigcup_{\alpha\in A}\varphi_{\alpha,\infty}(\Sigma'_\alpha)$, is equal to the  set of connected components of $\bigcup_{\alpha\in A}\Gamma_I(M_\alpha,\Sigma_\alpha)$, generated by $\bigcup_{\alpha\in A}\Gamma_I(\varphi_{\alpha,\infty})(\Sigma'_\alpha)$, from (5.3), and from the equation

\[\varinjlim (M_\alpha,\Sigma_\alpha,\Sigma'_\alpha)=(\bigcup_{\alpha\in A}\varphi_{\alpha,\infty}(M_\alpha),\bigcup_{\alpha\in A}\varphi_{\alpha,\infty}(\Sigma_\alpha),\bigcup_{\alpha\in A}\varphi_{\alpha,\infty}(\Sigma'_\alpha))\]

\noindent It follows that

\[\Gamma_I(\varinjlim(M_\alpha,\Sigma_\alpha,\Sigma'_\alpha))=\bigcup_{\alpha\in A}\Gamma_I(\varphi_{\alpha,\infty})(\Gamma_I(M_\alpha,\Sigma_\alpha,\Sigma'_\alpha))\]

\noindent The right hand side of this equation is equal to 

\[\bigcup_{\alpha\in A}\Gamma_I(\varphi_{\alpha,\infty}(M_\alpha),\varphi_{\alpha,\infty}(\Sigma_\alpha),\varphi_{\alpha,\infty}(\Sigma'_\alpha))\]

\noindent Thus, we define $\nu:\varinjlim \Gamma_I(M_\alpha,\Sigma_\alpha,\Sigma'_\alpha)\rightarrow (\Gamma,\Delta)$ as follows: Let $R\varphi_{\alpha,\infty}(a)\in \left|\Gamma_I(M_\alpha,\Sigma_\alpha)\right|$ with $a\in\Sigma_\alpha$. We make $\nu(R\varphi_{\alpha,\infty}(a))$ to be equal to $\mu_\alpha(Ra)$ and we extend $\nu$ to a morphism in $G$. Clearly, $\nu$ thus defined, is a morphism in $\mathcal{G}$, it satisfies the equations $\nu\Gamma_I(\varphi_{\alpha,\infty})=\mu_\alpha$, and $\nu$ is unique with respect to this property. This concludes the proof. 

\end{proof}

\noindent The next proposition says that functors $\Gamma_I$ preserve coproducts in $\mathcal{C}$.

\begin{prop}
Let $I$ be an ideal. Let $\left\{(M_\alpha,\Sigma_\alpha,\Sigma'_\alpha):\alpha\in A\right\}$ be a non-empty collection of objects in $\mathcal{C}$. Then

\[\coprod_{\alpha\in A}\Gamma_I(M_\alpha,\Sigma_\alpha,\Sigma'_\alpha)=\Gamma_I(\coprod_{\alpha\in A}(M_\alpha,\Sigma_\alpha,\Sigma'_\alpha))\] 
\end{prop}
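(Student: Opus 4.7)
The plan is to compute both sides of the asserted equality explicitly, using the concrete descriptions of coproducts in $\mathcal{C}$ and $\mathcal{G}$ established in Lemmas 5.1 and 5.2, and then check that the underlying graphs and the distinguished vertex subsets match. Unpacking the left-hand side with Lemma 5.2 gives the graph with vertex set $\coprod_{\alpha\in A} \left|\Gamma_I(M_\alpha,\Sigma_\alpha)\right|$ (disjoint union), adjacency inherited component by component, and marked subset $\bigcup_{\alpha\in A}\iota_\alpha\bigl(\bigcup_{a\in\Sigma'_\alpha}C_a^I\bigr)$. Unpacking the right-hand side using Lemma 5.1 and the definition of $\Gamma_I$, we obtain the graph $\Gamma_I(\bigoplus_{\alpha\in A}M_\alpha,\bigcup_{\alpha\in A}\iota_\alpha(\Sigma_\alpha))$ marked by $\bigcup_{x\in\bigcup_{\alpha\in A}\iota_\alpha(\Sigma'_\alpha)}C_x^I$.

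The main step is to verify that the underlying graphs agree, that is
\[\Gamma_I\Bigl(\bigoplus_{\alpha\in A}M_\alpha,\bigcup_{\alpha\in A}\iota_\alpha(\Sigma_\alpha)\Bigr) \;=\; \coprod_{\alpha\in A}\Gamma_I(M_\alpha,\Sigma_\alpha).\]
For the vertex sets, both consist of cyclic modules $R\iota_\alpha(a)$ with $\alpha\in A$ and $a\in\Sigma_\alpha$; since each $\iota_\alpha$ is injective and $\iota_\alpha(a)=\iota_\beta(b)$ with $a,b$ nonzero forces $\alpha=\beta$ in a direct sum, the vertex sets are canonically identified. For adjacency, suppose $R\iota_\alpha(a)$ and $R\iota_\beta(b)$ are adjacent in the left-hand graph: there exist $r,r'\in I$ with $\iota_\alpha(ra)=r\iota_\alpha(a)=r'\iota_\beta(b)=\iota_\beta(r'b)\neq 0$. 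The direct-sum decomposition forces $\alpha=\beta$ and $ra=r'b\neq 0$ in $M_\alpha$, which is precisely the condition for $Ra$ and $Rb$ to be adjacent in $\Gamma_I(M_\alpha,\Sigma_\alpha)$. The converse is immediate from the injectivity of $\iota_\alpha$. Hence the two graphs agree as disjoint unions.

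Once the underlying graphs agree, connected components split across indices: for $a\in\Sigma_\alpha$, the connected component of $R\iota_\alpha(a)$ computed inside the big graph is exactly $\iota_\alpha(C_a^I)$, since no adjacency crosses between distinct $M_\alpha$ and $M_\beta$. This gives the equality of marked subsets
\[\bigcup_{x\in\bigcup_{\alpha\in A}\iota_\alpha(\Sigma'_\alpha)}C_x^I \;=\; \bigcup_{\alpha\in A}\iota_\alpha\Bigl(\bigcup_{a\in\Sigma'_\alpha}C_a^I\Bigr),\]
completing the proof. The only real obstacle is the adjacency argument: ruling out that two vertices coming from distinct summands $M_\alpha$ and $M_\beta$ could become adjacent inside $\Gamma_I(\bigoplus_\alpha M_\alpha,\ldots)$. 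This is precisely where one uses that the object in $\mathcal{C}$ above is built on a direct sum rather than a direct product, so that a nonzero element in the intersection $I\iota_\alpha(a)\cap I\iota_\beta(b)$ is impossible unless $\alpha=\beta$.
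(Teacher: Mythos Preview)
Your proof is correct and takes a genuinely different route from the paper. The paper verifies the universal property directly: it shows that $\Gamma_I\bigl(\bigoplus_\alpha M_\alpha,\bigcup_\alpha\iota_\alpha(\Sigma_\alpha),\bigcup_\alpha\iota_\alpha(\Sigma'_\alpha)\bigr)$ together with the maps $\Gamma_I(\iota_\alpha)$ satisfies the defining property of a coproduct in $\mathcal{G}$, by constructing for any family $\mu_\alpha:\Gamma_I(M_\alpha,\Sigma_\alpha,\Sigma'_\alpha)\to(\Gamma,\Delta)$ the unique mediating morphism $\nu$ via $\nu(R\iota_\alpha(a))=\mu_\alpha(Ra)$, using only that the union $\bigcup_\alpha\iota_\alpha(\Sigma_\alpha)$ is disjoint. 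You instead compute both sides concretely and exhibit an explicit identification of the underlying graphs and marked subsets, the key observation being that $I\iota_\alpha(a)\cap I\iota_\beta(b)\subseteq\iota_\alpha(M_\alpha)\cap\iota_\beta(M_\beta)=\{0\}$ for $\alpha\neq\beta$, so no adjacency crosses summands. Your approach makes the structural reason for the result transparent (the graph of a direct sum with the ``coproduct'' subset is literally a disjoint union of graphs) and handles the marked-subset compatibility by the clean observation that connected components cannot span distinct summands; the paper's approach is shorter and more categorical, sidestepping any explicit comparison of adjacency relations or components by invoking uniqueness of coproducts. One small point: your ``equality'' of vertex sets is really the canonical bijection $(\alpha,Ra)\mapsto R\iota_\alpha(a)$, which is fine since the statement is best read as an isomorphism of objects in $\mathcal{G}$.
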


\begin{proof}
Let $\left\{(M_\alpha,\Sigma_\alpha,\Sigma'_\alpha):\alpha\in A\right\}$ be a non-empty collection of objects in $\mathcal{C}$. By (5.1), the coproduct, in $\mathcal{C}$, $\coprod_{\alpha\in A}(M_\alpha,\Sigma_\alpha,\Sigma'_\alpha)$ of the collection  $\left\{(M_\alpha,\Sigma_\alpha,\Sigma'_\alpha):\alpha\in A\right\}$ is given by 

\[(\bigoplus_{\alpha\in A}M_\alpha,\bigcup_{\alpha\in A}\iota_\alpha(\Sigma_\alpha),\bigcup_{\alpha\in A}\iota_\alpha(\Sigma'_\alpha))\]

\noindent together with cannonical morphisms $\iota_\alpha$, $\alpha\in A$. We prove that

\[\Gamma_I(\bigoplus_{\alpha\in A}M_\alpha,\bigcup_{\alpha\in A}\iota_\alpha(\Sigma_\alpha),\bigcup_{\alpha\in A}\iota_\alpha(\Sigma'_\alpha))\]

\noindent together with morphisms $\Gamma_I(\iota_\alpha)$, $\alpha\in A$ is a coproduct, in $\mathcal{G}$ of the collection $\left\{\Gamma_I(M_\alpha,\Sigma_\alpha,\Sigma'_\alpha):\alpha\in A\right\}$ of objects in $\mathcal{C}$. Let $(\Gamma,\Delta)$ be an object in $\mathcal{G}$ and let $\mu_\alpha:\Gamma_I(M_\alpha,\Sigma_\alpha,\Sigma'_\alpha)\rightarrow (\Gamma,\Delta)$ with $\alpha\in A$ be morphisms in $\mathcal{G}$. We define 

\[\nu:\Gamma_I(\bigoplus_{\alpha\in A}M_\alpha,\bigcup_{\alpha\in A}\iota_\alpha(\Sigma_\alpha),\bigcup_{\alpha\in A}\iota_\alpha(\Sigma'_\alpha))\rightarrow (\Gamma,\Delta)\]

\noindent as follows: Let $a\in\Sigma_\alpha$. We make $\nu(R\iota_\alpha(a))$ to be equal to $\mu_\alpha(Ra)$. Now, since the union $\bigcup_{\alpha\in A}\iota_\alpha(\Sigma_\alpha)$ is disjoint, this correspondence extends uniquely to a function

\[\nu:\left|\Gamma_I(\bigoplus_{\alpha\in A}M_\alpha,\bigcup_{\alpha\in A}\iota_\alpha(\Sigma_\alpha),\bigcup_{\alpha\in A}\iota_\alpha(\Sigma'_\alpha))\right|\rightarrow \left|(\Gamma,\Delta)\right|\]

\noindent We extend $\nu$ to a morphism in $G$. Clearly $\nu$, thus defined is a morphism in $\mathcal{G}$, $\nu$ satisfies the equation $\nu\Gamma_I(\iota_\alpha)=\mu_\alpha$ for all $\alpha\in A$, and it is unique with respect to this property. This concludes the proof.

\end{proof}

\noindent Note that, in general, functors of the form $\Gamma_I$ do not preserve direct products. This can easily be seen by the fact that while the graph

\[(\Gamma_{\mathbb{F}_3}(\mathbb{F}_3,\mathbb{F}_3\setminus\left\{0\right\},\emptyset))^2\]

\noindent has only one vertex, the graph

\[\Gamma_{\mathbb{F}_3}(\mathbb{F}_3^2,(\mathbb{F}_3\setminus\left\{0\right\})^2,\emptyset)\]

\noindent has two vertices, namely, $\mathbb{F}_3$-spaces generated by $(1,-1)$, and $(1,1)$ respectively.

\noindent Observe also that functors of the form $\Gamma_I$ need not, in general, preserve equalizers. To see this observe that while the equalizer of the pair of morphisms in $\mathcal{C}$

\[id_{\mathbb{F}_3}, -id_{\mathbb{F}_3}\]

\noindent as endomorphisms of $(\mathbb{F}_3,\mathbb{F}_3\setminus\left\{0\right\},\emptyset)$, is equal to the triple $(\left\{0\right\},\emptyset,\emptyset)$, the pair of morphisms

\[\Gamma_{\mathbb{F}_3}(id_{\mathbb{F}_3}),\Gamma_{\mathbb{F}_3}(-id_{\mathbb{F}_3}) \]

\noindent as endomorphisms, in $\mathcal{G}$, of $\Gamma_{\mathbb{F}_3}(\mathbb{F}_3,\mathbb{F}_3\setminus\left\{0\right\},\emptyset)$ is equal to the pair

\[id_{\Gamma_{\mathbb{F}_3}(\mathbb{F}_3,\mathbb{F}_3\setminus\left\{0\right\},\emptyset)}, id_{\Gamma_{\mathbb{F}_3}(\mathbb{F}_3,\mathbb{F}_3\setminus\left\{0\right\},\emptyset)}\]

\noindent whose equalizer, in $\mathcal{G}$, is equal to $\Gamma_{\mathbb{F}_3}(\mathbb{F}_3,\mathbb{F}_3\setminus\left\{0\right\},\emptyset)$

\section{Indecomposability in infinite dimensions}

\noindent In this section we use results of section 5 to extend and generalize the constructions and results of section 4. Our main results provide, under certain conditions, criteria for the existence of indecomposable modules of arbitrarily large and infinite ''rank''. We begin by precising on the notion of rank of a module.

\noindent Given a class of modules $\Omega$, we will say that a function $\rho:\Omega\rightarrow\mathbb{N}\cup\left\{\infty\right\}$ is a rank funcion if
\begin{enumerate}
\item $\rho(A\oplus B)\leq \rho(A)+\rho(B)$ for all $A,B\in\Omega$ such that $A\oplus B\in\Omega$.
\item $\rho(A)\leq \rho(B)$ for all $A,B\in\Omega$ such that $A\leq B$.
\end{enumerate}

\noindent Where we use the convention that $a+\infty =\infty+a=\infty$ for all $a\in\mathbb{N}\cup\left\{\infty\right\}$. Given a module $M\in\Omega$ we will call the number $\rho(M)$, the rank of $M$. We regard rank functions as measures of complexity on their domains of definition. More precisely, we regard the rank, $\rho(M)$, of a module $M$, with respect to a rank function $\rho$, as the number of dimensions of complexity of $M$, with respect to $\rho$. Functions $Gdim$, $dGdim$, and $\ell$, associating to each module $M$ its Goldie dimension, $GdimM$, its dual Goldie dimension, $dGdim$, and its length, $\ell(M)$, respectively, are all rank functions with domain $R$-Mod. Further, functions $Kdim$ and $dKdim$ associating to each module $M$ with Krull dimension, its Krull dimension, $KdimM$, and its dual Krull dimension, $dKdimM$, when these ordinals are finite, and $\infty$, when they are not, respectively, are both rank functions with domain equal to the class of all modules with Krull dimension.

We now set conditions on which we can guarantee the existence of indecomposable modules of arbitrarily large and infinite rank.

Given a direct system $\left\{M_\alpha,\varphi_{\alpha,\beta}:\alpha,\beta\in A\right\}$ in $R$-Mod, we will say that $\left\{M_\alpha,\varphi_{\alpha,\beta}:\alpha\in A\right\}$ is countable if the set $A$ is countable. Further, we will say that a class of modules $\Omega$ is complete, if $\Omega$ is closed under finite direct sums, direct summands, and direct limits of countable direct systems. $R$-Mod and the class of all modules with Krull dimension are both examples of complete classes. It is easily seen that given a class of modules $\Omega$, the class of all complete classes of modules containing $\Omega$, is closed under intersections. Thus, we can define the complete clousure, $\tilde{\Omega}$, of $\Omega$, as the intersection of all complete classes of modules containing $\Omega$. $\tilde{\Omega}$ thus defined, is complete, and is the minimal complete class containing $\Omega$. The class of all countably generated modules is a complete class, and further, it is equal to $\tilde{\Omega}$, where $\Omega$ is the class of all finitely generated modules.

Now, given an ideal $I$, and two modules $M$ and $N$, we will say that a morphism $f:M\rightarrow N$ is $I$-pure if $f(M)\leq_I N$. Further, we will say that a direct system $\left\{M_\alpha,\varphi_{\alpha,\beta}:\alpha\in A\right\}$ in $R$-Mod is $I$-pure if $\varphi_{\alpha,\beta}$ is an $I$-pure monomorphism for all $\alpha,\beta\in A$ such that $\alpha\leq\beta$. The following is an example of a countable $I$-pure direct system: Let $M_1\leq M_2\leq...$ be a chain of modules such that, for each $n\geq 1$, $M_n\leq_IM_{n+1}$. Let, for each $n,m$ such that $n\leq m$, $\varphi_{n,m}$, denote the inclusion of $M_n$ in $M_m$. Then, in this case, the system $\left\{M_n,\varphi_{n,m}:n,m\geq 1\right\}$ is a countable $I$-pure direct system. Recall that in section 4, for every field $k$, for every finite dimensional $k$-vector space $V$, and for any infinite admissible sequence $s$, a chain of $k\ltimes V$-modules, $M_{s_1}\leq M_{s_2}\leq...$ was defined. Recall also that, under these conditions, $M_{s_n}$ is always a $Soc(k\ltimes V)$-pure submodule of $M_{s_{n+1}}$ for all $n\geq 1$. Thus, in this case, for every infinite subsequence $t$ of $s$, the direct system $\left\{M_{t_n},\varphi_{n,m}:n,m\geq 1\right\}$ is a countable $Soc(k\ltimes V)$-pure direct system.

The following is the main result of this section. In it we provide criteria for the existence of indecomposable modules of arbitrarily large and infinite rank with respect to rank functions defined on complete classes of modules.

\begin{thm}
Let $\Omega$ be a class of modules and $\rho$ a rank function with domain $\Omega$. Suppose $\Omega$ is complete. If there exist, an ideal $I$ and an $I$-pure countable direct system $\left\{M_\alpha,\varphi_{\alpha,\beta}:\alpha,\beta\in A\right\}$ in $R$-Mod such that
\begin{enumerate}
\item $M_\alpha\in\Omega\cap\mathfrak{D}(I)$ for all $\alpha\in A$.
\item There exists a sequence $\alpha_1,\alpha_2,...$ in $A$ such that $\rho(M_{\alpha_i})<\rho(M_{\alpha_{i+1}})$ for all $i\geq 1$.
\item There exists $\kappa\in\mathbb{N}$ such that $cdim_IM_\alpha\leq\kappa$ for all but finetly many $\alpha\in A$.
\end{enumerate}
 
\noindent Then the following can be concluded

\begin{enumerate}
\item For each $m$ there exists an indecomposable $M\in\Omega$ such that $\rho(M)\geq m$
\item There exists an indecomposable $M\in\Omega$ such that $\rho(M)=\infty$
\end{enumerate}
\end{thm}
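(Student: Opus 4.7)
The plan is to treat the two conclusions in sequence, with part 1 as an easy direct application of the finite combinatorial dimension bound and part 2 requiring the categorification machinery of section 5 to pass to a direct limit.

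For part 1, I would fix $m$ and choose $\alpha\in A$ with $\rho(M_\alpha)\geq m\kappa$ and $cdim_I M_\alpha\leq\kappa$; both are possible by hypotheses (2) and (3). Since $M_\alpha\in\mathfrak{D}(I)$ and the combinatorial dimension is finite, Theorem 2.2 gives $KS\ell(M_\alpha)\leq\kappa$, so $M_\alpha$ admits a finite indecomposable decomposition $M_\alpha=N_1\oplus\cdots\oplus N_r$ with $r\leq\kappa$. If every $\rho(N_i)<m$, subadditivity of $\rho$ would give $\rho(M_\alpha)\leq\sum\rho(N_i)<rm\leq\kappa m$, contradicting the choice of $\alpha$. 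Hence some $N_i$ is indecomposable with $\rho(N_i)\geq m$, and $N_i\in\Omega$ by closure under direct summands.

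For part 2, the natural candidate is $M=\varinjlim M_\alpha$, taken along a countable cofinal subsystem; since $\Omega$ is complete, $M\in\Omega$. Identifying each $M_\alpha$ with its image in $M$ via the $I$-pure monomorphisms $\varphi_{\alpha,\infty}$, we have $M=\bigcup_\alpha M_\alpha$. The first step is to verify $M\in\mathfrak{D}(I)$: $I^2M=\bigcup I^2M_\alpha=\{0\}$, and if $ra=0$ with $r\in I\setminus\{0\}$ and $a\in M_\alpha$, then $a\in ann_I^*M_\alpha=IM_\alpha\subseteq IM$, using the $I$-purity of the inclusions and $M_\alpha\in\mathfrak{D}(I)$. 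Next, Proposition 5.4 (combined with Lemma 5.3) gives $\Gamma_I(M)=\bigcup_\alpha\Gamma_I(M_\alpha)$, and by Lemma 2.4 each $\Gamma_I(M_\alpha)$ sits as a complete subgraph of $\Gamma_I(M_\beta)$ whenever $\alpha\leq\beta$, so the number of connected components is a monotone non-increasing function along the system.

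The key step is then bounding $cdim_I M\leq\kappa$. Suppose for contradiction that $\Gamma_I(M)$ has at least $\kappa+1$ distinct connected components and pick vertices $Rx_1,\dots,Rx_{\kappa+1}$, one in each. Each $x_i$ lies in some $M_{\alpha_i}$; choosing $\alpha\in A$ above all $\alpha_i$, all $Rx_i$ become vertices of $\Gamma_I(M_\alpha)$. Since $\Gamma_I(M_\alpha)$ is a complete subgraph of $\Gamma_I(M)$, the $Rx_i$ lie in distinct components of $\Gamma_I(M_\alpha)$, forcing $cdim_I M_\alpha\geq\kappa+1$ and contradicting hypothesis (3). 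Thus Theorem 2.2 gives $KS\ell(M)\leq\kappa$, and $M$ admits a finite indecomposable decomposition $M=N_1\oplus\cdots\oplus N_r$. Monotonicity of $\rho$ together with hypothesis (2) forces $\rho(M)=\infty$, subadditivity then forces $\rho(N_i)=\infty$ for some $i$, and closure of $\Omega$ under summands delivers the desired indecomposable $N_i\in\Omega$ of infinite rank.

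The main obstacle is the passage to the direct limit, specifically the twin verifications that $\mathfrak{D}(I)$ is preserved and that $cdim_I$ does not blow up in the limit; both hinge essentially on the $I$-purity assumption on the system (giving honest inclusions of modules and of graphs of cyclic modules) and on Proposition 5.4 (which turns the limit of graphs into the union of subgraphs so that connected components can only merge, never split). Once these are in place, Theorem 2.2 does the remaining work uniformly for parts 1 and 2.
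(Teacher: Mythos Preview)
Your proposal is correct and follows essentially the same route as the paper. The paper isolates the two direct-limit facts you prove inline---closure of $\mathfrak{D}(I)$ under monic direct limits and preservation of the bound on $cdim_I$ under $I$-pure direct limits---as separate Lemmas 6.2--6.4, but the substance of the argument (pigeonhole on a finite indecomposable decomposition for part~1; pass to $\varinjlim M_\alpha$, bound its $I$-combinatorial dimension via the union description of $\Gamma_I$, then apply Theorem~2.2 and subadditivity of $\rho$ for part~2) is identical.
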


\noindent Observe that by the remarks preceeding (6.1), conjecture 4.1 implies the existence of indecomposable $k\ltimes V$-modules of infinite Goldie dimension. We regard (6.1) as an extension and generalization of (3.1)

\noindent We now reduce the proof of (6.1) to the proof of a series of lemmas.

\begin{lem}
Let $I$ be an ideal. Let $\left\{M_\alpha,\varphi_{\alpha,\beta}:\alpha,\beta\in A\right\}$ be a direct system in $R$-Mod. Suppose $\varphi_{\alpha,\beta}$ is a monomorphism for all $\alpha,\beta\in A$ such that $\alpha\leq\beta$. If $M_\alpha\in\mathfrak{D}(I)$ for all $\alpha\in A$, then $\varinjlim M_\alpha\in\mathfrak{D}(I)$.
\end{lem}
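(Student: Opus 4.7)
The plan is to verify directly that the direct limit $M=\varinjlim M_\alpha$ satisfies the three-way equality $IM = \mathrm{ann}_I M = \mathrm{ann}_I^* M$, by reducing each relevant containment to the analogous property of a single $M_\alpha$. Since every transition map $\varphi_{\alpha,\beta}$ is a monomorphism, the canonical morphisms $\varphi_{\alpha,\infty}:M_\alpha\to M$ are also monomorphisms, and $M=\bigcup_{\alpha\in A}\varphi_{\alpha,\infty}(M_\alpha)$; identifying each $M_\alpha$ with its image, we may regard $M$ simply as the directed union of the $M_\alpha$.

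First I would establish that $IM\subseteq \mathrm{ann}_I M$, i.e. $I^2M=\{0\}$. Any $x\in M$ lies in some $M_\alpha$, and because $M_\alpha\in\mathfrak{D}(I)$ we have $I^2M_\alpha=\{0\}$, so $I^2x=\{0\}$ inside $M_\alpha$ and hence inside $M$. Since $M$ is the union of the $M_\alpha$, this yields $I^2M=\{0\}$. Combined with the general inclusion $\mathrm{ann}_I M\subseteq \mathrm{ann}_I^* M$, this gives the chain $IM\subseteq \mathrm{ann}_I M\subseteq\mathrm{ann}_I^*M$, so only the reverse inclusion $\mathrm{ann}_I^*M\subseteq IM$ remains.

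Next I would prove $\mathrm{ann}_I^*M\subseteq IM$. Take $x\in\mathrm{ann}_I^*M$ and $r\in I\setminus\{0\}$ with $rx=0$. Choose $\alpha\in A$ such that $x\in \varphi_{\alpha,\infty}(M_\alpha)$; writing $x=\varphi_{\alpha,\infty}(y)$ we have $\varphi_{\alpha,\infty}(ry)=r\varphi_{\alpha,\infty}(y)=rx=0$, and the injectivity of $\varphi_{\alpha,\infty}$ forces $ry=0$ in $M_\alpha$. Thus $y\in\mathrm{ann}_I^*M_\alpha=IM_\alpha$ (using $M_\alpha\in\mathfrak{D}(I)$), and applying $\varphi_{\alpha,\infty}$ we obtain $x\in IM$. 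This establishes the missing inclusion, and we conclude $IM=\mathrm{ann}_I M=\mathrm{ann}_I^*M$, i.e.\ $M\in\mathfrak{D}(I)$.

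There is no real obstacle here; the argument is essentially a bookkeeping verification. The only subtlety is the appeal to injectivity of $\varphi_{\alpha,\infty}$, which is precisely why the hypothesis that all transition maps are monomorphisms matters. Without that hypothesis one would still get $I^2M=0$, but the passage from $rx=0$ in $M$ to $ry=0$ in the relevant $M_\alpha$ could fail, and the inclusion $\mathrm{ann}_I^*M\subseteq IM$ would no longer be immediate.
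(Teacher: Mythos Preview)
Your proof is correct and follows essentially the same route as the paper's: both arguments first deduce $I^2\varinjlim M_\alpha=\{0\}$ from $I^2M_\alpha=\{0\}$, and then use the injectivity of the canonical maps $\varphi_{\alpha,\infty}$ to pull an element of $\mathrm{ann}_I^*\varinjlim M_\alpha$ back to $\mathrm{ann}_I^*M_\alpha=IM_\alpha$ and push it forward into $I\varinjlim M_\alpha$. Your write-up is in fact a bit more explicit than the paper's in spelling out the chain $IM\subseteq\mathrm{ann}_I M\subseteq\mathrm{ann}_I^*M$, but the underlying argument is the same.
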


\begin{proof}
Let $\left\{M_\alpha,\varphi_{\alpha,\beta}:\alpha,\beta\in A\right\}$ be a direct system in $R$-Mod such that $\varphi_{\alpha,\beta}$ is a monomorphism for all $\alpha,\beta\in A$ such that $\alpha\leq\beta$. Suppose also that $M_\alpha\in\mathfrak{D}(I)$ for all $\alpha\in A$. From the fact that $I^2\varphi_{\alpha,\infty}(a)=\varphi_{\alpha,\infty}(I^2a)$ for all $a\in M_\alpha$ and for all $\alpha\in A$, and from the fact that $I^2M_\alpha=\left\{0\right\}$ for all $\alpha\in A$ it follows that $I^2\varinjlim M_\alpha=\left\{0\right\}$. Now, suppose that $a\in M_\alpha$ is such that $\varphi_{\alpha,\infty}(a)\in ann_I^*\varinjlim M_\alpha$. From the fact that $\varphi_{\alpha,\beta}$ is a monomorphism for all $\alpha,\beta\in A$ with $\alpha\leq\beta$ it follows that $\varphi_{\alpha,\infty}$ is also a monomorphism for all $\alpha\in A$. From this it follows that $a\in ann_I^*M_\alpha$, that is, $a\in IM_\alpha$. It follows that $\varphi_{\alpha,\infty}(a)\in I\varinjlim M_\alpha$. We conclude that $\varinjlim M_\alpha\in\mathfrak{D}(I)$. This concludes the proof.
\end{proof}

\begin{lem}
Let $\left\{\Gamma_\alpha,\varphi_{\alpha,\beta}:\alpha,\beta\in A\right\}$ be a direct system in $G$. Let $n\geq 1$. If the number of connected components of $\Gamma_\alpha$ is $\leq n$ for all $\alpha\in A$, then the number of connected components of $\varinjlim \Gamma_\alpha$ is $\leq n$.
\end{lem}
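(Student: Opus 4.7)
The plan is to argue by contradiction using the pigeonhole principle, leveraging two structural facts about the direct limit: every vertex of $\varinjlim \Gamma_\alpha$ is the image under some $\varphi_{\alpha,\infty}$ of a vertex in $\Gamma_\alpha$, and the canonical maps $\varphi_{\alpha,\infty} : \Gamma_\alpha \to \varinjlim \Gamma_\alpha$ are morphisms in $G$ (hence send paths to paths and preserve connected components).

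Suppose, aiming for a contradiction, that $\varinjlim \Gamma_\alpha$ has at least $n+1$ connected components, and choose vertices $v_1, \ldots, v_{n+1}$ lying in pairwise distinct components. By the explicit description of direct limits in $G$ (obtained analogously to the construction carried out in the proof of (5.2) for $\mathcal{G}$, using the disjoint union on vertex sets modulo the equivalence relation generated by the bonding maps, with adjacency induced componentwise), each $v_i$ admits a representative $a_i \in |\Gamma_{\alpha_i}|$ with $\varphi_{\alpha_i,\infty}(a_i) = v_i$. Since $A$ is directed and we have only finitely many indices to accommodate, there exists $\gamma \in A$ with $\alpha_i \leq \gamma$ for every $1 \leq i \leq n+1$. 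Setting $b_i = \varphi_{\alpha_i,\gamma}(a_i) \in |\Gamma_\gamma|$, we then have $\varphi_{\gamma,\infty}(b_i) = v_i$ for every $i$.

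By hypothesis, $\Gamma_\gamma$ has at most $n$ connected components, so by the pigeonhole principle there are distinct indices $i \neq j$ such that $b_i$ and $b_j$ lie in the same connected component of $\Gamma_\gamma$. Choose a path $b_i = c_0, c_1, \ldots, c_m = b_j$ in $\Gamma_\gamma$. Since $\varphi_{\gamma,\infty}$ is a graph morphism, it sends adjacent vertices to adjacent vertices, so $v_i = \varphi_{\gamma,\infty}(c_0), \varphi_{\gamma,\infty}(c_1), \ldots, \varphi_{\gamma,\infty}(c_m) = v_j$ is a path in $\varinjlim \Gamma_\alpha$, placing $v_i$ and $v_j$ in the same connected component of $\varinjlim \Gamma_\alpha$. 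This contradicts the choice of the $v_i$ and concludes the proof.

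The main obstacle is pinning down the description of the direct limit in $G$. I would establish this by imitating the proof of (5.2): cocompleteness of $G$ follows from the existence of coproducts (disjoint unions of graphs) and coequalizers (the quotient construction $\Gamma^{f,g}$ already described in the proof of (5.2)), and the explicit form of $\varinjlim \Gamma_\alpha$ then falls out as a quotient of $\coprod_\alpha \Gamma_\alpha$ by the equivalence relation identifying $a \in |\Gamma_\alpha|$ with $\varphi_{\alpha,\beta}(a) \in |\Gamma_\beta|$ whenever $\alpha \leq \beta$. The two properties needed above, namely that every vertex of the limit comes from some $|\Gamma_\alpha|$ and that each $\varphi_{\alpha,\infty}$ is a morphism in $G$, then follow directly from this construction, and the pigeonhole argument takes care of the rest.
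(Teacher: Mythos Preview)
Your proof is correct and follows essentially the same strategy as the paper's: pick representatives of finitely many components of the limit, push them into a common $\Gamma_\gamma$ via directedness, and use that $\varphi_{\gamma,\infty}$ is a graph morphism together with the bound on the number of components of $\Gamma_\gamma$ to derive a contradiction. The only cosmetic difference is that the paper phrases the final step in terms of the image subgraph $\varphi_{\gamma,\infty}(\Gamma_\gamma)$ rather than lifting to $\Gamma_\gamma$ and pushing a path forward, but the content is the same.
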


\begin{proof}
Let $\left\{\Gamma_\alpha,\varphi_{\alpha,\beta}:\alpha,\beta\in A\right\}$ be a direct system in $G$ such that $\Gamma_\alpha$ has at most $n$ connected components for each $\alpha\in A$. From the fact that $\varphi_{\alpha,\infty}$ is a morphism in $G$ for all $\alpha\in A$ it follows that the the subgraph $\varphi_{\alpha,\infty}(\Gamma_\alpha)$ of $\varinjlim \Gamma_\alpha$ has at most $n$ components for each $\alpha\in A$. Let $X_1,...,X_k$ be connected components of $\varinjlim\Gamma_\alpha$. Let $\alpha_1,...,\alpha_k\in A$ be such that 

\[X_i\cap \left|\varphi_{{\alpha_i},\infty}(\Gamma_{\alpha_i})\right|\neq\emptyset\]

\noindent for all $1\leq i\leq k$. Let $\gamma\in A$ be such that $\alpha_i\leq\gamma$ for all $1\leq i\leq k$. Then 

\[X_i\cap \left|\varphi_{\gamma,\infty}(\Gamma_{\gamma})\right|\neq\emptyset\]

\noindent It is easily seen that the sets $X_i\cap \left|\varphi_{\gamma,\infty}(\Gamma_{\gamma})\right|$ are all in different connected components of $\varphi_\gamma^\infty(\Gamma_\gamma)$. This concludes the proof. 
\end{proof}

\begin{lem}
Let $I$ be an ideal. Let $\left\{M_\alpha,\varphi_{\alpha,\beta}:\alpha,\beta\in A\right\}$ be an $I$-pure direct system in $R$-Mod. Let $n\geq 1$. Suppose $cdim_IM_\alpha$ exists and is $\leq n$ for all $\alpha\in A$, then $cdim_IM_\alpha$ exists and is $\leq n$.
\end{lem}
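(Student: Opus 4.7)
The plan is to combine the categorical machinery from Section 5 (specifically Proposition 5.4, which says $\Gamma_I$ preserves direct limits of direct systems in $\mathcal{C}$) with the elementary graph-theoretic Lemma 6.3. The role of $I$-purity is to make everything compatible at the level of the triples $(M, M\setminus IM, \emptyset)$ that feed into the functor $\Gamma_I$.

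First I would check that $\varinjlim M_\alpha \in \mathfrak{D}(I)$ so that $cdim_I \varinjlim M_\alpha$ is even defined. Since every $I$-pure morphism is, by assumption, a monomorphism, this is immediate from Lemma 6.2. Next I would lift the given direct system to a direct system in $\mathcal{C}$ by setting $\Sigma_\alpha = M_\alpha \setminus I M_\alpha$ and $\Sigma'_\alpha = \emptyset$. For this to be well-posed I need that each $\varphi_{\alpha,\beta}$ satisfies $\varphi_{\alpha,\beta}(M_\alpha\setminus IM_\alpha)\subseteq M_\beta\setminus IM_\beta$: if $y\in M_\alpha\setminus IM_\alpha$ but $\varphi_{\alpha,\beta}(y)\in IM_\beta$, then $\varphi_{\alpha,\beta}(y)\in IM_\beta \cap \varphi_{\alpha,\beta}(M_\alpha) = I\varphi_{\alpha,\beta}(M_\alpha)$ by $I$-purity, and since $\varphi_{\alpha,\beta}$ is a monomorphism this forces $y\in IM_\alpha$, a contradiction.

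Second, I would verify the identification
\[
\bigcup_{\alpha\in A}\varphi_{\alpha,\infty}(M_\alpha\setminus IM_\alpha) \;=\; \varinjlim M_\alpha \setminus I\varinjlim M_\alpha.
\]
The inclusion $\subseteq$ uses $I$-purity of each $\varphi_{\alpha,\infty}$ (which follows from $I$-purity of the structure maps together with the way direct limits are realized as unions of images). The reverse inclusion is straightforward: any element of $\varinjlim M_\alpha$ is of the form $\varphi_{\alpha,\infty}(a)$ for some $\alpha$ and some $a\in M_\alpha$; if $a\in IM_\alpha$ then $\varphi_{\alpha,\infty}(a)\in I\varinjlim M_\alpha$. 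This identification, together with the explicit description of direct limits in $\mathcal{C}$ from Lemma 5.1, shows that
\[
\varinjlim(M_\alpha,M_\alpha\setminus IM_\alpha,\emptyset) \;=\; (\varinjlim M_\alpha,\varinjlim M_\alpha\setminus I\varinjlim M_\alpha,\emptyset).
\]

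Third, applying Proposition 5.4 to this direct system yields
\[
\Gamma_I(\varinjlim M_\alpha) \;=\; \varinjlim \Gamma_I(M_\alpha),
\]
after forgetting the empty $\Delta$-components (equivalently, working with the image under the forgetful functor $\mathcal{G}\to G$). Since by hypothesis each $\Gamma_I(M_\alpha)$ has at most $n$ connected components, Lemma 6.3 implies that $\varinjlim \Gamma_I(M_\alpha)$ has at most $n$ connected components, hence so does $\Gamma_I(\varinjlim M_\alpha)$, giving $cdim_I \varinjlim M_\alpha \leq n$.

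The only nontrivial step is the compatibility check in the second paragraph; once one shows that $I$-purity guarantees the triples $(M_\alpha, M_\alpha\setminus IM_\alpha, \emptyset)$ form a direct system in $\mathcal{C}$ whose limit has underlying module $\varinjlim M_\alpha$ and second component $\varinjlim M_\alpha\setminus I\varinjlim M_\alpha$, the result is essentially a formal consequence of Proposition 5.4 and Lemma 6.3. I expect the main obstacle to be bookkeeping rather than mathematical depth, in particular making precise the identification of the graph-valued direct limit computed in $\mathcal{G}$ with the ordinary $I$-graph of cyclic modules of $\varinjlim M_\alpha$.
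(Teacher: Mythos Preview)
Your proposal is correct and follows essentially the same route as the paper: use Lemma 6.2 to get $\varinjlim M_\alpha\in\mathfrak{D}(I)$, use $I$-purity to lift to a direct system of triples $(M_\alpha,M_\alpha\setminus IM_\alpha,\emptyset)$ in $\mathcal{C}$, identify its limit with $(\varinjlim M_\alpha,\varinjlim M_\alpha\setminus I\varinjlim M_\alpha,\emptyset)$, and then combine (5.4) with Lemma 6.3. The paper's proof is terser (it does not cite (5.4) explicitly and compresses your two-inclusion argument into the single identity $I\bigcup_\alpha\varphi_{\alpha,\infty}(M_\alpha)=\bigcup_\alpha I\varphi_{\alpha,\infty}(M_\alpha)$), but the logical skeleton is the same, and your added verification that $\varphi_{\alpha,\beta}(M_\alpha\setminus IM_\alpha)\subseteq M_\beta\setminus IM_\beta$ is exactly the detail the paper leaves implicit.
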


\begin{proof}
Let $\left\{M_\alpha,\varphi_{\alpha,\beta}:\alpha,\beta\in A\right\}$ be an $I$-pure direct system in $R$-Mod such that $cdim_IM_\alpha\leq n$ for all $\alpha\in A$. From the fact that $\varphi_{\alpha,\beta}$ is a monomorphism for all $\alpha,\beta\in A$ such that $\alpha\leq\beta$ it follows, from (6.2) that $cdim_I\varinjlim M_\alpha$ exists. Now, from the fact that $\varphi_{\alpha,\beta}(M_\alpha)\leq_IM_\beta$ for all $\alpha,\beta\in A$ with $\alpha\leq\beta$ it follows that for each $\alpha,\beta\in A$ such that $\alpha\leq\beta$, $\varphi_{\alpha,\beta}$ is a morphism in

\[\Hom_{\mathcal{C}}((M_\alpha,M_\alpha\setminus IM_\alpha,\emptyset),(M_\beta,M_\beta\setminus IM_\beta,\emptyset))\]

\noindent Thus 

\[\left\{(M_\alpha,M_\alpha\setminus IM_\alpha,\emptyset),\varphi_{\alpha,\beta}:\alpha,\beta\in A\right\}\]

\noindent is a direct system in $\mathcal{C}$. The direct limit, in $\mathcal{C}$ of this direct system is 

\[(\varinjlim M_\alpha,\varinjlim M_\alpha\setminus I\varinjlim M_\alpha,\emptyset)\]

\noindent which easily followa from the equations

\[I\bigcup_{\alpha\in A}\varphi_{\alpha,\infty}(M_\alpha)=\bigcup_{\alpha\in A}I\varphi_{\alpha,\infty}(M_\alpha)\]

\noindent From this and from (6.3) it follows that $\Gamma_I(\varinjlim M_\alpha)$ has at most $n$ connected components.
\end{proof}

\

\noindent\textit{\textbf{Proof of 6.1}} Let $\left\{M_\alpha,\varphi_{\alpha,\beta}:\alpha,\beta\in A\right\}$ be an $I$-pure direct system in $R$-Mod satisfying conditions 1,2, and 3 of (6.1). The proof of conclusion 1 in (6.1) is completely analogous to the proof of (4.3) substituting only the function $Gdim$ by $\rho$. We prove conclusion 2. From the fact that $\Omega$ is a complete class it follows that $\varinjlim M_\alpha\in\mathfrak{D}(I)$. From this, from condition 2 and from (6.3) it follows that $cdim_I\varinjlim M_\alpha\leq \kappa$. From this and from (2.3) it follows that $\varinjlim M_\alpha$ admits finite indecomposable direct sum decompositions, and that every direct sum decomposition of $\varinjlim M_\alpha$ has cardinality at most $\kappa$. Let $\varinjlim M_\alpha=\bigoplus_{i=1}^nN_i$ be a finite indecomposable direct sum decomposition of $\varinjlim M_\alpha$. From condition 3 it follows that $\rho(\varinjlim M_\alpha)=\infty$. The left hand side of this identity is at most $\sum_{i=1}^n\rho(N_i)$. It follows that there exists $1\leq t\leq n$ such that $\rho(N_t)=\infty$. $N_t\in\Omega$ since $\Omega$ is complete. Thus $M=N_t$ is indecomposable, $M\in\Omega$, and $\rho(M)=\infty$. This concludes the proof. $\square$

\

\noindent We dedicate the rest of this section to present a version of (6.1) in which combinatorial dimensions are substituted by fundamental combinatorial dimensions. We first establish certain terminology.

We will say that a direct system $\left\{(M_\alpha,\Sigma_\alpha,\Sigma'_\alpha),\varphi_{\alpha,\beta}:\alpha,\beta\in A\right\}$ in $\mathcal{C}$ is fundamental with respect to an ideal $I$ if the following conditions hold

\begin{enumerate}
\item The system $\left\{M_\alpha,\varphi_{\alpha,\beta}:\alpha,\beta\in A\right\}$ is $I$-pure
\item $\Sigma_\alpha=M_\alpha\setminus IM_\alpha$ for all $\alpha\in A$.
\item $\Sigma'_\alpha$ is fundamental in $M_\alpha$, with respect to $I$, for all $\alpha\in A$.
\item For each $\varphi_{\alpha,\infty}(a)\in\bigcup_{\alpha\in A}\varphi_{\alpha,\infty}(M_\alpha)$ there exist $\beta\in A$ and $u\in M_\beta$ such that, for each $\gamma\in A$ with $\gamma\geq \alpha,\beta$, we have that

\[\varphi_{\beta,\gamma}(u)\notin\sum_{\left|\Gamma_I(M_\gamma)\right|\setminus C_{\varphi_{\alpha,\gamma}(a)}^I}Rb\]
\end{enumerate}

\noindent We regard the following lemma as a version, for fundamental combinatorial dimensions, of (6.2).

\begin{lem}
Let $I$ be an ideal. Let $\left\{(M_\alpha,\Sigma_\alpha,\Sigma'_\alpha),\varphi_{\alpha,\beta}:\alpha,\beta\in A\right\}$ be a direct system in $\mathcal{C}$. If the system $\left\{(M_\alpha,\Sigma_\alpha,\Sigma'_\alpha),\varphi_{\alpha,\beta}:\alpha,\beta\in A\right\}$ is fundamental with respect to $I$, then 

\[\varinjlim M_\alpha\in\mathfrak{F}(I)\]

\noindent Moreover, in this case, the set $\bigcup_{\alpha\in A}\varphi_\alpha^\infty(\Sigma'_\alpha)$ is fundamental in $\varinjlim M_\alpha$ with respect to $I$.
\end{lem}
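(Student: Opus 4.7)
The plan is to verify the three ingredients needed to conclude that $\varinjlim M_\alpha \in \mathfrak{F}(I)$ and that $\bigcup_{\alpha\in A}\varphi_{\alpha,\infty}(\Sigma'_\alpha)$ witnesses this membership: first, that $\varinjlim M_\alpha$ lies in $\mathfrak{D}(I)$; second, that the distinguished set generates $\varinjlim M_\alpha$; and third, that for each $a\in\Sigma'_\alpha$ the fundamental inequality is satisfied at the vertex $R\varphi_{\alpha,\infty}(a)$ of $\Gamma_I(\varinjlim M_\alpha)$.

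The first two ingredients are essentially bookkeeping. Because each $\Sigma'_\alpha$ is fundamental in $M_\alpha$ with respect to $I$, each $M_\alpha$ lies in $\mathfrak{D}(I)$, and the $I$-purity hypothesis forces every transition $\varphi_{\alpha,\beta}$ to be a monomorphism; hence (6.2) yields $\varinjlim M_\alpha \in \mathfrak{D}(I)$. Since each $\Sigma'_\alpha$ already generates $M_\alpha$ and the images $\varphi_{\alpha,\infty}(M_\alpha)$ together generate the direct limit, the union $\bigcup_{\alpha}\varphi_{\alpha,\infty}(\Sigma'_\alpha)$ generates $\varinjlim M_\alpha$.

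The substantive step is the third. Fix $a\in\Sigma'_\alpha$ and let $\beta\in A$, $u\in M_\beta$ be as provided by condition 4. I will argue that
\[\varphi_{\beta,\infty}(u)\notin \sum_{|\Gamma_I(\varinjlim M_\alpha)|\setminus C^I_{\varphi_{\alpha,\infty}(a)}} Rb.\]
Assume for contradiction that $\varphi_{\beta,\infty}(u)=\sum_{i=1}^n r_i\,\varphi_{\delta_i,\infty}(b_i)$ with each $R\varphi_{\delta_i,\infty}(b_i)$ a vertex of $\Gamma_I(\varinjlim M_\alpha)$ lying outside $C^I_{\varphi_{\alpha,\infty}(a)}$. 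Choose $\gamma\in A$ with $\gamma\geq \alpha,\beta,\delta_1,\dots,\delta_n$. Since $\varphi_{\gamma,\infty}$ is injective, the identity descends to
\[\varphi_{\beta,\gamma}(u)=\sum_{i=1}^n r_i\,\varphi_{\delta_i,\gamma}(b_i)\]
in $M_\gamma$. Using $I$-purity, each $b_i\notin IM_{\delta_i}$ forces $\varphi_{\delta_i,\gamma}(b_i)\notin IM_\gamma$, so $R\varphi_{\delta_i,\gamma}(b_i)$ is a genuine vertex of $\Gamma_I(M_\gamma)$. The functoriality results of Section 5 guarantee that $\Gamma_I(\varphi_{\gamma,\infty})$ is a morphism in $\mathcal{G}$ and therefore preserves paths; hence if any $R\varphi_{\delta_i,\gamma}(b_i)$ lay in $C^I_{\varphi_{\alpha,\gamma}(a)}$, its image $R\varphi_{\delta_i,\infty}(b_i)$ would lie in $C^I_{\varphi_{\alpha,\infty}(a)}$, contradicting the choice of representation. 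Thus every $R\varphi_{\delta_i,\gamma}(b_i)$ sits outside $C^I_{\varphi_{\alpha,\gamma}(a)}$, placing $\varphi_{\beta,\gamma}(u)$ in $\sum_{|\Gamma_I(M_\gamma)|\setminus C^I_{\varphi_{\alpha,\gamma}(a)}}Rb$, which contradicts condition 4.

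The main obstacle is the bookkeeping in this third step: one must ensure that a finite representation in the direct limit descends to an equality at some common finite index, and that non-membership in a connected component transports faithfully from the limit back to a finite level. Both points rely on $I$-purity (so that cyclic submodules at level $\gamma$ survive as vertices, rather than being absorbed into $IM_\gamma$) and on the functoriality established in Section 5, which forwards paths along $\Gamma_I(\varphi_{\gamma,\infty})$ and thereby lets us pull apart connectivity information in $\varinjlim M_\alpha$ in terms of the finite approximants $M_\gamma$.
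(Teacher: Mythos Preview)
Your argument is correct and follows essentially the same route as the paper's: both proofs handle membership in $\mathfrak{D}(I)$ via (6.2), generation by passing the generating sets $\Sigma'_\alpha$ through the structure maps, and the fundamental inequality by invoking condition~4 together with the fact that connectivity in $\Gamma_I(\varinjlim M_\alpha)$ descends to finite stages. The paper compresses the third step into the single assertion that the inequality follows from condition~4 and the identity $C_{\varphi_{\alpha,\infty}(a)}^I=\Gamma_I(\varphi_{\alpha,\infty})(C_a^I)$ (derived from the direct-limit preservation results of Section~5), whereas you unpack this explicitly as a finite-descent contradiction argument; your version is more transparent but not substantively different.
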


\begin{proof}
Let $\left\{(M_\alpha,\Sigma_\alpha,\Sigma'_\alpha),\varphi_{\alpha,\beta}:\alpha,\beta\in A\right\}$ be a direct system in $\mathcal{C}$, fundamental with respect to $I$. From the fact that $\varphi_{\alpha,\beta}$ is a monomorphism for all $\alpha,\beta\in A$ such that $\alpha\leq \beta$, from (6.2) and from the fact that $M_\alpha\in\mathfrak{F}(I)$ for all $\alpha\in A$ it follows that $\varinjlim M_\alpha\in\mathfrak{D}(I)$. We prove now that $\bigcup_{\alpha\in A}\varphi_{\alpha,\infty}(\Sigma'_\alpha)$ is fundamental in $\varinjlim M_\alpha$ with respect to $I$. From the fact that $\Sigma'_\alpha$ generates $M_\alpha$ for all $\alpha\in A$ and from the equation

\[\varinjlim M_\alpha=\bigcup_{\alpha\in A}\varphi_{\alpha,\infty}(M_\alpha)\]

\noindent it follows that $\bigcup_{\alpha\in A}\varphi_{\alpha,\infty}(\Sigma'_\alpha)$ generates $\varinjlim M_\alpha$. Thus, it is enough to prove, for each $\varphi_{\alpha,\infty}(a)\in\bigcup_{\alpha\in A}\varphi_{\alpha,\infty}(M_\alpha)$, that

\[\sum_{\Gamma_I(\varinjlim M_\alpha)\setminus C_{\varphi_{\alpha,\infty}(a)}^I}Rb\neq \varinjlim M_\alpha\]

\noindent This follows from the fact that $\left\{(M_\alpha,\Sigma_\alpha,\Sigma'_\alpha),\varphi_{\alpha,\beta}:\alpha,\beta\in A\right\}$ is fundamental with respect to $I$, together with equations

\[C_{\varphi_{\alpha,\infty}(a)}^I=\Gamma_I(\varphi_{\alpha,\infty})(C_a^I)\]

\noindent which easily follows from (5.5). This concludes the proof.
\end{proof}

\noindent We again regard the following lemma as a version, for fundamental combinatorial dimensions, of (6.3).

\begin{lem}
Let $I$ be an ideal. Let $n\geq 1$. Let $\left\{M_\alpha,\varphi_{\alpha,\beta}:\alpha,\beta\in A\right\}$ be a direct system in $R$-Mod such that 

\[\varphi_{\alpha,\beta}(M_\alpha\setminus IM_\alpha)\subseteq M_\beta\setminus IM_\beta\]

\noindent for all $\alpha,\beta\in A$ with $\alpha\leq\beta$. Suppose there exists a collection of sets $\left\{\Sigma_\alpha :\alpha\in A\right\}$ satisfying the following conditions.

\begin{enumerate}
\item $\Sigma_\alpha\subseteq M_\alpha\setminus IM_\alpha$ for all $\alpha\in A$.
\item $\varphi_{\alpha,\beta}(\Sigma_\alpha)\subseteq \Sigma_\beta$ for all $\alpha,\beta\in A$ such that $\alpha\leq\beta$.
\item The system $\left\{(M_\alpha,M_\alpha\setminus IM_\alpha, \Sigma_\alpha),\varphi_{\alpha,\beta}:\alpha,\beta\in A\right\}$ is fundamental with respect to $I$
\end{enumerate}

\noindent If $fcdim_IM_\alpha\leq n$ for all $\alpha\in A$, then $fcdim_I\varinjlim M_\alpha\leq n$.
\end{lem}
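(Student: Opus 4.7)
The plan is to reduce this statement to a finite-stage counting argument, using (6.5) to exhibit an explicit fundamental subset of the direct limit, and then mimicking the proof of (6.3) with attention restricted to the fundamental subset. First I would note that the hypothesis on the morphisms, together with conditions 1 and 2, guarantees that $\{(M_\alpha, M_\alpha\setminus IM_\alpha,\Sigma_\alpha),\varphi_{\alpha,\beta}:\alpha,\beta\in A\}$ is a direct system in $\mathcal{C}$, and condition 3 asserts that this system is fundamental with respect to $I$. Applying (6.5), I conclude that $\varinjlim M_\alpha\in \mathfrak{F}(I)$ and that $\Sigma := \bigcup_{\alpha\in A}\varphi_{\alpha,\infty}(\Sigma_\alpha)$ is a fundamental subset of $\varinjlim M_\alpha$ with respect to $I$. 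Consequently $fcdim_I \varinjlim M_\alpha$ equals the number of connected components of $\Gamma_I(\varinjlim M_\alpha)$ of the form $C_{\varphi_{\alpha,\infty}(a)}^I$ with $\alpha\in A$ and $a\in \Sigma_\alpha$.

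Next, I would argue by contradiction. Suppose this number exceeds $n$. Choose representatives $\varphi_{\alpha_i,\infty}(a_i)$, $i=1,\ldots,n+1$, with $a_i\in\Sigma_{\alpha_i}$, whose associated connected components in $\Gamma_I(\varinjlim M_\alpha)$ are pairwise distinct. By directedness of $A$, pick $\gamma\in A$ with $\gamma\geq \alpha_i$ for every $i$, and set $b_i=\varphi_{\alpha_i,\gamma}(a_i)$. Condition 2 guarantees $b_i\in\Sigma_\gamma$ for each $i$, so each $Rb_i$ is a vertex of $\Gamma_I(M_\gamma)$ generated by an element of the fundamental subset $\Sigma_\gamma$.

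The heart of the argument is then to verify that $Rb_1,\ldots,Rb_{n+1}$ lie in pairwise distinct connected components of $\Gamma_I(M_\gamma)$. If some $Rb_i$ and $Rb_j$ with $i\neq j$ were joined by a path in $\Gamma_I(M_\gamma)$, applying the graph morphism $\Gamma_I(\varphi_{\gamma,\infty})$ constructed in section 5 would produce a path in $\Gamma_I(\varinjlim M_\alpha)$ between $R\varphi_{\gamma,\infty}(b_i)=R\varphi_{\alpha_i,\infty}(a_i)$ and $R\varphi_{\gamma,\infty}(b_j)=R\varphi_{\alpha_j,\infty}(a_j)$, contradicting the choice of representatives. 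Hence $\Gamma_I(M_\gamma)$ has at least $n+1$ distinct connected components containing elements of $\Sigma_\gamma$, which forces $fcdim_I M_\gamma\geq n+1$, contradicting the hypothesis. This establishes $fcdim_I \varinjlim M_\alpha\leq n$.

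The main obstacle is less conceptual than bookkeeping: one must carefully invoke (6.5) to ensure that $\Sigma$ really is fundamental (the nontrivial condition 4 in the definition of a fundamental direct system is what powers this), and one must use the functoriality of $\Gamma_I$ from section 5 (specifically the fact that $\Gamma_I(\varphi_{\gamma,\infty})$ is a graph morphism and therefore sends paths to paths) in order to transfer non-connectedness at infinity back to non-connectedness at the finite stage $\gamma$. Once these are in place, the conclusion is a direct analogue of the component-counting argument of (6.3), restricted to components distinguished by the fundamental subsets.
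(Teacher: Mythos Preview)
Your proposal is correct and follows essentially the same approach as the paper: both invoke (6.5) to obtain that $\bigcup_{\alpha\in A}\varphi_{\alpha,\infty}(\Sigma_\alpha)$ is fundamental in $\varinjlim M_\alpha$, and both then bound the number of fundamental components by pushing the question back to a finite stage via the functoriality of $\Gamma_I$. The only cosmetic difference is packaging: the paper forms the auxiliary direct system $\{\Gamma_\alpha\}$ in $G$ (where $\Gamma_\alpha$ is the subgraph of $\Gamma_I(M_\alpha)$ generated by $\bigcup_{a\in\Sigma_\alpha}C_a^I$) and applies (6.3) as a black box, whereas you inline that argument directly as a contradiction at a single index $\gamma$.
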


\begin{proof}
Let $\left\{M_\alpha,\varphi_{\alpha,\beta}:\alpha,\beta\in A\right\}$ be a direct system in $R$-Mod such that 

\[\varphi_{\alpha,\beta}(M_\alpha\setminus IM_\alpha)\subseteq M_\beta\setminus IM_\beta\]

\noindent for all $\alpha,\beta\in A$ with $\alpha\leq\beta$. Let $\left\{\Sigma_\alpha:\alpha\in A\right\}$ be a collection of sets satisfying conditions 1, 2, and 3 above. Suppose $fcdim_IM_\alpha\leq n$ for all $\alpha\in A$. From (6.5) it follows that 

\[\varinjlim M_\alpha\in\mathfrak{F}(I)\]

\noindent and that $\bigcup_{\alpha\in A}\varphi_\alpha^\infty(\Sigma_\alpha)$ is fundamental in $\varinjlim M_\alpha$ with respect to $I$. Now, for each $\alpha\in A$, let $\Gamma_\alpha$ be the subgraph of $\Gamma_I(M_\alpha)$ generated by $\bigcup_{a\in\Sigma_\alpha}C_a^I$. Since $\Gamma_I(\varphi_{\alpha,\infty})$ is a morphism in $G$ and $\varphi_{\alpha,\beta}(\Sigma_\alpha)\subseteq \Sigma_\beta$ for all $\alpha,\beta\in A$ such that $\alpha\leq\beta$, it follows that $\left\{\Gamma_\alpha,\Gamma_I(\varphi_{\alpha,\beta}):\alpha,\beta\in A\right\}$ is a direct system in $G$. Now, since for each $\alpha\in A$, $fcdim_IM_\alpha$ is the number of connected components of $\Gamma_\alpha$, it follows, by (6.3), that $\varinjlim \Gamma_\alpha$ has at most $n$ connected components. The result follows from this and from the fact that $\varinjlim \Gamma_\alpha$ is the subgraph of $\Gamma_I(\varinjlim M_\alpha)$ generated by $\bigcup_{\alpha\in A}\varphi_{\alpha,\infty}(\Sigma_\alpha)$. 
\end{proof}

\noindent The next result is a version of (6.1) in which combinatorial dimensions are substituted by fundamental combinatorial dimensions.

\begin{thm}
Let $\Omega$ be a class of modules. Let $\rho$ be a rank function with domain $\Omega$. If $\Omega$ is complete, and there exist, an ideal $I$, and a countable $I$-pure direct system $\left\{M_\alpha,\varphi_{\alpha,\beta}:\alpha,\beta\in A\right\}$ in $R$-Mod, such that
\begin{enumerate}
\item $\left\{M_\alpha,\varphi_{\alpha,\beta}:\alpha,\beta\in A\right\}$ satisfies the conditions of (6.6).
\item There exists a sequence $\alpha_1,\alpha_2,...$ in $A$ such that $\rho(M_{\alpha_i})<\rho(M_{\alpha_{i+1}})$ for all $i\geq 1$ 
\item There exists $\phi\geq 1$ such that $fcdim_IM_\alpha\leq \phi$ for all $\alpha\in A$
\end{enumerate}
Then the following can be concluded
\begin{enumerate}
\item For each $n\geq 1$ there exists an indecomposable $M\in\Omega$ such that $\rho(M)\geq n$.
\item There exists an indecomposable $M\in\Omega$ such that $\rho(M)=\infty$
\end{enumerate}
\end{thm}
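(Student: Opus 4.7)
The plan is to mirror the proof of (6.1) verbatim, substituting the combinatorial-dimension machinery by its fundamental counterpart. Fix an ideal $I$ and an $I$-pure countable direct system $\{M_\alpha,\varphi_{\alpha,\beta}:\alpha,\beta\in A\}$ satisfying (1)--(3). Form the direct limit $M=\varinjlim M_\alpha$. Since $\Omega$ is complete and the system is countable, $M\in\Omega$.

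First I would verify that the hypotheses of (6.6) are in force. By (1), there is a collection $\{\Sigma_\alpha:\alpha\in A\}$ making the system $\{(M_\alpha,M_\alpha\setminus IM_\alpha,\Sigma_\alpha),\varphi_{\alpha,\beta}\}$ fundamental with respect to $I$; in particular $M_\alpha\in\mathfrak{F}(I)$ for every $\alpha$. Condition (3) gives $fcdim_IM_\alpha\leq\phi$ for all $\alpha\in A$. Applying (6.6) yields
\[
M\in\mathfrak{F}(I)\quad\text{and}\quad fcdim_IM\leq\phi.
\]
From (2.13) we then obtain $KS\ell(M)\leq fcdim_IM\leq\phi$, and by (2.14) the module $M$ admits a finite indecomposable direct sum decomposition $M=\bigoplus_{i=1}^rN_i$ with $r\leq\phi$. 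Each $N_i$ is a direct summand of $M$, hence lies in the complete class $\Omega$.

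For conclusion (1), given $n\geq 1$, use (2) to choose $\alpha\in A$ with $\rho(M_\alpha)\geq n\phi$; replacing the index set $A$ by the cofinal subset $\{\beta\in A:\beta\geq\alpha\}$ if necessary, the same argument applies to $M_\alpha$ in place of $M$, so $M_\alpha$ splits into at most $\phi$ indecomposable summands and by subadditivity of $\rho$ at least one of these summands $N$ satisfies $\rho(N)\geq n$. For conclusion (2), condition (2) together with the monotonicity axiom $\rho(M_\alpha)\leq\rho(M)$ forces $\rho(M)=\infty$; since $\rho(M)\leq\sum_{i=1}^r\rho(N_i)$, there exists an index $t$ with $\rho(N_t)=\infty$, and $N_t\in\Omega$ is the required indecomposable module.

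The only non-routine step is the invocation of (6.6), which already packages the delicate interaction between $I$-purity, the fundamentality condition (4) of the preceding definition, and the computation of connected components under direct limits; once that lemma is available, everything else is a transcription of the proof of (6.1). Hence the main obstacle was absorbed into the preparatory lemma (6.6), and the present proof is essentially bookkeeping.
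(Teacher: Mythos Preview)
Your proof is correct and follows essentially the same approach as the paper, which simply states that the argument is ``completely analogous to the proof of (6.1) substituting only (6.3), and (6.4) for (6.5), and (6.6)''; you have spelled out exactly that substitution. Two minor remarks: the inequality $KS\ell(M)\leq fcdim_IM$ is Theorem~(2.11), not (2.13), and the existence of a finite indecomposable decomposition is Corollary~(2.13), not (2.14); also, the clause about passing to a cofinal subset in your treatment of conclusion~(1) is superfluous, since hypothesis~(3) already gives $fcdim_IM_\alpha\leq\phi$ directly for the chosen $\alpha$.
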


\begin{proof}
The proof is completely analogous to the proof of (6.1) subsituting only (6.3), and (6.4) for (6.5), and (6.6). $\square$
\end{proof}

\noindent Observe that direct systems defined in section 4 and in the remarks preceeding (6.1) satisfy condition 1 of (6.6). Moreover, if we assume that infinite sequences $s$ and $t$ satisfy conjecture 4.2 with respect to finite dimensional $k$-vector space $V$ and integer $\phi_V$, then the system $\left\{M_{t_n},\varphi_{n,m}:n,m\geq 1\right\}$ satisfies conditions 2 and 3 of (6.7). We do not assume however the system $\left\{M_{t_n},\varphi_{n,m}:n,m\geq 1\right\}$ to satisfy conditions 2 and 3 of (6.6) with respect to any collection $\left\{\Sigma_{t_n}:n\geq 1\right\}$. We conjecture the following.

\begin{conj}
Let $k$ be a field. Let $V$ be a finite dimensional $k$-vector space. If $dim_kV$ is sufficently large, then there exists an integer $\phi_V$, an infinite admissible sequence $s$, an infinite subsequence $t$ of $s$ and a sequence of sets $\left\{\Sigma_{t_n}:n\geq 1\right\}$ such that
\begin{enumerate}
\item $M_{t_n}\in\mathfrak{F}(Soc(k\ltimes V))$ for all $n\geq 1$.
\item $fcdimM_{t_n}\leq \phi_V$ for all $n$
\item The system $\left\{M_{t_n},\varphi_{n,m}:n,m\geq 1\right\}$ satisfies conditions 2 and 3 of (6.6) with respect to collection $\left\{\Sigma_{t_n}:n\geq 1\right\}$
\end{enumerate}
\end{conj}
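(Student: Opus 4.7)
The plan is to extend the explicit constructions of Section 3 and the recursive framework of Section 4 into the infinite setting, by identifying an admissible sequence $s$ whose truncations admit uniformly bounded fundamental combinatorial dimension, together with compatible fundamental subsets chosen coherently along the direct system. First I would revisit the recursive description of $\tilde{\Sigma}_s$ and the associated linear systems of the form $va=vb$ used at the end of Section 3, with the goal of cataloguing, for a few inductive steps, how adjoining the generator $\alpha_{s\cup\{i_{m+1}\}}$ modifies the connected-component structure of $\Gamma_{Soc(k\ltimes V)}(M_s)$. A natural candidate is to restrict to sequences with $i_j$ chosen in a strongly symmetric way (for instance, $i_j$ eventually constant with value $c>\lfloor\dim_kV/2\rfloor$), since under such a restriction the recursion for $\tilde{\Sigma}_{s\cup\{i_{m+1}\}}$ has enough symmetry to compare components at consecutive levels inductively.

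Once an admissible sequence $s$ and subsequence $t$ have been fixed so that $fcdim M_{t_n}\leq\phi_V$, the second step is to build the sets $\Sigma_{t_n}$ compatibly. Using the recursive formula $\Sigma_{s\cup\{i_{m+1}\}}=\Sigma_s\times\{0_{i_{m+1}}\}\cup\{\alpha_{s\cup\{i_{m+1}\}}\}$, the embeddings $\varphi_{n,m}$ automatically send the obvious candidate $\Sigma_{t_n}$ into $\Sigma_{t_m}$, yielding condition 2 of \mbox{(6.6)}. Fundamentality of each $\Sigma_{t_n}$ in $M_{t_n}$ with respect to $Soc(k\ltimes V)$ should follow by the same socle-dimension counting argument that controls the Goldie dimension bounds at the end of the proof of (3.1): the submodule generated by the complement of a component $C^I_a$ has socle dimension strictly smaller than $\dim_kSoc(M_{t_n})$, so cannot equal $M_{t_n}$. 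Together with (3.3) and (3.2), this gives condition 1 of the conjecture and half of condition 3.

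The decisive step is condition 4 of fundamentality of the direct system in $\mathcal{C}$, namely the existence, for each $\varphi_{\alpha,\infty}(a)$, of a witness $u\in M_\beta$ that remains outside $\sum_{|\Gamma_I(M_\gamma)|\setminus C^I_{\varphi_{\alpha,\gamma}(a)}}Rb$ for \emph{all} sufficiently large $\gamma$. By (5.5) and the remarks preceding (6.6), the components $C^I_{\varphi_{\alpha,\gamma}(a)}$ are images under $\Gamma_I(\varphi_{\alpha,\gamma})$ of $C^I_a$, and hence remain in bijection with components of the limit; thus the problem reduces to producing, at some finite stage, an element of $M_\beta$ whose socle lies off the socle of the complementary components \emph{and whose image stays off this complementary socle at every later stage}. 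This is precisely where the socle-dimension monotonicity of an $I$-pure chain, combined with the uniform bound on $fcdim$, is meant to supply the witness: the complementary submodule has socle of bounded dimension uniformly in $\gamma$, while $\dim_kSoc(M_{t_n})\to\infty$ by condition 2, so from some stage onward a generator of a new component of $\Gamma_I(M_{t_\beta})$ does the job.

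The hard part will be Step 1. The recursive vertex set $\tilde{\Sigma}_s$ grows exponentially in $|s|$, and the proof of (3.1) controls $fcdim$ by hand only for $|s|\leq 2$; extending this control to arbitrary $|s|$ requires a structural lemma describing how the addition of $\alpha_{s\cup\{i_{m+1}\}}$ interacts, via the linear systems of Section 3, with \emph{every} pre-existing component of $\Gamma_{Soc(k\ltimes V)}(M_s)$. Absent a closed-form description, one fall-back is a non-constructive Ramsey-type pigeonhole on the finite list of "local" adjacency patterns that can appear around a new vertex, extracting an infinite subsequence $t$ along which the component structure stabilizes up to isomorphism of a neighbourhood; this stabilization would then force the uniform bound $\phi_V$ required by the conjecture.
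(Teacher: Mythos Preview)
The statement you are attempting to prove is Conjecture~6.8, and the paper offers \emph{no} proof of it: it is explicitly stated as an open conjecture, preceded by the remark that the author does not assume the system $\{M_{t_n},\varphi_{n,m}\}$ satisfies conditions~2 and~3 of (6.6), and followed only by the observation that a positive answer would, via (6.7), yield indecomposable $k\ltimes V$-modules of infinite Goldie dimension. There is therefore no ``paper's own proof'' to compare your proposal against; what you have written is a research program for attacking an open problem, not a verification of a known argument.

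As a program your outline is coherent and correctly identifies the architecture one would need: conditions~1 and~2 of (6.6) for the recursively defined $\Sigma_s$ are indeed automatic from the recursion, and the socle-dimension count does give fundamentality at each finite stage as in the proof of (3.1). But the proposal is not a proof, and you yourself locate the gap: Step~1, the uniform bound $fcdim\,M_{t_n}\le\phi_V$ for all $n$, is never established. The hand computations of (3.1) cover only $|s|\le 2$; your suggested ``structural lemma'' describing how a new generator $\alpha_{s\cup\{i_{m+1}\}}$ interacts with all existing components is exactly what is missing, and the Ramsey-type fallback is not an argument as stated, since the set of local adjacency patterns around a new vertex is indexed by subsets of $\tilde{\Sigma}_s$ and hence grows with $|s|$, so there is no fixed finite palette to pigeonhole on. Your treatment of condition~4 of fundamentality of the direct system is also only heuristic: the claim that ``the complementary submodule has socle of bounded dimension uniformly in $\gamma$'' presupposes precisely the uniform control on components that Step~1 was supposed to supply. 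Until Step~1 is resolved, the remaining steps cannot be completed.
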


\noindent Observe that (6.7), together with a positive answer to (6.8), implies, for finite dimensional $k$-vector space $V$ with $dim_kV$ is sufficently large, the existence of indecomposable $k\ltimes V$-modules of arbitrarily large and infinite Goldie dimension.

\section{Two final computations}

\noindent In this final section we present two approximation results on the computation of graphs of cyclic modules. The results presented in this section can be regarded as being of the same type as (5.3), (5.4), and (5.5). (5.3), (5.4), and (5.5) can be regarded as approximation results in the calculus of graphs of cyclic modules in the following way: (5.3) says that if a pair $(M,\Sigma)$ can be approximated as the union $(\bigcup_{\alpha\in A}M_\alpha,\bigcup_{\alpha\in A}\Sigma_\alpha)$ of a directed collection of pairs $\left\{(M_\alpha,\Sigma_\alpha):\alpha\in A\right\}$, then the computation of the $I$-graph of cyclic modules, $\Gamma_I(M,\Sigma)$, of $(M,\Sigma)$ with respect to any ideal $I$ can be reduced to the computation of each of the $I$-graphs of cyclic modules, $\Gamma_I(M_\alpha,\Sigma_\alpha)$, of $(M_\alpha,\Sigma_\alpha)$, $\alpha\in A$, together with the computation of their union. Moreover, (5.4) and (5.5) say that if an object $(M,\Sigma,\Sigma')$ in $\mathcal{C}$ can be approximated as the direct limit of a direct system $\left\{(M_\alpha,\Sigma_\alpha,\Sigma'_\alpha),\varphi_{\alpha,\beta}:\alpha,\beta\in A\right\}$ in $\mathcal{C}$, or as the coproduct of the non-empty collection $\left\{(M_\alpha,\Sigma_\alpha,\Sigma'_\alpha):\alpha\in A\right\}$ of objects in $\mathcal{C}$ respectively, then the computation of the $I$-graph of cyclic modules, $\Gamma_I(M,\Sigma,\Sigma')$ of $(M,\Sigma,\Sigma')$ with respect to any ideal $I$, can be reduced to the computation of each of the $I$-graphs of cyclic modules $\Gamma_I(M_\alpha,\Sigma_\alpha,\Sigma'_\alpha)$, together with the computation of their direct limit in the first case and together with the computation of their coproduct in the second case. Observe that the first of these two cases implies that the computation of the $I$-graph of cyclic modules, $\Gamma_I(M,\Sigma,\Sigma')$, of any object $(M,\Sigma,\Sigma')$ can be reduced to the computation of $I$-graphs of cyclic modules of objects of the form $(M_\alpha,\Sigma_\alpha,\Sigma'_\alpha)$, where $M_\alpha$ is finitely generated and $\Sigma_\alpha$ and $\Sigma'_\alpha$ are finite for all $\alpha\in A$. 

We regard the following lemma as a version of (5.3) or (5.4) in which approximations are performed on ideals rather than on objects in $\mathcal{C}$.

\begin{lem}
Let $(M,\Sigma,\Sigma')$ be an object in $\mathcal{C}$. Let $\left\{I_\alpha:\alpha\in A\right\}$ be a collection of ideals. If the set $\left\{I_\alpha:\alpha\in A\right\}$ is directed with respect to the order given by inclusion, then

\[\Gamma_{\sum_{\alpha\in A}I_\alpha}(M,\Sigma,\Sigma')=\Gamma_{\bigcup_{\alpha\in A}I_\alpha}(M,\Sigma,\Sigma')=\bigcup_{\alpha\in A}\Gamma_{I_\alpha}(M,\Sigma,\Sigma')\]
\end{lem}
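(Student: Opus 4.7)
The plan is to handle the two equalities separately, and for the second equality, to verify equality of the underlying graphs and of the distinguished vertex subsets. All graphs in sight will have the same vertex set (the set of cyclic submodules of $M$ generated by elements of $\Sigma$), since the definition of $|\Gamma_I(M,\Sigma)|$ does not involve $I$.

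First I would establish the identity $\sum_{\alpha\in A}I_\alpha=\bigcup_{\alpha\in A}I_\alpha$. The containment $\supseteq$ is trivial. For $\subseteq$, a typical element of the sum is a finite linear combination $r=r_{\alpha_1}+\cdots+r_{\alpha_k}$ with $r_{\alpha_i}\in I_{\alpha_i}$; by directedness there is $\gamma\in A$ such that $I_{\alpha_i}\subseteq I_\gamma$ for all $i$, so $r\in I_\gamma\subseteq\bigcup_\alpha I_\alpha$. This reduces the problem to showing $\Gamma_{\bigcup_\alpha I_\alpha}(M,\Sigma,\Sigma')=\bigcup_{\alpha\in A}\Gamma_{I_\alpha}(M,\Sigma,\Sigma')$.

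Write $J=\bigcup_\alpha I_\alpha$. For the equality of adjacency relations, I would show $Ra$ and $Rb$ are adjacent in $\Gamma_J(M,\Sigma)$ if and only if they are adjacent in $\Gamma_{I_\alpha}(M,\Sigma)$ for some $\alpha\in A$. The direction ($\Leftarrow$) is immediate from $I_\alpha\subseteq J$. For ($\Rightarrow$), adjacency yields $r,s\in J$ with $0\neq ra=sb$; pick $\alpha_1,\alpha_2$ with $r\in I_{\alpha_1}$ and $s\in I_{\alpha_2}$, and use directedness to find $\gamma$ with $I_{\alpha_1},I_{\alpha_2}\subseteq I_\gamma$, so that $r,s\in I_\gamma$ and $Ra\sim Rb$ in $\Gamma_{I_\gamma}(M,\Sigma)$. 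This establishes the equality of the underlying graphs.

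For the distinguished subsets, the $\mathcal{G}$-objects on each side of the equality have $\Delta$-parts $\bigcup_{a\in\Sigma'}C_a^J$ and $\bigcup_{\alpha\in A}\bigcup_{a\in\Sigma'}C_a^{I_\alpha}$ respectively, so it suffices to prove $C_a^J=\bigcup_{\alpha\in A}C_a^{I_\alpha}$ for every $a\in\Sigma'$. The containment $\supseteq$ follows from the fact that $\Gamma_{I_\alpha}(M,\Sigma)$ is a subgraph of $\Gamma_J(M,\Sigma)$ for every $\alpha$. For $\subseteq$, any vertex $Rb\in C_a^J$ is connected to $Ra$ by a finite path in $\Gamma_J(M,\Sigma)$; by what was shown above each edge of this path is realised in some $\Gamma_{I_{\alpha_i}}(M,\Sigma)$, and a final application of directedness collects the finitely many indices $\alpha_i$ into a single $\gamma\in A$ for which the entire path lies in $\Gamma_{I_\gamma}(M,\Sigma)$, so that $Rb\in C_a^{I_\gamma}$. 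The main, and essentially only, technical point throughout is this repeated use of directedness to collapse a finite witness (an edge, or a finite path) into a single index $\gamma$; no other obstacle arises.
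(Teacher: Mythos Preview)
Your argument is correct and follows the same line as the paper's proof: both establish $\sum_\alpha I_\alpha=\bigcup_\alpha I_\alpha$ from directedness, note the common vertex set, and then use directedness to push a finite adjacency witness into a single $I_\gamma$. The paper's proof stops after verifying equality of adjacency relations, whereas you go further and check that the distinguished subsets $\bigcup_{a\in\Sigma'}C_a^I$ also agree; your path-collapsing argument for $C_a^J=\bigcup_\alpha C_a^{I_\alpha}$ is the natural one and fills a detail the paper leaves implicit.
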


\begin{proof}
Let $\left\{I_\alpha:\alpha\in A\right\}$ be a collection of ideals, directed with respect to the order given by inclusion. In this case the equality 

\[\bigcup_{\alpha\in A}I_\alpha=\sum_{\alpha\in A}I_\alpha\]

\noindent holds. The left part of the equality in the statement of the lemma follows. Now, clearly the identity

\[\left|\Gamma_{\sum_{\alpha\in A}I_\alpha}(M,\Sigma,\Sigma')\right|=\left|\bigcup_{\alpha\in A}\Gamma_{I_\alpha}(M,\Sigma,\Sigma')\right|\]

\noindent holds. Thus, we only need to prove that the corresponding adjacency relations are equal. The adjacency relation of $\bigcup_{\alpha\in A}\Gamma_{I_\alpha}(M,\Sigma,\Sigma')$ is clearly contained in the adjacency relation of $\Gamma_{\sum_{\alpha\in A}I_\alpha}(M,\Sigma,\Sigma')$. Now, let $a,b\in\Sigma$. Suppose $Ra$ and $Rb$ are adjacent in $\Gamma_{\sum_{\alpha\in A}I_\alpha}(M,\Sigma,\Sigma')$, then there exist $\alpha_1,...,\alpha_n$ such that 

\[(\sum_{i=1}^nI_i)a\cap(\sum_{i=1}^nI_i)b\neq\left\{0\right\}\]

\noindent Let $\beta\in A$ be such that $\alpha_i\leq\beta$ for all $1\leq i\leq n$. Then

\[I_\beta a\cap I_\beta b\neq\left\{0\right\}\]

\noindent that is, $Ra$ and $Rb$ are adjacent in $\bigcup_{\alpha\in A}\Gamma_{I_\alpha}(M,\Sigma,\Sigma')$. This concludes the proof.
\end{proof}

\noindent Observe that, as in (5.4), (7.1) implies that, the computation of the $I$-graph of cyclic modules, $\Gamma_I(M,\Sigma,\Sigma')$, of any object $(M,\Sigma,\Sigma')$ in $\mathcal{C}$, with respect to any ideal $I$, can be reduced to the computation of graphs of cycluc modules of the form $\Gamma_{I_\alpha}(M,\Sigma,\Sigma')$, $\alpha\in A$, where $I_\alpha$ is a finitely generated ideal contained in $I$ for every $\alpha\in A$, together with the computation of their union.

The next lemma can be regarded as a version, in the case where the base ring $R$ is a commutative domain and the module $M$ is torsion free, of (7.1), in which approximations are performed by products and intersections of finite collections of ideals with no assumptions on their order structure rather than by sums and unions of collections of ideals, directed with respect to inclusion. First recall that the intersection $\bigcap_{\alpha\in A}\Gamma_\alpha$, of a family of graphs $\left\{\Gamma_\alpha:\alpha\in A\right\}$ is defined as follows:

\begin{enumerate}
\item $\left|\bigcap_{\alpha\in A}\Gamma_\alpha\right|=\bigcap_{\alpha\in A}\left|\Gamma_\alpha\right|$
\item Given $a,b\in \bigcap_{\alpha\in A}\left|\Gamma_\alpha\right|$, we say that $a$ and $b$ are adjacent in $\bigcap_{\alpha\in A}\Gamma_\alpha$ if $a$ and $b$ are adjacent in $\Gamma_\alpha$ for all $\alpha\in A$.
\end{enumerate}

\noindent Observe that, for each $\alpha\in A$, the graph $\bigcap_{\alpha\in A}\Gamma_\alpha$ is contained, as a subgraph, in $\Gamma_\alpha$, and that $\bigcap_{\alpha\in A}\Gamma_\alpha$ is maximal with respect to this property.

\begin{lem}
Suppose the ring $R$ is a commutative domain. Let $(M,\Sigma,\Sigma')$ be an object in $\mathcal{C}$. Let $\left\{I_1,...,I_n\right\}$ be a finite set of ideals. If $M$ is torsion free, then

\[\Gamma_{\prod_{i=1}^nI_i}(M,\Sigma,\Sigma')=\Gamma_{\bigcap_{i=1}^n}(M,\Sigma,\Sigma')=\bigcap_{i=1}^n\Gamma_{I_i}(M,\Sigma,\Sigma')\]
\end{lem}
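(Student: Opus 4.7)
The plan is to separate the problem into a comparison of vertex sets and a comparison of adjacency relations. By Definition 2.1 the vertex set of $\Gamma_I(M,\Sigma,\Sigma')$ depends only on $\Sigma$ and not on the ideal $I$, so all three graphs in the statement have the same underlying set of vertices $\{Ra : a \in \Sigma\}$. Thus I only need to compare adjacency relations. For the easy direction I would use the chain of ideal containments $\prod_{j=1}^n I_j \subseteq \bigcap_{j=1}^n I_j \subseteq I_k$ for every $k$. If $Ra$ and $Rb$ are adjacent in $\Gamma_{\prod I_j}(M,\Sigma,\Sigma')$, i.e.\ $(\prod I_j) a \cap (\prod I_j) b \neq \{0\}$, then these containments give the analogous nonemptiness for $\bigcap I_j$ and for each $I_k$, hence adjacency in $\Gamma_{\bigcap I_j}(M,\Sigma,\Sigma')$ and in every $\Gamma_{I_k}(M,\Sigma,\Sigma')$, so in their intersection $\bigcap_{k=1}^n \Gamma_{I_k}(M,\Sigma,\Sigma')$. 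This direction uses neither torsion-freeness nor the domain hypothesis.

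The crux is the reverse implication: if $Ra$ and $Rb$ are adjacent in each $\Gamma_{I_j}(M,\Sigma,\Sigma')$, then they are adjacent in $\Gamma_{\prod I_j}(M,\Sigma,\Sigma')$. For each $j$ pick $r_j, s_j \in I_j$ with $r_j a = s_j b \neq 0$. Since this common value is nonzero in $M$ and $M$ is torsion-free over the commutative domain $R$, necessarily $r_j$ and $s_j$ are both nonzero in $R$ for every $j$. I then set
\[r := r_1 s_2 s_3 \cdots s_n, \qquad s := s_1 s_2 s_3 \cdots s_n.\]
Each of $r$ and $s$ is a single product with one factor from each $I_j$, so $r, s \in \prod_{j=1}^n I_j$. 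Using commutativity of $R$ together with $r_1 a = s_1 b$ I obtain $ra = (s_2 \cdots s_n)(r_1 a) = (s_2 \cdots s_n)(s_1 b) = sb$. Finally, $r$ is a product of nonzero elements of the domain $R$, hence nonzero; and $a \neq 0$ together with torsion-freeness of $M$ forces $ra \neq 0$. Thus $ra = sb$ is a nonzero element of $(\prod I_j) a \cap (\prod I_j) b$, giving adjacency in $\Gamma_{\prod I_j}(M,\Sigma,\Sigma')$ and closing the loop of equivalences.

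The main obstacle is identifying the right element of $\prod I_j$: it is tempting to try to combine the equations $r_j a = s_j b$ additively, but an additive combination only lands in $\sum I_j$ or $\bigcap I_j$, not in the product ideal, which is generated by single $n$-fold products. The multiplicative combination above is the correct one for precisely this reason. The torsion-free hypothesis is used exactly once, at the final step, to ensure $r \neq 0$ in $R$ implies $ra \neq 0$ in $M$; without it one could have $ra = 0$ even when $r$ is a nonzero product of nonzero factors, and the constructed element would fail to witness adjacency.
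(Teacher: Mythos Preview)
Your proof is correct and follows essentially the same strategy as the paper: both arguments reduce to the adjacency relation, use the containments $\prod_j I_j \subseteq \bigcap_j I_j \subseteq I_k$ for the easy implications, and for the nontrivial implication multiply together one witness from each $I_j$ to produce a nonzero element of the product ideal. Your version is in fact slightly more careful than the paper's: the paper writes $r_i a = r_i b$ with a single $r_i \in I_i$, whereas adjacency only provides $r_i a = s_i b$ for possibly distinct $r_i, s_i \in I_i$, and your asymmetric product $r = r_1 s_2 \cdots s_n$, $s = s_1 s_2 \cdots s_n$ handles this correctly.
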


\begin{proof}
Suppose $R$ is a commutative domain. Suppose also that $M$ is torsion-free. Again, it is easily seen that the identity

\[\left|\Gamma_{\prod_{i=1}^nI_i}(M,\Sigma,\Sigma')\right|=\left|\Gamma_{\bigcap_{i=1}^nI_i}(M,\Sigma,\Sigma')\right|=\left|\bigcap_{i=1}^n\Gamma_{I_i}(M,\Sigma,\Sigma')\right|\]

\noindent holds. Thus, again, we only need to prove that the corresponding adjacency relations are equal. Let $a,b\in\Sigma$. It is easily seen that if $Ra$ and $Rb$ are adjacent in $\Gamma_{\prod_{i=1}^nI_i}(M,\Sigma,\Sigma')$, then $Ra$ and $Rb$ are adjacent in $\Gamma_{\bigcap_{i=1}^nI_i}(M,\Sigma,\Sigma')$, and that if $Ra$ and $Rb$ are adjacent in $\Gamma_{\bigcap_{i=1}^nI_i}(M,\Sigma,\Sigma')$, then $Ra$ and $Rb$ are adjacent in $\bigcap_{i=1}^n\Gamma_{I_i}(M,\Sigma,\Sigma')$. Now, suppose $Ra$ and $Rb$ are adjacent in $\Gamma_{I_i}(M,\Sigma,\Sigma')$ for all $1\leq i\leq n$. Let $r_i\in I_i\setminus\left\{0\right\}$ be such that $r_ia=r_ib$ for all $1\leq i\leq n$. Let $r$ be equal to $\prod_{i=1}^nr_i$. Then $r\in\prod_{i=1}^nI_i\setminus\left\{0\right\}$ and $ra=rb$. We conclude that $Ra$ and $Rb$ are adjacent in $\Gamma_{\prod_{i=1}^nI_i}(M,\Sigma,\Sigma')$. This concludes the proof.  
\end{proof}

\noindent Thus, in the case where the base ring $R$ is a commutative domain and the ideal $I$ can be approximated as the intesection of a finite family of ideals $\left\{I_1,...,I_n\right\}$, then the computation of the $I$-graph of cyclic modules, $\Gamma_I(M,\Sigma,\Sigma')$, of any object $(M,\Sigma,\Sigma')$ in $\mathcal{C}$ such that $M$ is torsion-free can be reduced to the computation of each of the graphs $\Gamma_{I_i}(M,\Sigma,\Sigma')$, with $1\leq i\leq n$, together with the computation of their intersection. In particular, when the base ring $R$ is a Lasker domain (e.g. $R$ is noetherian), the computation of the $I$-graph of cyclic modules $\Gamma_I(M,\Sigma,\Sigma')$, of any object $(M,\Sigma,\Sigma')$ such that $M$ is torsion free, with respect to any ideal $I$, reduces to the computation of a finite number of graphs of cyclic modules of $(M,\Sigma,\Sigma')$ with respect to primary ideals, together with the computation of their intersection.

\section{Bibliography}

\noindent [1] J. Adamek, H. Herrlich, G. E. Strecker, \textit{Abstract and Concrete Categories. The Joy of Cats}, John Wiley and Sons, (1990).

\

\noindent [2] F. Anderson, K. Fuller, \textit{Rings and Categories of Modules}, Springer-Verlag, (1991).

\

\noindent [3] I. Assem, J. A. de la Peña, \textit{On the Tameness of Trivial Extension Algebras}, Fundamenta Mathematicae, \textbf{149}, (1996), 171-181.

\

\noindent [4] G. Azuaya, \textit{Corrections and Supplementaries to my Paper Concerning Remak-Krull-Schmidt's Theorem}, Nagoya Math. J. \textbf{1}, (1950), 117-124.

\

\noindent [5] B. Bollobas, \textit{Modern Graph Theory}, Springer (1998).

\

\noindent [6] J. A. Bundy, U. S. R Murty, \textit{Graph Theory With Applications}, North-Holland, (1976).

\

\noindent [7] J. A. Bundy, U. S. R Murty, \textit{Graph Theory}, Springer-Verlag, (2008).

\

\noindent [8] R. Diestel, \textit{Graph Theory, Third Edition}, Springer-Verlag, (2005).

\

\noindent [9] D. S. Dummit, R. M. Foote, \textit{Abstract Algebra}, John Wiley and Sons, (2004).

\

\noindent [10] A. Facchini, \textit{Module Theory, Endomorphism Rings and Direct Sum Decompositions in Some Classes of Modules}, Birkhäuser-Verlag, (1998). 

\

\noindent [11] A. Facchini, D. Herbera, \textit{Modules With Only Finitely Many Direct Sum Decompositions up to Isomorphisms}, Irish Math. Soc. Bull. \textbf{50} (2003), 51-69.

\

\noindent [12] R. M. Fossum, P. A. Griffith, I. Reiten, \textit{Trivial Extensions of Abelian Categories}, Lecture Notes in Math. \textbf{456}, Springer. 

\

\noindent [13] L. Fuchs, \textit{Infinite Abelian Groups} Vol. II, Elsevier (1973). 

\

\noindent [14] L.Fuchs, \textit{The Existence of Indecomposable Abelian Groups of Arbitrary Power}, Acta Math. Acad. Sci. Hungar. \textbf{10} (1959), 453-457.

\

\noindent [15] L. Fuchs, \textit{On a Directly Indecomposable Abelian Group of Power Greater than Continuum}, Acta. Math. Acad. Sci. Hungar. \textbf{8} (1957), 453-454.

\

\noindent [16] S. Kabbaj, N. Mahdou, \textit{Trivial Extensions of Local Rings and a Conjecture of Costa}, Lecture Notes in Pure and Applied Mathematics-Dekker, \textbf{231}, (2002), 301-311.

\

\noindent [17] Y. Kitamura, \textit{On Quotient Rings of Trivial Extension Algebras}, Proc. Amer. Math. Soc. \textbf{88}, No. 3, (1983), 391-396.

\

\noindent [18] S. MacLane, \textit{Categories for the Working Mathematician}, Springer Verlag, (1997).

\

\noindent [19] T. Y. Lam, \textit{Lectures on Modules and Rings}, Springer, (1998).

\

\noindent [20] L. H. \textit{Rowen Ring Theory, Student Edition}, Academic Press, (1991).

\

\noindent [21] P. Vamos, \textit{The Holy Grail of Algebra: Seeking Complete Sets of Invariants, Abelian Goups and Modules}, A. Facchini, C. Menini, eds. Math. and its Appl. 343, Kluwer Drordrecht, 1995, 475-483.

\

\noindent [22] R. Wisbauer, \textit{Foundations of Module and Ring Theory}, Gordon and Reach (1991).

\end{document}